\numberwithin{equation}{section}
\newtheorem{proposition}{Proposition}[section]
\newtheorem{theorem}{Theorem}[section]
\newtheorem{lemma}[proposition]{Lemma}
\newtheorem{claim}[proposition]{Claim}
\newtheorem{corollary}{Corollary}[section]
\theoremstyle{remark}
\begin{document}

\title[Well-posedness of the Stochastic KdV-Burgers Equation]{Local and Global Well-posedness of the Stochastic KdV-Burgers Equation}
\author{Geordie Richards}
\address{Department of Mathematics, University of Toronto, Toronto, Ontario, Canada}
\email{grichard@math.toronto.edu}
\subjclass[2000]{35Q53, 35R60, 60H15}
\keywords{well-posedness, stochastic PDEs, white noise invariance}
\thanks{This is part of the author's forthcoming Ph.D. thesis at the University of Toronto advised by J. Colliander and T. Oh.  The author also thanks J. Quastel for suggestions related to this work.}
\maketitle
\begin{abstract}

The stochastic PDE known as the Kardar-Parisi-Zhang equation (KPZ) has been proposed as a model for a randomly growing interface.  This equation can be reformulated as a stochastic Burgers equation.
We study a stochastic KdV-Burgers equation as a toy model for this stochastic Burgers equation.
Both of these equations formally preserve spatial white noise.  We are interested in rigorously proving the invariance of white noise for the stochastic KdV-Burgers equation.  This paper establishes a result in this direction.  After smoothing the additive noise (by less than one spatial derivative), we establish (almost sure) local well-posedness of the stochastic KdV-Burgers equation with white noise as initial data.  We also prove a global well-posedness result under an additional smoothing of the noise.
\end{abstract}

\tableofcontents

\section{Introduction}
\label{Sec:intro}
In this paper we study the stochastic Korteweg-de Vries (KdV) Burgers equation
\begin{align}
\left\{
\begin{array}{ll}du = (u_{xx} - u_{xxx} - (u^{2})_{x})dt + \phi\partial_{x} dW  , \ \
t\geq 0, x\in \mathbb{T}
\\
u\big|_{t=0} = u_{0},
\end{array} \right.
\label{Eqn:SDKDV}
\end{align}
where $\phi$ is a bounded operator on $L^{2}(\mathbb{T})$, and $W(t,x)$ is a cylindrical white noise of the form
\begin{align}
W(t,x) = \sum_{n\neq 0} B_{n}(t)e^{inx}.
\label{Eqn:STwhite}
\end{align}

\noindent Here $(B_{n}(t))_{n\in \mathbb{N}}$ is a family of standard complex-valued Brownian motions mutually
independent in a filtered probability space $(\Omega,\mathcal{F},(\mathcal{F}_{t})_{t\geq0},\mathbb{P})$,
and $B_{-n}=\overline{B_{n}}$, since we are interested in real-valued noise.

We consider \eqref{Eqn:SDKDV} as a toy model for the stochastic Burgers equation
\begin{align}
\left\{
\begin{array}{ll}du = (u_{xx} - (u^{2})_{x})dt + \partial_{x} dW  , \ \
t\geq 0, x\in \mathbb{T},
\\
u\big|_{t=0} = u_{0}.
\end{array} \right.
\label{Eqn:SB}
\end{align}
\noindent The equation \eqref{Eqn:SB} is a reformulation of the Kardar-Parisi-Zhang equation (KPZ).  That is, letting $u=\partial_{x}h$, $u$ satisfies \eqref{Eqn:SB} if and only if $h$ satisfies KPZ, given by
\begin{align}
dh = (h_{xx} + \frac{1}{2}(h_{x})^{2})dt + dW, \ \
t\geq 0, x\in \mathbb{T}.
\label{Eqn:KPZ}
\end{align}
The equation \eqref{Eqn:KPZ} was introduced \cite{KPZ} to model the fluctuations (over long scales) of a growing interface.  For example, $h(t,x)$ could describe the height of an interface between regions of opposite polarity inside a ferromagnet subject to an external magnetic field.  Mathematical interest in \eqref{Eqn:KPZ} is motivated by evidence that second order dependence on the derivative $\partial_{x}h$ (over long scales) is universal - that is, independent of the microscopic dynamics - within a certain class of growth models \cite{KS}.  This is verified mathematically in Bertini-Giacomin \cite{BG} for a specific growth model; they obtain \eqref{Eqn:KPZ} as the limit equation of a suitable particle system.

\subsection{Background}
By local well-posedness (LWP) of a stochastic PDE we mean pathwise LWP almost surely.
That is, for almost every fixed $\omega\in\Omega$, the corresponding PDE is LWP.  Similarly, global well-posedness (GWP) of a stochastic PDE will be defined as pathwise GWP almost surely.

Well-posedness of \eqref{Eqn:SB} (and \eqref{Eqn:KPZ}) is an open problem.
Along this direction, previous studies have considered modified equations.  One strategy is to solve a stochastic heat equation which is \textit{formally} equivalent to \eqref{Eqn:KPZ} under the Hopf-Cole transformation, as in Krug-Spohn \cite{KS}.  Another strategy is to modify the stochastic Burgers equation \eqref{Eqn:SB} directly.  For example, Da Prato-Debussche-Temam \cite{DaDe} considered \eqref{Eqn:SB} without the spatial derivative $\partial_{x}$ applied the noise.  That is, they smoothed the additive noise by one derivative in space.  Then, they established local (and global) well-posedness using a fixed point theorem.  Our approach is closer to this strategy, but we modify the linear part of \eqref{Eqn:SB} instead.  By including $-u_{xxx}$ in \eqref{Eqn:SB}, we obtain \eqref{Eqn:SDKDV} (with $\phi =\text{Id}$).

\medskip

With $\phi=\text{Id}$, the equations \eqref{Eqn:SDKDV} and \eqref{Eqn:SB} share a physically significant property; both of these equations \textit{formally} preserve mean zero spatial white noise.  Mean zero spatial white noise is the unique probability measure $\mu$ on the space of mean zero distributions on $\mathbb{T}$ satisfying
\begin{align}
\int e^{i\langle \lambda,u\rangle}d\mu(u)=
e^{-\frac{1}{2}\|\lambda\|_{2}^{2}},
\label{Eqn:WNonHs}
\end{align}
for any mean zero smooth function $\lambda$
on $\mathbb{T}$.  Here $\|\cdot\|_{2}^{2}=\langle\cdot,\cdot\rangle$ denotes the $L^{2}(\mathbb{T},dx)$ norm defined through the $\mathcal{D}$-$\mathcal{D}'$ duality.
From now on, we assume that the spatial mean is always zero, and omit the prefix ``mean zero''.
Let $\{e_{n}\}_{n\in \mathbb{N}}$ be an orthonormal basis of smooth functions in $L^{2}(\mathbb{T})$.  Then white noise is represented as $u = \sum_{n=1}^{\infty}g_{n}e_{n}$, where $\{g_{n}\}_{n\in\mathbb{N}}$ is a sequence of independent Gaussian random variables with mean zero and variance 1.  In particular, white noise is supported in $H^{s}(\mathbb{T})$ for any $s<-\frac{1}{2}$.

It is important for us to clarify that our analysis involves two distinct types of noise.  In the equation \eqref{Eqn:SDKDV}, the term $W(t,x)$ represents \textit{space-time} white noise, as defined in \eqref{Eqn:STwhite}.
Separately, we have just defined \textit{spatial} white noise, a probability measure supported on $H^{s}(\mathbb{T})$ for any $s<-\frac{1}{2}$, represented as $u = \sum_{n=1}^{\infty}g_{n}e_{n}$.  While space-time white noise is a term in our stochastic PDE, we will consider spatial white noise as initial data.

Recall that if $f : X_{1} \longrightarrow X_{2}$ is a measurable map between metric spaces and $\mu$ is a probability measure on $(X_{1}, B(X_{1}))$, then the pushforward $f^{*}\mu$ is the measure on $X_{2}$ given by $f^{*}\mu(A) = \mu(\{x : f(x) \in A\})$ for any Borel set $A \in B(X_{2})$.  For $s<-\frac{1}{2}$, the flow map $S_{t}:H^{s}(\mathbb{T})\rightarrow
H^{s}(\mathbb{T})$ of a stochastic PDE preserves white noise (or equivalently, white noise is invariant under the flow) if $S_{t}^{*}\mu = \mu$ for each time $t\geq 0$.

In \cite{BG}, the invariance of white noise is rigorously proven for a stochastic process which is \textit{formally} a solution to \eqref{Eqn:SB} through the Hopf-Cole transformation; this is how we interpret the formal invariance of white noise for \eqref{Eqn:SB}.  In the appendix, we present a formal proof of white noise invariance for \eqref{Eqn:SDKDV}.  This is based on decomposing \eqref{Eqn:SDKDV} into the KdV equation, plus a rescaled Ornstein-Uhlenbeck process at each spatial frequency; each of these evolutions individually preserve white noise.  Indeed for the KdV equation, the invariance of white noise has been proven rigorously; see Quastel-Valko\cite{QV}, Oh-Quastel-Valko\cite{OQV} and Oh \cite{OH}.

The common feature of (formal) white noise invariance motivates our study of \eqref{Eqn:SDKDV} as a toy model for \eqref{Eqn:SB}.  We are interested in rigorously proving the invariance of white noise for \eqref{Eqn:SDKDV}.  Our main result is a step in this direction (see Theorem \ref{Thm:Contraction} and Corollary \ref{Cor:whitenoise}).


\subsection{Fixed point approach}

We will solve \eqref{Eqn:SDKDV} in the mild formulation
\begin{align}
u(t)= S(t)u_{0} - \frac{1}{2}\int_{0}^{t}S(t-t')\partial_{x}(u^2(t'))dt'
+ \int_{0}^{t}S(t-t')\phi\partial_{x}dW(t').
\label{Eqn:SDKDV-Duh-1}
\end{align}

\noindent  In the equation \eqref{Eqn:SDKDV-Duh-1}, $S(t)u_{0}$ denotes the solution to the linear KdV-Burgers equation with initial data $u_{0}$ evaluated at time $t$, suitably extended to all $t\in\mathbb{R}$.  That is, $\widehat{S(t)u_{0}}(n)=e^{-n^{2}|t|+in^{3}t}\widehat{u_{0}}(n)$ for any $t\in\mathbb{R}$, $n\in\mathbb{Z}$.  To simplify notation, we let
$$ \Phi(t):= \int_{0}^{t}S(t-t')\phi\partial_{x}dW(t').$$
The function $\Phi(t)=\Phi_{\omega}(t)$ is referred to as the stochastic convolution.

Our analysis will take place in the $X^{s,b}$ space of functions of space-time adapted to the (linear) KdV-Burgers equation.  As in \cite{MR}, for $s,b \in \mathbb{R}$, the $X^{s,b}$ space is a weighted Sobolev space whose norm is given by
\begin{align*}
\|u\|_{X^{s,b}} = \Bigg(\int_{\tau\in\mathbb{R}}\sum_{n\in \mathbb{Z}}\langle n\rangle^{2s}\langle i(\tau-n^{3})+n^{2}\rangle^{2b}|\tilde{u}(n,\tau)|^{2}d\tau\Bigg)^{1/2}.
\end{align*}

\noindent Recall that $\langle \cdot \rangle := (1+|\cdot|^{2})^{\frac{1}{2}}$.  The time restricted $X^{s,b}_{T}$ space is defined with the norm
\begin{align*}
\|u\|_{X^{s,b}_{T}} = \inf\big\{\|v\|_{X^{s,b}}: v\in X^{s,b} \text{ and } v(t)\equiv u(t) \text{ for} \ t\in[0,T] \big\}.
\end{align*}
We define the local-in-time version $X^{s,b}_{I}= X^{s,b}_{[a, b]}$ on an interval $I = [a, b]$ in an analogous way.

\noindent The $X^{s,b}$ spaces were used by Bourgain \cite{B1} to study the NLS and KdV equations.  Bourgain established LWP of KdV in $L^{2}(\mathbb{T})$ using a fixed point theorem in the $X^{s,b}$ space (adapted to KdV).  He automatically obtained GWP by invariance of the $L^{2}(\mathbb{T})$ norm.  This was improved to LWP in $H^{-\frac{1}{2}+}(\mathbb{T})$ by Kenig-Ponce-Vega \cite{KPV2}, and the corresponding global result was obtained by Colliander-Keel-Staffilani-Takaoka-Tao \cite{CKSTT} using the I-method (they also obtained GWP at the endpoint $s=-\frac{1}{2}$).  These (local-in-time) results are based primarily on bilinear estimates in (sometimes modified) $X^{s,b}$ spaces.  For example, in \cite{KPV2}, the estimate
\begin{align}
\|\partial_{x}(uv)\|_{X^{s,b-1}} \leq \|u\|_{X^{s,b}}\|v\|_{X^{s,b}}
\label{Eqn:KPV-bilin}
\end{align}
is established for $s\geq -\frac{1}{2}$, $b = \frac{1}{2}$.  This estimate is then used to prove LWP of KdV with a fixed point method.


It is also shown in \cite{KPV2}
that \eqref{Eqn:KPV-bilin} fails if  $s<-\frac{1}{2}$, or $b<\frac{1}{2}$.  That is, the bilinear $X^{s,b}$ estimate used for well-posedness of KdV requires spatial regularity $s\geq -\frac{1}{2}$ and temporal regularity $b\geq \frac{1}{2}$.  In a related result, it is shown in \cite{B2} that the data to solution map $u_{0}\in H^{s}(\mathbb{T})\longmapsto u(t)\in H^{s}(\mathbb{T})$ for KdV is not $C^{3}$ for $s<-\frac{1}{2}$.  This indicates that there is no hope in using a fixed point theorem to establish well-posedness of KdV in $H^{s}(\mathbb{T})$ for $s<-\frac{1}{2}$; fixed point theorems guarantee analyticity of the data to solution map.  Kappeler-Topalov \cite{KT} established GWP of KdV in $H^{-1}(\mathbb{T})$ using the inverse scattering technique, but the data to solution map is merely continuous.  In a separate result, Dix \cite{Dix} established ill-posedness of the deterministic Burgers equation in $H^{s}(\mathbb{R})$ for $s<-\frac{1}{2}$ due to a lack of uniqueness.  We observe that, for both the deterministic KdV and the deterministic Burgers equation, $s=-\frac{1}{2}$ is the optimal regularity for well-posedness in $H^{s}$ via the fixed point method.

The barrier of $s=-\frac{1}{2}$ is not present in the analysis of the deterministic KdV-Burgers equation (\eqref{Eqn:SDKDV} with $\phi=0$).  See, for example, Molinet-Ribaud \cite{MR} and Molinet-Vento \cite{MV}.  In particular, LWP of the KdV-Burgers equation in $H^{-1}(\mathbb{T})$ is established in \cite{MV} using a fixed point method; the data to solution map is analytic.  Moreover, in \cite{MR}, an estimate of the form \eqref{Eqn:KPV-bilin}
is established for any $s>-1$, $b= \frac{1}{2}$.  In this way, the combination of dispersion and dissipation in the KdV-Burgers equation allows for an improved bilinear estimate of the form \eqref{Eqn:KPV-bilin} (in particular, with spatial regularity $s<-\frac{1}{2}$), which yields superior LWP results.


\subsection{Regularity of the noise}

Let us now discuss function spaces which capture the regularity of space-time white noise.  Spaces of this type were considered in De Bouard-Debussche-Tsutsumi \cite{DDT},
and Oh \cite{OH}, to study the stochastic KdV (with additive noise), given by
\begin{align}
du = (-u_{xxx} - (u^{2})_{x})dt + \phi dW  , \quad  t\geq 0, x\in \mathbb{T}.
\label{Eqn:SKDV}
\end{align}

\noindent The argument in \cite{DDT} is based on the result of Roynette \cite{Ro} on the endpoint regularity of Brownian motion in a Besov space.  They proved a variant of the bilinear estimate \eqref{Eqn:KPV-bilin} adapted to their Besov space setting, establishing LWP of \eqref{Eqn:SKDV} using the fixed point method.  However, the modified bilinear estimate required a slight regularization of the noise in space via the bounded operator $\phi$, so that the smoothed noise has spatial regularity $s > -\frac{1}{2}$.  In particular, they could not treat the case of space-time white noise, as written in \eqref{Eqn:SKDV} (i.e. $\phi = Id$).

These developments are clarified in \cite{OH} with two observations.  The first observation is that a certain modified Besov space, and the corresponding $X^{s,b}$-type space, capture the regularity of spatial and space-time white noise, respectively, for $s<-\frac{1}{2}$ and $b<\frac{1}{2}$.  The second is that apriori estimates on the \textit{second iteration} of the mild formulation of \eqref{Eqn:SKDV} (in these spaces) are sufficient for LWP of \eqref{Eqn:SKDV}.  The condition $b<\frac{1}{2}$ is dictated by the space-time white noise, highlighting the challenge behind proving well-posedness for \eqref{Eqn:SKDV}; recall that the bilinear $X^{s,b}$ estimate \eqref{Eqn:KPV-bilin} for KdV requires $b\geq \frac{1}{2}$.

Notice that the additive noise in \eqref{Eqn:SKDV} lacks the spatial derivative $\partial_{x}$ appearing in \eqref{Eqn:SB}.  Indeed, an endpoint result (\cite{OH}, in terms of roughness of the noise using a fixed point method) for the stochastic KdV incorporates noise which is a full derivative smoother than the noise in \eqref{Eqn:SB}.  In a similar way, the noise considered in \cite{DaDe} is a full derivative smoother than the noise appearing in \eqref{Eqn:SB}.

\subsection{Results}

The main result of this paper is LWP of \eqref{Eqn:SDKDV} with a rougher additive noise than in \cite{DaDe,OH} (Theorem \ref{Thm:Contraction}).  We exploit the combination of dispersion and dissipation in (the linear part of) \eqref{Eqn:SDKDV} to relax the conditions placed on the noise in previous analyses of the stochastic Burgers equation \cite{DaDe} (dissipation only) and the stochastic KdV \cite{OH} (dispersion only).  This result holds (almost surely) with spatial white noise as initial data; it is a first step towards a rigorous proof of the white noise invariance for \eqref{Eqn:SDKDV}.  Indeed, to prove the white noise invariance, one must first establish well-defined dynamics (almost surely) for \eqref{Eqn:SDKDV} in the support of white noise.  The combination of dispersion and dissipation in the KdV-Burgers propagator helps our analysis in the following way: we can establish a bilinear estimate of the form \eqref{Eqn:KPV-bilin} (see Proposition \ref{Prop:bilinear} below) in the $X^{s,b}_{T}$ space adapted to the KdV-Burgers equation, with $s<-\frac{1}{2}$ and $b<\frac{1}{2}$ (recall that for the $X^{s,b}_{T}$ space adapted to the KdV equation, this requires $s\geq -\frac{1}{2}$ and $b\geq \frac{1}{2}$).  Taking $s<-\frac{1}{2}$, we can treat white noise as initial data.  With $b<\frac{1}{2}$, the $X^{s,b}_{T}$ space captures the regularity of the space-time white noise.  That is, the stochastic convolution $\Phi(t)$ is almost surely an element of $X^{s,b}_{T}$, under appropriate conditions on $\phi$ (see Proposition \ref{Prop:EstofStochConv}).  Finally, the dissipative semigroup has a smoothing effect in space, and the conditions imposed on $\phi$ by Proposition \ref{Prop:EstofStochConv} allow for a rougher space-time noise than in \cite{DaDe,OH}.  Specifically, we can relax the smoothing of the noise to $s+2b+\frac{1}{2}= \frac{13}{16}+<1$ spatial derivatives.

\medskip

We pause to introduce some notation.  Let us discuss the technical conditions imposed on the smoothing operator $\phi$.  For our purposes, $\phi$ is assumed to be a Hilbert-Schmidt operator from $L^{2}(\mathbb{T})$ to $H^{s}(\mathbb{T})$ (written $\phi\in HS(L^{2};H^{s})$) for some $s\in\mathbb{R}$.  The space $HS(L^{2};H^{s})$ is endowed with its natural norm
\begin{align*}
\|\phi\|_{HS(L^{2};H^{s})} = \Bigg(\sum_{n\in \mathbb{N}}\|\phi e_{n}\|_{H^{s}}^{2}\Bigg)^{1/2},
\end{align*}
where $(e_{n})_{n\in\mathbb{N}}$ is any complete orthonormal system in $L^{2}(\mathbb{T})$.  We will further assume that $\phi$ is a convolution operator.  That is, for any function $f$ on $\mathbb{T}$, and any $n\in \mathbb{Z}$, we have
\begin{align}
\widehat{(\phi f)}(n) = \phi_{n}\widehat{f}(n),
\label{Eqn:phi-diagonal}
\end{align}
for some $\phi_{n}\in\mathbb{R}$.  For example, in \cite{DaDe}, they consider $\phi=\partial_{x}^{-1}$, which corresponds to $\phi_{n}=\frac{1}{n}$, for each $n$.  Using the standard Fourier basis for $L^{2}(\mathbb{T})$, we find that for convolution operators $\phi$, we have $\|\phi\|_{HS(L^{2};H^{s})} = (\sum_{n}\langle n\rangle^{2s}|\phi_{n}|^{2})^{1/2} = \|\phi\|_{H^{s}}$ (with a slight abuse of notation).  The point to take away is that by placing $\phi \in HS(L^{2},H^{s})$, we are smoothing the additive noise in \eqref{Eqn:SDKDV} by $s+\frac{1}{2}$ spatial derivatives.

Other notation we will use includes $a\wedge b : = \min(a,b)$ and $a\vee b : = \max(a,b)$.  Also, we will use $\hat{u}$ to denote the Fourier transform in space, and $\tilde{u}$ to denote the space-time Fourier transform.

\subsection{Statement of Theorems}

Here is the first result we obtain.

\begin{theorem}[Local well-posedness] Given $0<\varepsilon<\frac{1}{16}$, let $s\geq -\frac{1}{2}-\varepsilon$.  Suppose $\phi\in HS(L^{2};H^{s+1-2\varepsilon})$ of the form \eqref{Eqn:phi-diagonal}.  Then \eqref{Eqn:SDKDV} is LWP in $H^{s}(\mathbb{T})$ for mean zero data.  That is, if $u_{0}\in H^{s}(\mathbb{T})$ has mean zero, there exists a stopping time $T_{\omega}>0$ and a unique process $u\in C([0,T_{\omega}];H^{s}(\mathbb{T}))$ satisfying \eqref{Eqn:SDKDV} on $[0,T_{\omega}]$ almost surely.
\label{Thm:Contraction}
\end{theorem}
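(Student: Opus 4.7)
My plan is to run a standard pathwise Banach fixed point argument on the Duhamel map
\[
\Gamma u(t) = S(t)u_0 - \tfrac{1}{2}\int_0^t S(t-t')\partial_x(u^2(t'))\,dt' + \Phi(t),
\]
viewed as a self-map of a ball in $X^{s,b}_{T_\omega}$ adapted to the KdV--Burgers semigroup, for an appropriate deterministic $b$ slightly below $\frac{1}{2}$ and a positive random stopping time $T_\omega$. The entire probabilistic content will be pushed into controlling $\Phi$ in this space (Proposition \ref{Prop:EstofStochConv}), after which the argument is essentially deterministic.

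\textbf{Parameter choice and ingredients.} Fix $b = \frac{1}{2} - \varepsilon$ (or something comparable) and $s \geq -\frac{1}{2} - \varepsilon$ as in the statement. I would then rely on four estimates for the space $X^{s,b}_T$: (i) the homogeneous linear estimate $\|S(t)u_0\|_{X^{s,b}_T} \lesssim \|u_0\|_{H^s}$; (ii) a retarded Duhamel estimate
\[
\Bigl\|\int_0^t S(t-t')F(t')\,dt'\Bigr\|_{X^{s,b}_T} \lesssim T^{\theta}\,\|F\|_{X^{s,b-1+\theta}_T}
\]
for some $\theta>0$; (iii) the bilinear estimate of Proposition \ref{Prop:bilinear}, $\|\partial_x(uv)\|_{X^{s,b-1}_T} \lesssim \|u\|_{X^{s,b}_T}\|v\|_{X^{s,b}_T}$, valid in the regime $s<-\frac{1}{2}$, $b<\frac{1}{2}$ allowed by the dissipative contribution to the KdV--Burgers symbol; and (iv) the almost sure finiteness $\|\Phi\|_{X^{s,b}_T}<\infty$ guaranteed by Proposition \ref{Prop:EstofStochConv} under the assumption $\phi\in HS(L^2;H^{s+1-2\varepsilon})$ (the $\partial_x$ on $dW$ costs one derivative, $\phi$ supplies $s+\frac{3}{2}-2\varepsilon$ derivatives of smoothing on white noise, and the dissipation recovers the remaining $2b$ derivatives, which is precisely the accounting $s+2b+\frac{1}{2}=\frac{13}{16}+$ mentioned in the introduction).

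\textbf{Running the contraction.} Combining (i)--(iv), I obtain
\[
\|\Gamma u\|_{X^{s,b}_{T}} \leq C\|u_0\|_{H^s} + C\,T^\theta \|u\|_{X^{s,b}_T}^2 + \|\Phi\|_{X^{s,b}_T},
\]
with the analogous Lipschitz bound
\[
\|\Gamma u - \Gamma v\|_{X^{s,b}_T} \leq C\,T^\theta\bigl(\|u\|_{X^{s,b}_T}+\|v\|_{X^{s,b}_T}\bigr)\|u-v\|_{X^{s,b}_T}.
\]
For fixed $\omega$ in the full-measure set where $\|\Phi\|_{X^{s,b}_1}<\infty$, set $R(\omega) = 2\bigl(C\|u_0\|_{H^s}+\|\Phi\|_{X^{s,b}_1}\bigr)$ and define the stopping time $T_\omega$ by the condition $4C\,T_\omega^\theta R(\omega)\leq 1$. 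Then $\Gamma$ is a strict contraction on the closed ball of radius $R(\omega)$ in $X^{s,b}_{T_\omega}$, producing a unique fixed point $u\in X^{s,b}_{T_\omega}$, which one then upgrades to $C([0,T_\omega];H^s)$ via a standard embedding for the $X^{s,b}$ space with $b>\frac{1}{3}$. Uniqueness in the full class $C([0,T_\omega];H^s)$ follows by a short extra argument using the bilinear estimate on differences.

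\textbf{Main obstacle.} The structural difficulty is the clash between $b<\frac{1}{2}$ (forced by the fact that space-time white noise has temporal regularity just below $\frac{1}{2}$) and $b\geq\frac{1}{2}$ (needed for the KdV bilinear estimate, which would otherwise fail). Both issues are already resolved by the cited auxiliary results: Proposition \ref{Prop:bilinear} supplies the bilinear estimate with $b<\frac{1}{2}$ and $s<-\frac{1}{2}$ because of the dissipative modifier in the $X^{s,b}$ weight, and Proposition \ref{Prop:EstofStochConv} places $\Phi$ into the same space under the Hilbert--Schmidt condition on $\phi$. Once $\varepsilon < \frac{1}{16}$ is imposed so that the linear, bilinear, and stochastic-convolution estimates can be satisfied simultaneously at compatible $(s,b)$, assembling the contraction itself is routine.
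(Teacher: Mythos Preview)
Your overall strategy---a pathwise contraction in $X^{s,b}_{T_\omega}$ with $b=\tfrac{1}{2}-\varepsilon$, using the linear, Duhamel, bilinear, and stochastic-convolution estimates---matches the paper's approach exactly. The minor cosmetic difference is that the paper subtracts off the linear and stochastic parts and runs the contraction on $v=u-S(t)u_0-\Phi$ in the unit ball, whereas you keep $u$; these are equivalent.

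There is, however, a genuine gap in your continuity step. You claim that the fixed point $u\in X^{s,b}_{T_\omega}$ is upgraded to $C([0,T_\omega];H^s)$ ``via a standard embedding for the $X^{s,b}$ space with $b>\tfrac{1}{3}$.'' No such embedding is available here: the embedding $X^{s,b}\hookrightarrow C_t H^s_x$ requires $b>\tfrac{1}{2}$, and this fails precisely because you have been forced to take $b=\tfrac{1}{2}-\varepsilon<\tfrac{1}{2}$ to accommodate the stochastic convolution. The paper flags this explicitly and supplies two separate ingredients to close the gap: continuity of the nonlinear Duhamel term is obtained from Proposition~\ref{Prop:ContofNonlin} (which shows $t\mapsto\int_0^t S(t-t')F(t')\,dt'\in C(\mathbb{R}_+;H^{s+2\gamma})$ for $F\in X^{s,-\frac{1}{2}+\gamma}$), and almost-sure continuity of $\Phi$ is established in Proposition~\ref{Prop:ContofStochConv} by a factorization/Da~Prato-type argument. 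Without invoking these (or something equivalent), your proof does not deliver $u\in C([0,T_\omega];H^s)$.
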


With $\varepsilon = \frac{1}{16}-$ and $s=-\frac{1}{2}-\varepsilon<-\frac{1}{2}$, we have $s+1-2\varepsilon = \frac{5}{16}+$.  Then
$\phi = \partial_{x}^{-(\frac{13}{16}+)}$ satisfies
$\phi \in HS(L^{2},H^{\frac{5}{16}+})$, as required to apply Theorem \ref{Thm:Contraction}.
With this choice of $\phi$, we have smoothed the noise in \eqref{Eqn:SDKDV} by $\frac{13}{16}+$ spatial derivatives.  In contrast, the space-time noise in \cite{DaDe} is smoothed out by a full derivative in space (by taking $\phi = \partial_{x}^{-1}$).  Recall that spatial white noise is represented as $u=\sum_{n}g_{n}e_{n} \in H^{s}(\mathbb{T})$
almost surely for $s<-\frac{1}{2}$.  It follows that Theorem \ref{Thm:Contraction} applies almost surely with initial data given by spatial white noise.  We have arrived at a consequence of Theorem \ref{Thm:Contraction} worth mentioning.

\begin{corollary}
Let $\phi = \partial_{x}^{-(\frac{13}{16}+)}$, then \eqref{Eqn:SDKDV} is almost surely LWP with spatial white noise as initial data.
\label{Cor:whitenoise}
\end{corollary}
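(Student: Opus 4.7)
The plan is to deduce Corollary \ref{Cor:whitenoise} by a pathwise application of Theorem \ref{Thm:Contraction}, verifying that the choice $\phi=\partial_x^{-(13/16+)}$ fits the hypotheses of the theorem and that spatial white noise almost surely furnishes admissible initial data.

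First I would carry out the parameter bookkeeping. Write $\phi=\partial_x^{-\alpha}$ for some fixed $\alpha>\tfrac{13}{16}$, so that $\phi$ is of the convolution form \eqref{Eqn:phi-diagonal} with $|\phi_n|\sim|n|^{-\alpha}$. Pick $\varepsilon\in(0,\tfrac{1}{16})$ close enough to $\tfrac{1}{16}$ that $\alpha>1-3\varepsilon$, and set $s=-\tfrac{1}{2}-\varepsilon$. Then $s+1-2\varepsilon=\tfrac{1}{2}-3\varepsilon$, and by the characterization of the Hilbert--Schmidt norm for convolution operators recorded in the introduction,
\[
\|\phi\|_{HS(L^2;H^{s+1-2\varepsilon})}^2 \;\asymp\; \sum_{n\neq 0}|n|^{2(s+1-2\varepsilon)-2\alpha} \;=\; \sum_{n\neq 0}|n|^{1-6\varepsilon-2\alpha} \;<\;\infty,
\]
where summability follows precisely from $\alpha>1-3\varepsilon$. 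Hence $\phi\in HS(L^2;H^{s+1-2\varepsilon})$ and the hypotheses on $\phi$ in Theorem \ref{Thm:Contraction} are met.

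Next I would verify that spatial white noise almost surely belongs to $H^s(\mathbb{T})$ with mean zero. Since $s<-\tfrac{1}{2}$, the representation $u_0=\sum_{n\neq 0}g_n e_n$ with standard Gaussian coefficients gives
\[
\mathbb{E}\|u_0\|_{H^s(\mathbb{T})}^2 \;=\; \sum_{n\neq 0}\langle n\rangle^{2s} \;<\;\infty,
\]
so $u_0\in H^s(\mathbb{T})$ almost surely; the absence of the $n=0$ mode in the representation \eqref{Eqn:WNonHs} ensures the spatial mean vanishes.

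To conclude, realize $u_0$ on a probability space independent of the one carrying the driving noise $W$, and work on the product probability space. By Fubini, there is a full-measure set of joint samples on which $u_0\in H^s(\mathbb{T})$ has mean zero; for each such sample Theorem \ref{Thm:Contraction} yields a stopping time $T_\omega>0$ and a unique solution $u\in C([0,T_\omega];H^s(\mathbb{T}))$ of \eqref{Eqn:SDKDV}. All of the analytical substance already sits inside Theorem \ref{Thm:Contraction}; the only steps specific to the corollary are the Hilbert--Schmidt bookkeeping and the Gaussian tail estimate above, so no new obstacle is expected.
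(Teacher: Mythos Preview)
Your proposal is correct and follows essentially the same route as the paper: choose $\varepsilon$ just below $\tfrac{1}{16}$, set $s=-\tfrac12-\varepsilon$, verify that $\phi=\partial_x^{-\alpha}$ with $\alpha>\tfrac{13}{16}$ lies in $HS(L^2;H^{s+1-2\varepsilon})$ via the summability condition $\alpha>1-3\varepsilon$, and then invoke Theorem~\ref{Thm:Contraction} together with the almost sure membership of spatial white noise in $H^s(\mathbb{T})$ for $s<-\tfrac12$. Your product-space/Fubini remark is a reasonable extra measurability clarification that the paper simply leaves implicit.
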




\vspace{0.1in}


\noindent  After proving Theorem \ref{Thm:Contraction}, we establish global well-posedness of \eqref{Eqn:SDKDV} with a smoothed noise.

\begin{theorem}[Global well-posedness]  Let $\phi\in HS (L^{2};H^{1})$.
Then \eqref{Eqn:SDKDV} is GWP in $L^{2}(\mathbb{T})$ for mean zero data.  That is, if $u_{0}\in L^{2}(\mathbb{T})$ has mean zero, then for any $T>0$ there is a unique process $u\in C([0,T];L^{2}(\mathbb{T}))$ satisfying \eqref{Eqn:SDKDV} on $[0,T]$ almost surely.
\label{Thm:GWP}
\end{theorem}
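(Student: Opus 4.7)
The strategy is classical: combine local well-posedness (Theorem \ref{Thm:Contraction}) with an a priori $L^2$ bound produced by It\^o's formula, and then iterate local existence to reach any prescribed time $T$. Since $\phi \in HS(L^2;H^1) \subset HS(L^2;H^{1-2\varepsilon})$ for every $\varepsilon \in (0,\tfrac{1}{16})$, Theorem \ref{Thm:Contraction} applied with $s=0$ already furnishes an almost sure local solution $u \in C([0,T_\omega]; L^2(\mathbb{T}))$, with $T_\omega > 0$ depending measurably on $\|u_0\|_{L^2}$ and on fixed norms of $\phi$.

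For the a priori bound, I would apply It\^o's formula to $\|u(t)\|_{L^2}^2$. Three structural cancellations make the computation favorable: the dispersive term yields $\langle u, u_{xxx}\rangle = 0$ by integration by parts; the nonlinearity yields $\langle u, (u^2)_x\rangle = \tfrac{2}{3}\int_\mathbb{T} \partial_x(u^3)\,dx = 0$; and the dissipation gives $2\langle u, u_{xx}\rangle = -2\|u_x\|_{L^2}^2$. The It\^o correction from the additive noise equals $C_\phi := \|\phi\partial_x\|_{HS(L^2;L^2)}^2 = \sum_{n\neq 0} n^2\phi_n^2 \leq \|\phi\|_{HS(L^2;H^1)}^2 < \infty$. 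Putting these together yields the energy identity
\begin{align*}
\|u(t)\|_{L^2}^2 + 2\int_0^t \|u_x(s)\|_{L^2}^2\,ds = \|u_0\|_{L^2}^2 + C_\phi\, t + 2 M(t),
\end{align*}
where $M(t) = \int_0^t \langle u(s), \phi\partial_x\, dW(s)\rangle$ is a martingale whose quadratic variation satisfies $\langle M\rangle(t) \leq C_\phi \int_0^t \|u(s)\|_{L^2}^2\, ds$.

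To convert this into a pathwise bound, I would introduce the stopping times $\tau_R := \inf\{t : \|u(t)\|_{L^2}\geq R\}$ and work up to $t \wedge \tau_R$. Taking the supremum in time, invoking the Burkholder-Davis-Gundy inequality to control $\mathbb{E}\sup_{s\leq t\wedge\tau_R}|M(s)|$ by a constant times $\mathbb{E}\langle M\rangle(t\wedge\tau_R)^{1/2}$, absorbing via Young's inequality, and applying Gronwall's lemma, I would obtain $\mathbb{E}\sup_{s\leq T\wedge\tau_R}\|u(s)\|_{L^2}^2 \leq K(T,u_0,\phi)$ uniformly in $R$. Markov's inequality then forces $\mathbb{P}(\tau_R \leq T) \to 0$ as $R\to\infty$, so $\|u(t)\|_{L^2}$ is almost surely bounded on every compact time interval.

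With the a priori bound in hand, iteration of Theorem \ref{Thm:Contraction} extends the solution beyond $T_\omega$, and since the local existence time depends only on the $L^2$ norm at the restart (and on fixed data for $\phi$), finitely many steps almost surely cover $[0,T]$ for any prescribed $T$. The main obstacle is justifying the It\^o identity at the mere $L^2$ regularity supplied by Theorem \ref{Thm:Contraction}; I would handle this by performing the computation on a Galerkin truncation $u^N$, for which It\^o's formula is classical, and then passing to the limit using the convergence afforded by the contraction argument, or equivalently by carrying out the identity on the Da Prato--Debussche decomposition $u = \Phi + v$, where the smoother component $v$ admits the chain rule directly.
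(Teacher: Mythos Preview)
Your approach is correct and shares the same core ingredient as the paper---the It\^o energy identity for $\|u\|_{L^2}^2$ combined with Burkholder--Davis--Gundy on the martingale term---but the global-in-time argument is organized differently. The paper works entirely through the Galerkin approximations $u^N$: it first proves global existence for each $u^N$ (a nontrivial appendix result requiring higher-moment control), then derives the uniform-in-$N$ bound $\mathbb{E}\sup_{[0,T]}\|u^N\|_{L^2}^2 \le C$, extracts a weak-$*$ limit $\tilde u$ in $L^2(\Omega;L^\infty_t L^2_x)$, and finally identifies $\tilde u$ with the local solution $u$ by showing $u^N \to u$ in $X^{0,\frac12-\varepsilon}_{\tilde T_\omega}$ on each subinterval. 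Your route via the stopping times $\tau_R$ is more direct: it sidesteps the separate global existence result for $u^N$ and the weak-$*$ identification step, at the cost of having to justify the It\^o identity for the actual solution (which, as you note, is done by Galerkin approximation anyway). The paper's approach buys a clean separation between the a priori bound and the convergence argument; yours buys brevity.

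One imprecision worth flagging: you write that the local existence time ``depends only on the $L^2$ norm at the restart (and on fixed data for $\phi$)''. In fact $T_\omega$ also depends on the random quantity $\|\chi_{[0,T]}\Phi\|_{X^{0,\frac12-\varepsilon}}$ (see \eqref{Eqn:stoptime}), which must be controlled uniformly over all restart subintervals. The paper handles this explicitly by bounding the stochastic convolution once over $[0,T]$ via Proposition \ref{Prop:EstofStochConv}; your iteration step should acknowledge the same.
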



This paper is organized as follows.  In Section \ref{Sec:LWP}
we present local well-posedness (Theorem \ref{Thm:Contraction}).  In Section \ref{Sec:GWP} we present global well-posedness (Theorem \ref{Thm:GWP}).
In the appendix, Section \ref{Sec:app}, we include a formal proof of white noise invariance for \eqref{Eqn:SDKDV}, followed by the proofs of Propositions (\ref{Prop:bilinear} and \ref{Prop:finiteGWP}), which are the longest and most technical proofs appearing in the paper.

\section{Local well-posedness}
\label{Sec:LWP}

In this Section we prove Theorem \ref{Thm:Contraction}.
For each fixed $\omega\in\Omega$, we will prove LWP using the Fourier restriction norm method.
More precisely, we will prove that the solution of \eqref{Eqn:SDKDV-Duh-1} is almost surely the unique fixed point of a contraction on $S(t)u_{0} + \Phi(t) + B$, where $B$ is the unit ball in the space $X^{s,b}_{T_{\omega}}$ of space-time functions (adapted to the KdV-Burgers equation), for a stopping time $T_{\omega}>0$, and suitable $s,b \in \mathbb{R}$.

\subsection{Local estimates}

The proof of Theorem \ref{Thm:Contraction} will require five key Propositions, and a lemma, concerning the $X^{s,b}_{T}$ spaces.  In this subsection we present these Propositions (and lemmata).

\begin{proposition}[Molinet-Ribaud \cite{MR}, Linear estimate]  For any $s\in\mathbb{R}$,
\begin{align*}
\|S(t)\phi\|_{X^{s,\frac{1}{2}}_{T}} \leq \|\phi\|_{H^{s}(\mathbb{T})},
\end{align*}
for all $\phi \in H^{s}(\mathbb{T})$.
\label{Prop:Linear}
\end{proposition}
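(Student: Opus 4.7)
The plan is to reduce the restricted-norm estimate to an explicit Fourier-side computation. Since $\|\cdot\|_{X^{s,1/2}_T}$ is defined by infimum over extensions of $S(t)\phi\big|_{[0,T]}$ to $\mathbb{R}$, it suffices to produce one extension whose full-line $X^{s,1/2}$ norm is controlled by $\|\phi\|_{H^s}$. I would take $v(t,x) := \psi(t)S(t)\phi(x)$, where $\psi \in C_c^\infty(\mathbb{R})$ is a fixed bump function with $\psi \equiv 1$ on $[0,T]$ (taking $T$ bounded, e.g.\ $T \leq 1$, and absorbing the $\psi$-dependence into implicit constants).

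A direct computation of the space-time Fourier transform factors cleanly: because the dispersive phase $e^{in^3 t}$ translates the time-frequency variable, one obtains
\[
\widetilde{v}(n,\tau) = \widehat{\phi}(n)\,\widehat{g_n}(\tau - n^3), \qquad g_n(t) := \psi(t)\,e^{-n^2|t|}.
\]
Plugging this into the definition of the $X^{s,1/2}$ norm and changing variables $\sigma = \tau - n^3$ reduces the desired estimate to the single uniform bound
\[
\sup_{n \in \mathbb{Z}} \int_{\mathbb{R}} \langle i\sigma + n^2 \rangle \, |\widehat{g_n}(\sigma)|^2 \, d\sigma \;\lesssim\; 1.
\]
Using $\langle i\sigma + n^2 \rangle \simeq n^2 + \langle \sigma \rangle$, I would split the integral into two pieces. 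The $n^2$-piece equals $n^2\|g_n\|_{L^2_t}^2 \lesssim n^2 \int e^{-2n^2|t|}\,dt = O(1)$ uniformly in $n \neq 0$ (the mode $n=0$ is handled trivially because $g_0 = \psi$ is Schwartz). The $\langle\sigma\rangle$-piece is exactly $\|g_n\|_{H^{1/2}_t}^2$, which can be estimated either by the explicit identity $\mathcal{F}_t[e^{-n^2|t|}](\sigma) = \frac{2n^2}{n^4+\sigma^2}$ followed by an elementary calculation, or more slickly by interpolation: $\|e^{-n^2|t|}\|_{L^2}^2 \sim n^{-2}$ and $\|e^{-n^2|t|}\|_{H^1}^2 \sim n^2$ interpolate to give $\|e^{-n^2|t|}\|_{H^{1/2}}^2 \lesssim 1$, and multiplication by the smooth compactly supported $\psi$ does not worsen this bound (a convolution estimate in frequency, using rapid decay of $\widehat{\psi}$).

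The main obstacle, conceptually, is the uniform-in-$n$ control of $\|g_n\|_{H^{1/2}_t}^2$: the function $e^{-n^2|t|}$ develops a sharper cusp at $t=0$ as $n$ grows, and one needs to verify that the loss of smoothness is precisely balanced by the decreasing $L^\infty$-scale ($\sim n^{-2}$) on which $g_n$ is supported. This parabolic scaling balance is exactly what the exponent $b = \tfrac{1}{2}$ paired with the dissipative weight $\langle i(\tau - n^3) + n^2 \rangle$ is designed to capture, and it is the reason the linear estimate closes at $b = \tfrac{1}{2}$ without any logarithmic loss. Once this uniform bound is in hand, summing in $n$ against $\langle n \rangle^{2s}|\widehat{\phi}(n)|^2$ yields $\|v\|_{X^{s,1/2}}^2 \lesssim \|\phi\|_{H^s}^2$, completing the plan.
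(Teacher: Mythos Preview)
Your argument is correct. The paper does not actually prove this proposition; it simply cites \cite[Proposition 2.1]{MR} for the result, so there is no in-paper proof to compare against. Your approach---choosing the explicit extension $v(t)=\psi(t)S(t)\phi$, reducing to the uniform-in-$n$ bound
\[
\int_{\mathbb{R}}\langle i\sigma+n^{2}\rangle\,|\widehat{g_{n}}(\sigma)|^{2}\,d\sigma\lesssim 1,\qquad g_{n}(t)=\psi(t)e^{-n^{2}|t|},
\]
and then splitting via $\langle i\sigma+n^{2}\rangle\sim n^{2}+\langle\sigma\rangle$---is the standard route and is essentially what Molinet--Ribaud do. The interpolation step $\|e^{-n^{2}|t|}\|_{H^{1/2}_{t}}^{2}\leq\|e^{-n^{2}|t|}\|_{L^{2}_{t}}\|e^{-n^{2}|t|}\|_{H^{1}_{t}}\sim n^{-1}\cdot n=1$ is clean, and your remark that multiplication by $\psi\in C_{c}^{\infty}$ is bounded on $H^{1/2}_{t}$ with operator norm independent of $n$ closes the uniformity. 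One cosmetic point: the statement as written has $\leq$ rather than $\lesssim$, but in context (and in \cite{MR}) an implicit constant depending on the fixed cutoff $\psi$ is understood, so this is not an issue.
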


\begin{proposition}[Molinet-Ribaud \cite{MR}, Non-homogeneous linear estimate]  For any $s\in\mathbb{R}$, $\gamma>0$,
\begin{align*}
\bigg\|\int_{0}^{t}S(t-t')v(t')dt'\bigg\|_{X^{s,\frac{1}{2}}_{T}}
\leq \|v\|_{X^{s,-\frac{1}{2}+\delta}_{T}}
\end{align*}
for all $v \in X^{s,-\frac{1}{2}+\delta}_{T}$.
\label{Prop:Nonhom-Linear}
\end{proposition}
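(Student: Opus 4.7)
The plan is to follow the standard Bourgain-space approach for non-homogeneous linear estimates, adapted to the dissipative-dispersive symbol $L(n,\tau) := i(\tau-n^3)+n^2$ of the KdV-Burgers propagator. By the definition of $X^{s,b}_T$, it suffices to fix an extension $\tilde v\in X^{s,-1/2+\delta}$ with $\|\tilde v\|_{X^{s,-1/2+\delta}} \leq 2\|v\|_{X^{s,-1/2+\delta}_T}$, choose a smooth bump $\psi$ with $\psi \equiv 1$ on $[-1,1]$, and estimate the global quantity $\|\psi(t/T)\int_0^t S(t-t')\tilde v(t')\,dt'\|_{X^{s,1/2}}$, which agrees on $[0,T]$ with the left-hand side of the claimed inequality.

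Next I would perform the explicit computation on the Fourier side. Writing $\tilde v$ via its space-time Fourier expansion and integrating the kernel $e^{-n^2(t-t')+in^3(t-t')}$ against $e^{it'\tau'}$, the inner integral evaluates to $(e^{it\tau'}-e^{-n^2 t+in^3 t})/L(n,\tau')$. This splits the Duhamel integral into an \emph{inhomogeneous piece}, whose space-time Fourier transform is essentially $L(n,\tau')^{-1}\tilde v(n,\tau')$ convolved with $\widehat{\psi_T}$ in $\tau'$, plus a \emph{free piece} which is the linear KdV-Burgers evolution of the frequency-dependent function obtained by integrating $L(n,\tau')^{-1}\tilde v(n,\tau')$ against $\widehat{\psi_T}$ in $\tau'$. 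For the inhomogeneous piece, the desired bound follows by pulling out $\langle L(n,\tau)\rangle^{1/2}\langle L(n,\tau')\rangle^{-1/2+\delta}$ and using the rapid decay of $\widehat{\psi_T}(\tau-\tau')$ to absorb the mismatch between $\tau$ and $\tau'$. For the free piece, Proposition~\ref{Prop:Linear} reduces the estimate to controlling this auxiliary $H^s$ datum, which is handled by dyadically decomposing in $|L(n,\tau')|$: on the high-modulation region $|L|\gtrsim 1/T$ one absorbs $L^{-1}$ into $\langle L\rangle^{-1/2+\delta}$ at a harmless $T^\delta$ cost, while on the low-modulation region one Taylor-expands $e^{tL}-1$ to extract enough powers of $tL$ to cancel the singular $L^{-1}$.

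The main obstacle is coordinating the time cutoff $\psi_T$ with the dyadic $L$-decomposition so that the $\delta>0$ slack in the hypothesis is exactly what pays for the borderline $b=-1/2$ scaling in the $\tau$-convolution. The dissipative contribution $\mathrm{Re}\,L=n^2\geq 1$ on the mean-zero sector where $n\neq 0$ substantially simplifies the low-modulation regime relative to pure KdV, since it forces $\langle L\rangle\gtrsim 1$ automatically and so collapses part of the Taylor-expansion analysis; nevertheless, one must still track constants carefully to rule out a logarithmic divergence in $T$ and to guarantee that only a uniform constant (independent of $T$, as stated) appears on the right-hand side.
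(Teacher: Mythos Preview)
The paper does not actually prove this proposition: immediately after the statement it simply records that ``Propositions \ref{Prop:Linear} and \ref{Prop:Nonhom-Linear} are both established in \cite[Propositions 2.1 and 2.3]{MR}.'' So there is no in-house argument to compare against; you are supplying a proof where the paper relies on a citation.

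Your outline is essentially the standard argument one finds in \cite{MR} (and, in the purely dispersive setting, in \cite{GTV,KPV2}): extend $v$, localize with a smooth cutoff, compute the Duhamel kernel on the Fourier side, and split into the piece carrying the factor $e^{it\tau'}/L(n,\tau')$ and the piece carrying $e^{-n^{2}t+in^{3}t}/L(n,\tau')$. Handling the first by the rapid decay of $\widehat{\psi_{T}}$ and the second by reducing to Proposition~\ref{Prop:Linear} after a dyadic decomposition in $|L|$ (with Taylor expansion near $L=0$) is exactly the right skeleton. Your remark that the dissipative part forces $\langle L(n,\tau')\rangle \geq \langle n^{2}\rangle \geq 1$ on the mean-zero sector, so that the genuinely small-$|L|$ region is essentially vacuous, is the key simplification over the pure KdV case and is precisely what Molinet--Ribaud exploit. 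One small caveat: as stated, the proposition in the paper suppresses the implicit constant and the customary restriction $0<T\leq 1$; in \cite{MR} the estimate is proved with a constant that may grow with $T$, and the paper never needs uniformity in large $T$ (the small power of $T$ used in the contraction comes separately from Lemma~\ref{Lemma:gainpowerofT}). So you should not insist on a $T$-independent constant in your write-up.
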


\noindent Propositions \ref{Prop:Linear} and \ref{Prop:Nonhom-Linear} are both established in \cite[Propositions 2.1 and 2.3]{MR}.  We will prove the following bilinear estimate.

\begin{proposition}[Bilinear estimate]  For any $0<\varepsilon<\frac{1}{16}$, if $s\geq -\frac{1}{2}-\varepsilon$, $0<\gamma\leq \varepsilon$, we have the following estimate
\begin{align}
\|(uv)_{x}\|_{X^{s,-\frac{1}{2}+\gamma}_{T}}
\leq \|u\|_{X^{s,\frac{1}{2}-\varepsilon}_{T}}\|v\|_{X^{s,\frac{1}{2}-\varepsilon}_{T}}
\label{Eqn:bilinear}
\end{align}
for all $u, v \in X^{s,\frac{1}{2}-\varepsilon}_{T}$.
\label{Prop:bilinear}
\end{proposition}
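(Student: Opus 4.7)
The plan is to reduce \eqref{Eqn:bilinear} to a weighted trilinear convolution estimate on the Fourier side and carry out a dyadic case analysis. Taking extensions of $u,v$ off $[0,T]$ whose full-line $X^{s,1/2-\varepsilon}$ norms are comparable to their $X^{s,1/2-\varepsilon}_T$ norms and invoking Parseval-duality (recall $(X^{s,b})^{*}=X^{-s,-b}$), the target inequality is equivalent to
\begin{align*}
\sum_{\substack{n=n_1+n_2\\ n\neq 0}}\iint \frac{|n|\,\langle n\rangle^{s}\, F(n_1,\tau_1)\,G(n-n_1,\tau-\tau_1)\,H(n,\tau)}{\langle n_1\rangle^{s}\langle n_2\rangle^{s}\,\langle\sigma\rangle^{1/2-\gamma}\langle\sigma_1\rangle^{1/2-\varepsilon}\langle\sigma_2\rangle^{1/2-\varepsilon}}\,d\tau_1\, d\tau \;\lesssim\; \|F\|_{2}\|G\|_{2}\|H\|_{2},
\end{align*}
where $\sigma:=i(\tau-n^{3})+n^{2}$, $\sigma_j:=i(\tau_j-n_j^{3})+n_j^{2}$ with $\tau_2:=\tau-\tau_1$, and $F,G,H\ge 0$. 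The decisive algebraic identity, obtained from $n=n_1+n_2$ together with $n^{3}-n_1^{3}-n_2^{3}=3n n_1 n_2$ and $n^{2}-n_1^{2}-n_2^{2}=2n_1 n_2$, is
\begin{align*}
\sigma-\sigma_1-\sigma_2 \;=\; (2-3in)\, n_1 n_2,
\end{align*}
giving the KdV resonance bound $\max(|\sigma|,|\sigma_1|,|\sigma_2|)\gtrsim|n||n_1||n_2|$. Crucially, unlike pure KdV, the KdV-Burgers weight also satisfies $\langle\sigma\rangle\gtrsim\langle n\rangle^{2}$ and $\langle\sigma_j\rangle\gtrsim\langle n_j\rangle^{2}$, providing a second, purely dissipative, smoothing mechanism.

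Next I would dyadically decompose $|n|\sim N$, $|n_j|\sim N_j$, $|\sigma|\sim L$, $|\sigma_j|\sim L_j$, and split into branches according to which of $L, L_1, L_2$ realizes the resonance bound, and according to the frequency interaction type: the high-high-to-low regime $N_1\sim N_2\gg N$ (where the spatial factor $\langle n_1\rangle^{-s}\langle n_2\rangle^{-s}/\langle n\rangle^{-s}$ is unfavorable, since $s<0$), the high-low regime $N\sim N_{\max}\gg N_{\min}$ (where the $|n|$ from the derivative must be recouped from the resonance), and the comparable case. In every branch I would apply Cauchy-Schwarz in $(n,\tau)$ to peel $H$ off the convolution of $F$ and $G$, and bound the remaining $(n_1,\tau_1)$-double integral by the elementary lemma $\int\langle\tau-a\rangle^{-\beta}\langle\tau-c\rangle^{-\beta'}d\tau\lesssim\langle a-c\rangle^{1-\beta-\beta'}$ (valid for $\beta+\beta'>1$), with routine substitutes when $\beta,\beta'\leq 1/2$.

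The principal obstacle is that both exponents $\frac{1}{2}-\varepsilon$ and $\frac{1}{2}-\gamma$ fall below $1/2$, which is precisely the scaling that destroys the corresponding estimate for pure KdV below $s=-\frac{1}{2}$. My remedy is to interpolate the dispersive and dissipative pieces of the weight: writing $\langle\sigma_j\rangle^{1/2-\varepsilon}\ge\langle\sigma_j\rangle^{1/2-\varepsilon-\theta}\langle n_j\rangle^{2\theta}$ for a suitable $0<\theta<\frac{1}{2}-\varepsilon$ trades temporal weight for spatial gain, simultaneously restoring the hypothesis of the $\tau$-integration lemma and absorbing the spatial deficit $\langle n\rangle^{1+s}\langle n_1\rangle^{-s}\langle n_2\rangle^{-s}$ of the ``bad'' high-high-to-low regime. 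The arithmetic closes because the total deficit is bounded by $O(\varepsilon)$ (using $s\ge -\frac{1}{2}-\varepsilon$ and $\gamma\le\varepsilon$), which the hypothesis $\varepsilon<\frac{1}{16}$ accommodates with room to spare. I expect the tightest subcase to be the resonant high-high-to-low branch where $L$ dominates and the full strength $|n||n_1||n_2|$ from $\langle\sigma\rangle^{1/2-\gamma}$ must be extracted; there the constraint $\gamma\le\varepsilon$ is used to channel enough of the resonance into spatial regularity.
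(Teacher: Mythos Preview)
Your proposal is correct and follows essentially the same strategy as the paper's proof: pass to the Fourier side by duality, exploit the resonance identity $\max(|\sigma|,|\sigma_1|,|\sigma_2|)\gtrsim |n n_1 n_2|$ in tandem with the dissipative lower bound $\langle\sigma_j\rangle\gtrsim\langle n_j\rangle^{2}$, split according to which modulation dominates, then apply Cauchy--Schwarz and the $\tau$-integration lemma. The paper organizes this slightly more compactly---rather than a full dyadic decomposition in $L,L_1,L_2$, it simply splits into the three regions $A_i=\{|\sigma_i|=\max\}$ and packages the spatial-weight arithmetic into a single kernel bound $\frac{|n|\langle n\rangle^{s}}{\langle n_1\rangle^{s}\langle n_2\rangle^{s}\langle n n_1 n_2\rangle^{1/2-\varepsilon}}\lesssim |N^{1}|^{4\varepsilon}$, which is then absorbed via $\langle\sigma_j\rangle^{-2\varepsilon}\lesssim |N^{1}|^{-4\varepsilon}$ (your ``interpolation'' trick with $\theta=2\varepsilon$).

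The one point you gloss over is what happens \emph{after} the $\tau_1$-integration: you are left with a discrete sum $\sum_{n_1\neq 0,\,n_1\neq n}\langle\tau-n^{3}+3n n_1(n-n_1)\rangle^{-(1-8\varepsilon)}$, which is not handled by the continuous $\tau$-lemma you cite. This requires the Kenig--Ponce--Vega counting lemma (that $\sup_{\mu,n}\sum_{n_1}\langle\mu-n_1(n-n_1)\rangle^{-\delta}<\infty$ for $\delta>\tfrac{1}{2}$), and the condition $1-8\varepsilon>\tfrac{1}{2}$ is precisely $\varepsilon<\tfrac{1}{16}$. So the numerical threshold is not merely ``room to spare'' in an $O(\varepsilon)$ budget; it is the exact point at which this discrete sum diverges.
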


\noindent The novelty of Proposition \ref{Prop:bilinear} is the use of temporal regularity $b=\frac{1}{2}-\varepsilon <\frac{1}{2}$ (on the right-hand side).  Recall that for the $X^{s,b}$ space adapted to the KdV equation, this estimate requires $b\geq\frac{1}{2}$.  This modification relies on the combination of dispersion and dissipation in the KdV-Burgers propagator.  Specifically, we will benefit from the algebraic identity identified of Bourgain for the KdV equation, $\max(\tau-n^{3},\tau_{1}-n_{1}^{3},\tau_{2}-n_{2}^{3})\geq n n_{1}n_{2}$ (dispersion), but we will also exploit the lower bound on the KdV-Burgers weight $|i(\tau_{j}-n_{j}^{3})+n_{j}^{2}|\geq n_{j}^{2}$ (dissipation) nearby the dispersive hypersurface $\tau_{j}= n_{j}^{3}$.  The proof of Proposition \ref{Prop:bilinear} is included in the appendix.

Our fixed point argument will take place on a unit ball in $X^{s,b}_{T}$ centered at $S(t)u_{0}+\Phi(t)$, and thus we will require that $S(t)u_{0}+\Phi(t)\in X^{s,b}_{T}$, almost surely.  The linear evolution $S(t)u_{0}$ lives in $X^{s,b}_{T}$ according to Proposition \ref{Prop:Linear}.  To show that the stochastic convolution $\Phi(t)$ is almost surely an element of $X^{s,b}_{T}$, we compute

\begin{proposition}[Stochastic convolution estimate] Let $\phi\in HS(L^{2};H^{s+2b})$ of the form $(\ref{Eqn:phi-diagonal})$.
Given $b<\frac{1}{2}$ and $T>0$, we have
\begin{align*}
\mathbb{E}\Big(\|\Phi\|^{2}
_{X^{s,b}_{T}}\Big) \lesssim T\|\phi\|_{H^{s+2b}}^{2}.
\end{align*}
\label{Prop:EstofStochConv}
\end{proposition}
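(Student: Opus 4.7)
The plan is to truncate $\Phi$ in time and compute the space-time Fourier transform of the truncation explicitly via a stochastic Fubini argument, then close the estimate using It\^{o}'s isometry. First, fix a smooth cutoff $\psi\in C_c^\infty(\mathbb{R})$ with $\psi\equiv 1$ on $[0,1]$ and $\operatorname{supp}\psi\subset[-1,2]$, and set $\psi_T(t)=\psi(t/T)$. Since $\psi_T\Phi$ agrees with $\Phi$ on $[0,T]$, the definition of $X^{s,b}_T$ gives $\|\Phi\|_{X^{s,b}_T}\leq \|\psi_T\Phi\|_{X^{s,b}}$, so it suffices to bound $\mathbb{E}\|\psi_T\Phi\|_{X^{s,b}}^2$.

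For each $n\neq 0$, the $n$th spatial Fourier coefficient of $\Phi$ is $\widehat{\Phi}(t,n)=in\phi_n\int_0^t e^{(-n^2+in^3)(t-t')}\,dB_n(t')$. Taking the temporal Fourier transform of $\psi_T(t)\widehat{\Phi}(t,n)$ and swapping the stochastic and Lebesgue integrals (with the substitution $s=t-t'$) produces
\[
\widetilde{\psi_T\Phi}(\tau,n)=in\phi_n\int_0^\infty e^{-i\tau t'}K_n(\tau,t')\,dB_n(t'),\qquad K_n(\tau,t'):=\int_0^\infty \psi_T(t'+s)\,e^{-\sigma s}\,ds,
\]
where $\sigma := n^2+i(\tau-n^3)$ is precisely the weight appearing in the $X^{s,b}$ norm. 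It\^{o}'s isometry then gives $\mathbb{E}|\widetilde{\psi_T\Phi}(\tau,n)|^2 = n^2|\phi_n|^2 \int_0^\infty |K_n(\tau,t')|^2\,dt'$, and Fubini reduces the proposition to proving the bound
\[
I_n:=\int_0^\infty\!\!\int_\mathbb{R}\langle\sigma\rangle^{2b}|K_n(\tau,t')|^2\,d\tau\,dt'\lesssim T\langle n\rangle^{4b-2}\qquad (n\neq 0),
\]
since summation in $n$ then yields $\sum_n \langle n\rangle^{2s} n^2|\phi_n|^2 I_n \lesssim T\sum_n\langle n\rangle^{2s+4b}|\phi_n|^2 = T\|\phi\|_{H^{s+2b}}^2$.

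To bound $I_n$, substitute $\xi=\tau-n^3$ and observe that $K_n$ is the Fourier transform at $\xi$ of $g_{n,t'}(s):=\psi_T(t'+s)\,e^{-n^2 s}\,\mathbf{1}_{\{s\ge 0\}}$. The elementary inequality $\langle\sigma\rangle^{2b}\lesssim n^{4b}+\langle\xi\rangle^{2b}$ combined with Plancherel reduces matters to bounding $n^{4b}\|g_{n,t'}\|_{L^2}^2+\|g_{n,t'}\|_{H^b}^2$ integrated in $t'\ge 0$. The $L^2$ piece is immediate from Fubini and the Gaussian decay of the dissipative semigroup: $\int_0^\infty \|g_{n,t'}\|_{L^2}^2\,dt' \le \|\psi_T\|_{L^2}^2 \int_0^\infty e^{-2n^2 s}\,ds \lesssim T/n^2$, producing the required contribution of order $Tn^{4b-2}$. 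For the $H^b$ piece, integration by parts in $s$ gives $K_n(\tau,t')=\psi_T(t')/\sigma + \sigma^{-1}\int_0^\infty \psi_T'(t'+s)\,e^{-\sigma s}\,ds$; the boundary term dominates and contributes $|\psi_T(t')|^2\int(1+\xi^2)^b/(n^4+\xi^2)\,d\xi$, which by the scaling $\xi=n^2 y$ is $\lesssim |\psi_T(t')|^2 n^{4b-2}/(1-2b)$, integrating in $t'$ to again give $Tn^{4b-2}$; the remainder term from integration by parts gains an extra factor $1/(Tn^2)$ from $\|\psi_T'\|_\infty\lesssim 1/T$ together with the $e^{-n^2 s}$ decay, and is negligible.

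The main obstacle, and the reason the hypothesis $b<1/2$ is essential, is the convergence at $|\xi|=\infty$ of the integral $\int(1+\xi^2)^b/(n^4+\xi^2)\,d\xi$: its integrand behaves like $|\xi|^{2b-2}$, forcing $b<1/2$ for integrability. Equivalently, the indicator $\mathbf{1}_{\{s\ge 0\}}$ in $g_{n,t'}$---inherited from the Duhamel representation of the stochastic convolution---creates a jump discontinuity at $s=0$ whenever $\psi_T(t')\neq 0$, and such a function lies in $H^b$ precisely when $b<1/2$. The interplay of this scaling with the $e^{-n^2 s}$ dissipation is what produces the gain of $2b$ derivatives in the final estimate, reflecting the smoothing effect of the dissipative semigroup encoded in $\sigma$.
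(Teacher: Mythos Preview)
Your argument is correct in outline and closely parallels the paper's, the key difference being the cutoff. The paper exploits the equivalence $\|u\|_{X^{s,b}_T}\sim\|\chi_{[0,T]}u\|_{X^{s,b}}$ (valid for $b<\tfrac12$), inserts a second cutoff $\chi_{[0,T]}(t')$ on the noise variable, and then \emph{enlarges} the outer cutoff from $\chi_{[0,T]}(t)$ to $\chi_{[0,\infty)}(t)$ (this is an inequality because sharp time cutoffs act boundedly on $X^{s,b}$ when $b<\tfrac12$). With $t$ ranging over $[t',\infty)$ the Laplace integral is explicit, so one obtains the exact kernel $K_n=1/\sigma$ and the estimate reduces immediately to $T\sum_n\langle n\rangle^{2s}n^2|\phi_n|^2\int \langle\sigma\rangle^{2b}/|\sigma|^2\,d\tilde\tau$, with no integration by parts and no remainder. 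Your smooth cutoff $\psi_T$ on the outer variable avoids invoking that equivalence but forces you to extract the same leading term $\psi_T(t')/\sigma$ by hand; the end result is the same integral.

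One point to repair: your remainder bound does not quite deliver the stated linear-in-$T$ estimate. Using $\|\psi_T'\|_\infty\lesssim 1/T$ together with $\int_0^\infty e^{-n^2s}\,ds=n^{-2}$ gives $|R|\lesssim (T n^2|\sigma|)^{-1}$, and after integrating $\langle\xi\rangle^{2b}|R|^2$ in $\xi$ and then in $t'\in[0,2T]$ one picks up $T^{-1}n^{4b-6}$, so the final bound is $(T+T^{-1})\|\phi\|_{H^{s+2b}}^2$ rather than $T\|\phi\|_{H^{s+2b}}^2$. This matters because the proposition is applied later on short subintervals of length $\delta\to 0$. The fix is immediate: bound the remainder instead by $|R|\le |\sigma|^{-1}\int_0^\infty|\psi_T'(t'+s)|\,ds\le |\sigma|^{-1}\|\psi_T'\|_{L^1}\lesssim |\sigma|^{-1}$, which puts the remainder on the same footing as the boundary term and restores the $T n^{4b-2}$ bound after integrating over $t'\in[0,2T]$.
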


\noindent According to Proposition \ref{Prop:EstofStochConv}, if $\phi \in HS(L^{s},H^{s+2b})$, then $\Phi(t)\in X^{s,b}_{T}$ almost surely.  Recall that placing $\phi \in HS(L^{s},H^{s+2b})$ corresponds to smoothing the additive noise in \eqref{Eqn:SDKDV} by $s+2b+\frac{1}{2}$ spatial derivatives.  The primary observation of this paper is that, with the combination of dispersion and dissipation, we can obtain the bilinear estimate needed for well-posedness of the KdV-Burgers equation (Proposition \ref{Prop:bilinear}) in a function space ($X^{s,b}_{T_{\omega}}$) with the regularity of white noise in space ($s<-\frac{1}{2}$) and low regularity in time ($b<\frac{1}{2}$).  Then, using Proposition \ref{Prop:EstofStochConv}, we can prove LWP of \eqref{Eqn:SDKDV} with $\phi \in HS(L^{2},H^{s+2b})$,
and we have smoothed the additive noise by $s+2b +\frac{1}{2}<1$
spatial derivatives.  The proof of Proposition \ref{Prop:EstofStochConv} is found in the next subsection.

Using $b<\frac{1}{2}$ requires that we justify continuity of the solution with a separate argument (it is not automatic from the fixed point method).  To establish continuity of the nonlinear part of the solution, we appeal to the following Proposition of \cite{MR}.

\begin{proposition}[Molinet-Ribaud \cite{MR}, Continuity of the Duhamel map] Let  $s\in \mathbb{R}$ and $\gamma>0$. For all $f\in X^{s,-\frac{1}{2}+\gamma}$,
\begin{align}
t\longmapsto \int_{0}^{t}S(t-t')f(t')dt'
\in  C(\mathbb{R}_{+},H^{s+2\gamma}).
\end{align}
Moreover, if $\{f_{n}\}$ is a sequence with $f_{n}\rightarrow 0$ in
$X^{s,\frac{1}{2}+\gamma}$ as $n\rightarrow \infty$, then
\begin{align}
\bigg\|\int_{0}^{t}S(t-t')f_{n}(t')dt'\bigg\|_{L^{\infty}
(\mathbb{R}_{+},H^{s+2\gamma})}  \longrightarrow 0,
\end{align}
as $n\rightarrow \infty$.
\label{Prop:ContofNonlin}
\end{proposition}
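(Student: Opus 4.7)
The plan is to reduce both claims to a single uniform-in-time Fourier-side estimate
\[
\sup_{t \geq 0} \Big\| \int_0^t S(t-t') f(t')\, dt' \Big\|_{H^{s+2\gamma}} \lesssim \|f\|_{X^{s,-\frac{1}{2}+\gamma}},
\]
and then extract the continuity and convergence statements by standard dominated-convergence arguments in the Fourier variables. The $2\gamma$-derivative gain is parabolic in origin, stemming from the $n^2$-piece of the symbol $e^{-n^2 t + in^3 t}$, so it will not come out of a purely dispersive argument.

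First I would compute the $n$-th Fourier coefficient of $u(t) := \int_0^t S(t-t')f(t')\,dt'$ explicitly. Writing $\hat f(t,n) = \int_{\mathbb{R}} e^{it\tau}\tilde f(n,\tau)\,d\tau$ and setting $\mu_n := n^2 - in^3$, direct integration in $t'$ gives, for $n \neq 0$,
\[
\hat u(t,n) = \int_{\mathbb{R}} \tilde f(n,\tau)\, \frac{e^{it\tau} - e^{-\mu_n t}}{i(\tau - n^3) + n^2}\, d\tau.
\]
Set $z(n,\tau) := i(\tau-n^3) + n^2$ so that $|z|^2 = n^4 + (\tau-n^3)^2$ and $|z| \geq n^2 \geq 1$. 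Since $|e^{it\tau}| = 1$ and $|e^{-\mu_n t}| = e^{-n^2 t} \leq 1$ for $t \geq 0$, the integrand above is dominated by $2|\tilde f(n,\tau)|/|z|$.

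Next, apply Cauchy-Schwarz in $\tau$ against the weight $|z|^{-1+2\gamma}$:
\[
|\hat u(t,n)|^2 \lesssim \Big( \int_{\mathbb{R}} \frac{d\tau}{|z|^{1+2\gamma}} \Big) \Big( \int_{\mathbb{R}} |\tilde f(n,\tau)|^2\, |z|^{-1+2\gamma}\, d\tau \Big).
\]
Via the substitution $\tau - n^3 = n^2 \sigma$, the first factor equals
\[
n^{-4\gamma} \int_{\mathbb{R}} \frac{d\sigma}{(1+\sigma^2)^{\frac{1}{2}+\gamma}} \lesssim_{\gamma} \langle n \rangle^{-4\gamma},
\]
which converges because $\gamma > 0$. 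Multiplying through by $\langle n \rangle^{2(s+2\gamma)}$ and summing in $n$, the $\langle n \rangle^{-4\gamma}$ factor absorbs the $2\gamma$ extra derivatives exactly, and the remaining sum reconstructs the $X^{s,-\frac{1}{2}+\gamma}$ norm of $f$, yielding the desired uniform estimate.

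Finally, for continuity, I would fix $t_k \to t$ in $\mathbb{R}_+$. For each $n$, the integrand defining $\hat u(\cdot, n)$ is continuous in $t$ and dominated by $2|\tilde f(n,\tau)|/|z|$, which is $\tau$-integrable by the Cauchy-Schwarz computation above, so dominated convergence gives $\hat u(t_k, n) \to \hat u(t, n)$. The uniform pointwise estimate from the previous paragraph then controls $\langle n \rangle^{2(s+2\gamma)}|\hat u(t_k,n) - \hat u(t,n)|^2$ by a sequence summable in $n$ independently of $k$, and a second application of dominated convergence gives $u(t_k) \to u(t)$ in $H^{s+2\gamma}$. The second assertion of the proposition is then immediate from linearity of the Duhamel map and the uniform estimate, since $X^{s,\frac{1}{2}+\gamma} \hookrightarrow X^{s,-\frac{1}{2}+\gamma}$. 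The main subtlety throughout is ensuring that the $\tau$-integral delivers precisely the $\langle n \rangle^{-4\gamma}$ parabolic gain; without the dissipative term (i.e., if $z$ were replaced by $i(\tau-n^3)$) the $\tau$-integral would diverge near $\tau = n^3$ and the $2\gamma$ derivative smoothing would be unavailable.
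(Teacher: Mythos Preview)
Your argument is correct. The explicit Fourier formula for $\hat u(t,n)$ is right, the Cauchy--Schwarz splitting $|z|^{-2} = |z|^{-(1+2\gamma)}|z|^{-(1-2\gamma)}$ produces exactly the parabolic gain $\langle n\rangle^{-4\gamma}$ via the scaling $\tau-n^3 = n^2\sigma$, and the two layers of dominated convergence (first in $\tau$, then in $n$) are justified by the uniform $L^1_\tau$ and $\ell^1_n$ majorants you exhibit. The second assertion then follows immediately from the uniform estimate and the trivial embedding $X^{s,\frac{1}{2}+\gamma}\hookrightarrow X^{s,-\frac{1}{2}+\gamma}$.

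There is nothing in the paper to compare against: the proposition is quoted from Molinet--Ribaud \cite{MR} and no proof is given here. Your direct approach---computing the Duhamel integral on the Fourier side and extracting the $2\gamma$-derivative smoothing from the dissipative part of the symbol via a weighted Cauchy--Schwarz---is the natural one, and is in the same spirit as the linear and inhomogeneous estimates (Propositions~\ref{Prop:Linear} and~\ref{Prop:Nonhom-Linear}) that the paper also imports from \cite{MR}. One minor remark: you implicitly use $n\neq 0$ (so that $|z|\geq n^2\geq 1$ and $\mu_n+i\tau\neq 0$), which is consistent with the paper's standing mean-zero assumption.
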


\noindent For continuity of the stochastic convolution, we use a probabilistic argument to establish the following Proposition.

\begin{proposition}[Continuity of the stochastic convolution] Let $\phi\in HS(L^{2},H^{s+2\alpha})$ of the form \eqref{Eqn:phi-diagonal},
for some $\alpha\in(0,1/2)$.
Then for any $T>0$, $\displaystyle \Phi \in C([0,T];H^{s}(\mathbb{T}))$ almost surely.
\label{Prop:ContofStochConv}
\end{proposition}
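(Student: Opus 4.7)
The plan is to verify Kolmogorov's continuity criterion for the process $\Phi : [0,T] \to H^s(\mathbb{T})$. Because each Fourier mode $\widehat{\Phi}(n,\cdot)$ is a complex Gaussian process and these processes are mutually independent across $n$, the increment $\Phi(t) - \Phi(r)$ is a Gaussian element of the separable Hilbert space $H^s(\mathbb{T})$. Fernique's theorem (equivalently, the equivalence of Gaussian moments in a Hilbert space) therefore gives, for every $p \geq 1$,
\[
\mathbb{E}\|\Phi(t)-\Phi(r)\|_{H^s}^{2p} \lesssim_p \big(\mathbb{E}\|\Phi(t)-\Phi(r)\|_{H^s}^2\big)^p.
\]
So the task reduces to proving a second-moment estimate $\mathbb{E}\|\Phi(t)-\Phi(r)\|_{H^s}^2 \leq C_T |t-r|^\beta \|\phi\|_{H^{s+2\alpha}}^2$ for some $\beta > 0$; choosing $p$ with $p\beta > 1$ and invoking Kolmogorov then yields almost sure (Hölder) continuity of $\Phi$.

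For the second moment, I would split, for $0 \leq r < t \leq T$,
\[
\Phi(t)-\Phi(r) = \int_0^r \big(S(t-\sigma)-S(r-\sigma)\big)\phi\partial_x dW(\sigma) + \int_r^t S(t-\sigma)\phi\partial_x dW(\sigma),
\]
apply the It\^{o} isometry mode by mode, and use $\widehat{S(t)}(n) = e^{(-n^2+in^3)t}$. The crucial observation is that the $n^2$ factor from $\partial_x$ in the noise is absorbed by the $1/(2n^2)$ arising from $\int_0^r e^{-2n^2(r-\sigma)}\,d\sigma$, leaving
\[
\mathbb{E}\|\Phi(t)-\Phi(r)\|_{H^s}^2 \lesssim \sum_{n\neq 0} \langle n\rangle^{2s}|\phi_n|^2 \Big(|e^{(-n^2+in^3)(t-r)}-1|^2 + \big(1-e^{-2n^2(t-r)}\big)\Big).
\]
I then interpolate the elementary inequalities $|e^{-z}-1|^2 \leq \min(1,|z|^2)$ (for $\mathrm{Re}\,z \geq 0$) and $1-e^{-x} \leq \min(1,x)$ (for $x \geq 0$), obtaining for any $\theta \in [0,1]$ the bounds $|e^{(-n^2+in^3)(t-r)}-1|^2 \lesssim \langle n\rangle^{6\theta_1}|t-r|^{2\theta_1}$ and $1-e^{-2n^2(t-r)} \lesssim \langle n\rangle^{2\theta_2}|t-r|^{\theta_2}$. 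Taking $\theta_1 = 2\alpha/3$ and $\theta_2 = 2\alpha$ (both legal since $\alpha < 1/2$) uses up exactly the regularity budget $\phi \in H^{s+2\alpha}$ and produces $\beta = 4\alpha/3 > 0$.

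The principal subtlety to watch out for is the interplay between the derivative in $\phi \partial_x dW$ and the combined dispersion/dissipation in the propagator. The first (``old'') contribution carries the oscillatory factor $e^{in^3(t-r)}$, so each unit of Hölder exponent in time costs $\langle n\rangle^{3\theta_1}$ on the $\phi$-side; this is what forces the smaller choice $\theta_1 = 2\alpha/3$ and sets the overall Hölder exponent $\beta = 4\alpha/3$. The only remaining item is the hypercontractive moment equivalence for a Gaussian in $H^s$, which is standard (Fernique); as a fallback, one could obtain the higher moments directly by running Kolmogorov scalar-by-scalar on each Fourier mode and summing via Minkowski's inequality, at the cost of slightly more bookkeeping but with the same Hölder exponent.
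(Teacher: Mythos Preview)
Your argument is correct, and it follows a genuinely different route from the paper. The paper uses the factorization method of Da Prato: it writes
\[
\Phi(t)=\frac{\sin(\pi\alpha)}{\pi}\int_{0}^{t}S(t-t')(t-t')^{\alpha-1}Y(t')\,dt',\qquad Y(t')=\int_{0}^{t'}S(t'-r)(t'-r)^{-\alpha}\phi\partial_{x}\,dW(r),
\]
and then appeals to a deterministic lemma (if $Y\in L^{2m}([0,T];H^{s})$ then the integral above is in $C([0,T];H^{s})$), reducing the problem to showing $\mathbb{E}\|Y(t')\|_{H^{s}}^{2m}<\infty$. The key It\^{o}-isometry computation there, namely $\int_{0}^{t'}e^{-2n^{2}(t'-r)}(t'-r)^{-2\alpha}\,dr\lesssim n^{-2(1-2\alpha)}$, plays exactly the role your dissipative cancellation $n^{2}\cdot\frac{1}{2n^{2}}$ plays. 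By contrast, you bound the increment moments directly and invoke Kolmogorov, leaning on the Gaussianity of $\Phi(t)-\Phi(r)$ (deterministic integrand) to pass from second to $2p$-th moments via Fernique. Your approach is more elementary and yields an explicit H\"older exponent (up to $2\alpha/3-$), the limitation coming, as you note, from the dispersive factor $e^{in^{3}h}$; the factorization method is the more standard SPDE tool and would survive without the Gaussian moment equivalence, but gives no H\"older information beyond continuity. Both arguments consume the same regularity budget $\phi\in H^{s+2\alpha}$ in the same way.
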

The proof of Proposition \ref{Prop:ContofStochConv} can be found in the next subsection.  The proof of Theorem \ref{Thm:Contraction} will also require the following Lemma concerning the $X^{s,b}_{T}$ spaces, which allows us to gain a small power of $T$ by raising the temporal exponent $b$.

\begin{lemma}
Let $0<b<\frac{1}{2}$, $s\in\mathbb{R}$, then
\[ \|u\|_{X^{s,b}_{T}}\lesssim T^{\frac{1}{2}-b-}\|u\|_{X^{s,\frac{1}{2}}_{T}}.\]
\label{Lemma:gainpowerofT}
\end{lemma}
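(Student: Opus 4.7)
The plan is to reduce the restricted-norm estimate to a global time-cutoff estimate on $\mathbb{R}\times\mathbb{T}$, then invoke a standard Bourgain-space cutoff lemma, paying an $\varepsilon$-loss in order to access the endpoint $b' = \tfrac{1}{2}$ on the right-hand side.

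First I would fix a bump $\psi \in C_c^\infty(\mathbb{R})$ with $\psi \equiv 1$ on $[0,1]$ and supported in $(-1, 2)$, and set $\psi_T(t) := \psi(t/T)$, so that $\psi_T \equiv 1$ on $[0, T]$. For any extension $\tilde u$ of $u$ from $[0, T]$ to $\mathbb{R}\times\mathbb{T}$ with $\|\tilde u\|_{X^{s, 1/2}} \leq 2\|u\|_{X^{s, 1/2}_T}$, the product $\psi_T \tilde u$ is still an extension of $u$, so by the definition of the restricted norm
\[ \|u\|_{X^{s, b}_T} \leq \|\psi_T \tilde u\|_{X^{s, b}(\mathbb{R}\times\mathbb{T})}. \]
It thus suffices to prove the global estimate
\[ \|\psi_T v\|_{X^{s, b}} \lesssim T^{\frac{1}{2} - b - \varepsilon'} \|v\|_{X^{s, 1/2}} \]
for arbitrarily small $\varepsilon' > 0$ and all $v \in X^{s, 1/2}$.

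For this I would invoke the standard time-localization estimate for Bourgain spaces: for $-\tfrac{1}{2} < b < b' < \tfrac{1}{2}$ and $v \in X^{s, b'}$,
\[ \|\psi_T v\|_{X^{s, b}} \lesssim T^{b' - b} \|v\|_{X^{s, b'}}. \]
This is well known (cf.\ the time-localization estimates in \cite{MR}, and the analogous general cutoff estimates in Tao's textbook on dispersive equations). The proof is a direct Fourier-side computation: multiplication by $\psi_T$ corresponds to convolution in $\tau$ with the kernel $\widehat{\psi_T}(\tau) = T\hat\psi(T\tau)$, which has unit $L^1_\tau$-norm and is essentially concentrated on the scale $|\tau| \lesssim 1/T$. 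The gap $b' - b > 0$ in the modulation weight $\langle i(\tau - n^3) + n^2\rangle$ then converts this $1/T$ localization into the factor $T^{b' - b}$, after splitting according to whether the modulation is above or below the threshold $1/T$; note that replacing the standard dispersive weight by the KdV-Burgers weight is immaterial here, since only the behavior under $\tau$-convolution matters.

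Applying the time-localization estimate with $b' = \tfrac{1}{2} - \varepsilon'$ (which is admissible since $b < \tfrac{1}{2}$) and using the trivial embedding $\|v\|_{X^{s, 1/2 - \varepsilon'}} \leq \|v\|_{X^{s, 1/2}}$ produces the required global bound, and hence the lemma. The only non-trivial ingredient is the time-localization estimate itself, whose endpoint $b' = \tfrac{1}{2}$ genuinely fails; this is exactly the source of the $-$ in the exponent $\tfrac{1}{2} - b -$ in the statement, so the $\varepsilon$-loss cannot be avoided by this method.
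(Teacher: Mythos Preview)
Your argument is correct, and the overall mechanism---gaining a power of $T$ from the Fourier-side convolution with a time cutoff---is the same as in the paper. The execution, however, differs. The paper works with the \emph{sharp} cutoff $\chi_{[0,T]}$, first invoking the equivalence $\|u\|_{X^{s,b}_T}\sim\|\chi_{[0,T]}u\|_{X^{s,b}}$ for $b<\tfrac12$ (established just before the lemma), then interpolating
\[
\|\chi_{[0,T]}u\|_{X^{s,b}}\lesssim\|\chi_{[0,T]}u\|_{X^{s,0}}^{1-2b}\|\chi_{[0,T]}u\|_{X^{s,1/2}}^{2b},
\]
and finally proving $\|\chi_{[0,T]}u\|_{X^{s,0}}\lesssim T^{1/2-}\|w\|_{X^{s,1/2}}$ directly via Young and H\"older, using $\|\widehat{\chi_{[0,T]}}\|_{L^{2-}_\tau}\sim T^{1/2-}$. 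You instead use a smooth cutoff $\psi_T$, bypass the interpolation step, and invoke the time-localization lemma $\|\psi_T v\|_{X^{s,b}}\lesssim T^{b'-b}\|v\|_{X^{s,b'}}$ as a black box with $b'=\tfrac12-\varepsilon'$. Your route is cleaner to state but less self-contained; the paper's route is elementary and gives the argument in full. Your remark that the KdV--Burgers weight $\langle i(\tau-n^3)+n^2\rangle$ behaves just like $\langle\tau-n^3\rangle$ under $\tau$-convolution is correct (since $\langle i(\tau-n^3)+n^2\rangle\lesssim\langle i(\tau-\tau_1-n^3)+n^2\rangle\langle\tau_1\rangle$), so the standard time-localization proof does transfer; still, a one-line check of this would strengthen the write-up, since the references you cite state the lemma for the pure-dispersive weight.
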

\noindent The proof of Lemma \ref{Lemma:gainpowerofT} will be included in this subsection.  Before beginning this proof, we recall the following property of the $X^{s,b}_{T}$ spaces, to be exploited throughout this paper.  For any $b<\frac{1}{2}$, letting $\chi_{[0,T]}$ denote the characteristic function of the interval $[0,T]$, we have
\begin{align}
\|u\|_{X^{s,b}_{T}} \sim  \|\chi_{[0,T]}u\|_{X^{s,b}}.
\label{Eqn:cutoff}
\end{align}
We proceed with the justification of \eqref{Eqn:cutoff}.  First, we show that $\displaystyle\|u\|_{X^{s,b}_{T}}\lesssim \|\chi_{[0,T]}u\|_{X^{s,b}}$.
From the definition of the $X^{s,b}_{T}$ norm, and the fact that $u=\chi_{[0,T]}u$ on $[0,T]$, it suffices to demonstrate that, for $b<\frac{1}{2}$, if $u\in X^{s,b}$ then $\chi_{[0,T]}u \in X^{s,b}$.  We find
\begin{align}
\|\chi_{[0,T]}u\|_{X^{s,b}} &= \|\langle n \rangle^{s}
\langle i(\tau- n^{3}) + n^{2}\rangle^{b}(\chi_{[0,T]}u)^{\sim}(n,\tau)\|_{L^{2}_{n,\tau}}
\notag  \\
&\sim  \|\langle n \rangle^{s}
\langle \tau- n^{3}\rangle^{b}(\chi_{[0,T]}u)^{\sim}(n,\tau)\|_{L^{2}_{n,\tau}} \notag \\
&\ \ \ \ \ \ \ \ \ \ \ \ + \|\langle n \rangle^{s+2b}
(\chi_{[0,T]}u)^{\sim}(n,\tau)\|_{L^{2}_{n,\tau}}.
\label{Eqn:lemma1}
\end{align}
Using boundedness of multiplication by a cutoff function in $H^{b}_{t}$ for $b<\frac{1}{2}$, we have
\begin{align}
\|\langle n \rangle^{s}
\langle \tau- n^{3}\rangle^{b}(\chi_{[0,T]}u)^{\sim}(n,\tau)\|_{L^{2}_{n,\tau}}
&= \|e^{-it\partial_{x}^{3}}\chi_{[0,T]}u\|_{H^{s}_{x}H^{b}_{t}}
 \notag \\
&= \|\chi_{[0,T]}e^{-it\partial_{x}^{3}}u\|_{H^{s}_{x}H^{b}_{t}}
 \notag \\
&\lesssim
\|e^{-it\partial_{x}^{3}}u\|_{H^{s}_{x}H^{b}_{t}}
 \notag \\
&=
\|\langle n \rangle^{s}
\langle \tau- n^{3}\rangle^{b}\tilde{u}(n,\tau)\|_{L^{2}_{n,\tau}}
 \notag \\
&\leq
\|\langle n \rangle^{s}
\langle i(\tau- n^{3}) + n^{2}\rangle^{b}\tilde{u}(n,\tau)\|_{L^{2}_{n,\tau}}
 \notag \\
&= \|u\|_{X^{s,b}}.
\label{Eqn:lemma2}
\end{align}
Then, by Plancherel,
\begin{align}
\|\langle n \rangle^{s+2b} (\chi_{[0,T]}u)^{\sim}(n,\tau)\|_{L^{2}_{n,\tau}}
&=
\|\langle n \rangle^{s+2b}\chi_{[0,T]}\hat{u}(n,t)\|_{L^{2}_{n,t}}
\notag  \\
&\leq
\|\langle n \rangle^{s+2b}
\hat{u}(n,t)\|_{L^{2}_{n,t}}
\notag  \\
&=
\|\langle n \rangle^{s+2b}
\tilde{u}(n,\tau)\|_{L^{2}_{n,\tau}}
\notag  \\
&\leq \|u\|_{X^{s,b}}.
\label{Eqn:lemma3}
\end{align}
Combining \eqref{Eqn:lemma1}, \eqref{Eqn:lemma2} and \eqref{Eqn:lemma3}, we have
\begin{align*}
\|\chi_{[0,T]}u\|_{X^{s,b}} \lesssim \|u\|_{X^{s,b}}.
\end{align*}
To prove \eqref{Eqn:cutoff}, it remains to justify that $\displaystyle\|u\|_{X^{s,b}_{T}}\gtrsim \|\chi_{[0,T]}u\|_{X^{s,b}}$.  Let $w\in X^{s,b}_{T}$ satisfy $w\equiv u$ on $[0,T]$.  Then we have
\begin{align*}
\|\chi_{[0,T]}u\|_{X^{s,b}}=\|\chi_{[0,T]}w\|_{X^{s,b}} \lesssim \|w\|_{X^{s,b}},
\end{align*}
by the argument above.  Thus $\|w\|_{X^{s,b}} \gtrsim
\|\chi_{[0,T]}u\|_{X^{s,b}}$ for all $w$ such that $w\equiv u$ on $[0,T]$, and $\displaystyle\|u\|_{X^{s,b}_{T}}\gtrsim \|\chi_{[0,T]}u\|_{X^{s,b}}$ follows from the definition of
$X^{s,b}_{T}$.  Having established \eqref{Eqn:cutoff}, we proceed with the proof of Lemma \ref{Lemma:gainpowerofT}.

\begin{proof}[Proof of Lemma \ref{Lemma:gainpowerofT}:]
Letting $v=\chi_{[0,T]}u$, by interpolation, we have
\begin{align}
\|v\|_{X^{s,b}} \lesssim  \|v\|_{X^{s,0}}^{1-2b}\|v\|_{X^{s,\frac{1}{2}}}^{2b}.
\label{Eqn:cutoff2}
\end{align}
Next recall that $\widehat{\chi_{[0,T]}}(\tau)=T\widehat{\chi_{[0,1]}}(T\tau)$, which gives
\begin{align*}
\|\widehat{\chi_{[0,T]}}\|_{L^{q}_{\tau}} \sim T^{\frac{q-1}{q}} \|\widehat{\chi_{[0,1]}}\|_{L^{q}_{\tau}} \sim  T^{\frac{q-1}{q}}.
\end{align*}
We can therefore gain a positive power of $T$ as long as $q>1$.  Let $w\in X^{s,b}$ be any function such that $w\equiv u$ on $[0,T]$. For fixed $n$, we apply the Young and H\"{o}lder inequalities to find
\begin{align*}
\|\tilde{v}(n,\tau)\|_{L^{2}_{\tau}} &= \|\widehat{\chi_{[0,T]}}*\tilde{w}(n,\tau)\|_{L^{2}_{\tau}} \\
&\leq  \|\widehat{\chi_{[0,T]}}\|_{L^{2-}_{\tau}}\|\tilde{w}(n,\tau)\|_{L^{1+}_{\tau}} \\
&\lesssim  T^{\frac{1}{2}-}\|\langle i(\tau -n^{3})+n^{2}\rangle^{-\frac{1}{2}}\|_{L^{2+}_{\tau}} \\
&\ \ \ \ \ \ \ \ \ \ \ \|\langle i(\tau -n^{3})+n^{2}\rangle^{\frac{1}{2}}\tilde{w}(n,\tau)
\|_{L^{2}_{\tau}}  \\
&\lesssim  T^{\frac{1}{2}-}\|\langle i(\tau -n^{3})+n^{2}\rangle^{\frac{1}{2}}\tilde{w}(n,\tau)
\|_{L^{2}_{\tau}}.
\end{align*}
This gives
\begin{align}
\|v\|_{X^{s,0}} \lesssim  T^{\frac{1}{2}-}\|w\|_{X^{s,\frac{1}{2}}}.
\label{Eqn:cutoff3}
\end{align}
Combining \eqref{Eqn:cutoff}, \eqref{Eqn:cutoff2} and \eqref{Eqn:cutoff3},
\begin{align*}
\|u\|_{X^{s,b}_{T}} \lesssim  \|v\|_{X^{s,b}}
\lesssim  T^{\frac{1}{2}-b-}\|w\|_{X^{s,\frac{1}{2}}}
\end{align*}
for every $w\in X^{s.b}$ such that $w\equiv u$ on $[0,T]$.
Lemma \ref{Lemma:gainpowerofT} follows from the definition of $X^{s,b}_{T}$.
\end{proof}

The remainder of this Section is organized as follows.  In the next subsection we present the proofs of Propositions \ref{Prop:EstofStochConv} and \ref{Prop:ContofStochConv}.  Then, in subsection 2.3, we will prove Theorem \ref{Thm:Contraction} using Propositions \ref{Prop:Linear} - \ref{Prop:ContofStochConv}, and Lemma \ref{Lemma:gainpowerofT}.


\subsection{Stochastic convolution estimates}

In this subsection we present the proofs of Propositions \ref{Prop:EstofStochConv}
and \ref{Prop:ContofStochConv}.  These Propositions provide estimates on the stochastic convolution to be used in the proof of Theorem \ref{Thm:Contraction} (in the next subsection).  In particular, we will use Propositions \ref{Prop:EstofStochConv} and \ref{Prop:ContofStochConv} to prove the existence and continuity of solutions to \eqref{Eqn:SDKDV}, respectively.

\begin{proof} [Proof of Proposition \ref{Prop:EstofStochConv}]

Using \eqref{Eqn:cutoff}, $b<\frac{1}{2}$, and the Stochastic Fubini Theorem, we compute that
\begin{align*}
\|&\int_{0}^{t}S(t-t')\phi\partial_{x}dW(t')\|_{X^{s,b}_{T}} \\
&\sim
\|\chi_{[0,T]}(t)\int_{0}^{t}S(t-t')\phi\partial_{x}dW(t')\|_{X^{s,b}} \\
&=
\|\chi_{[0,T]}(t)\int_{0}^{t}S(t-t')\chi_{[0,T]}(t')\phi\partial_{x}dW(t')\|_{X^{s,b}} \\
&\lesssim
\|\chi_{[0,\infty)}(t)\int_{0}^{t}S(t-t')\chi_{[0,T]}(t')\phi\partial_{x}dW(t')\|_{X^{s,b}} \\
&= \|\langle n\rangle^{s}\langle i(\tau-n^{3}) + n^{2}\rangle^{b}\int_{0}^{\infty}e^{-it\tau}
\big(\int_{0}^{t}\chi_{[0,T]}(t')e^{-n^{2}(t-t')}e^{in^{3}(t-t')}\phi_{n}(in)dB_{n}(t')\big)dt\|_{L^{2}_{n,\tau}}
\\
&= \||n\phi_{n}|\langle n\rangle^{s}\langle i(\tau-n^{3}) + n^{2}\rangle^{b}\int_{0}^{\infty}\chi_{[0,T]}(t')
\Big(\int_{t'}^{\infty}e^{-it\tau}e^{-n^{2}(t-t')}e^{in^{3}(t-t')}dt\Big)dB_{n}(t')\|_{L^{2}_{n,\tau}} \\
&= \||n\phi_{n}|\langle n\rangle^{s}\langle i\tilde{\tau} + n^{2}\rangle^{b}\int_{0}^{T}
\Big(\int_{t'}^{\infty}e^{-it\tilde{\tau}}e^{-n^{2}(t-t')}e^{-in^{3}t'}dt\Big)dB_{n}(t')\|_{L^{2}_{n,\tilde{\tau}}},
\end{align*}
where, for each fixed $n$, we have taken $\tilde{\tau}=\tau-n^{3}$.  Now we find, for each $n$,
\begin{align*}
\int_{t'}^{\infty}e^{-it\tilde{\tau}}e^{-n^{2}(t-t')}e^{-in^{3}t'}dt &= e^{n^{2}t'}e^{-in^{3}t'}
\Big[\frac{1}{-i\tilde{\tau}-n^{2}}e^{-it\tilde{\tau}}e^{-n^{2}t}\Big]^{t=\infty}_{t=t'} \\
&= \frac{1}{i\tilde{\tau}+n^{2}}e^{-in^{3}t'}e^{-it'\tilde{\tau}}.
\end{align*}
Then, bringing the expectation inside, and applying the It\^{o} isometry,
\begin{align*}
\mathbb{E}\Big(\|\int_{0}^{t}&S(t-t')\phi\partial_{x}dW(t')\|_{X^{s,b}_{T}}^{2}\Big)
\\&\lesssim  \sum_{n}\langle n\rangle^{2s}n^{2}|\phi_{n}|^{2}\int_{-\infty}^{\infty}\langle i\tilde{\tau} + n^{2}\rangle^{2b}
\mathbb{E}\Big(\big|\int_{0}^{T}\Big(\frac{1}{i\tilde{\tau}+n^{2}}e^{-in^{3}t'}e^{-it'\tilde{\tau}}\Big)dB_{n}(t')\big|^{2}
\Big)d\tilde{\tau}  \\
&= \sum_{n}\langle n\rangle^{2s}n^{2}|\phi_{n}|^{2}\int_{-\infty}^{\infty}\langle i\tilde{\tau} + n^{2} \rangle^{2b}
\Big(\int_{0}^{T}\frac{1}{\tilde{\tau}^{2}+n^{4}}dt'\Big)d\tilde{\tau} \\
&= T\sum_{n}\langle n\rangle^{2s}n^{2}|\phi_{n}|^{2}\int_{-\infty}^{\infty}\frac{\langle i\tilde{\tau} + n^{2}\rangle^{2b}}
{\tilde{\tau}^{2}+n^{4}}d\tilde{\tau} \\
&\lesssim  T\sum_{n}\langle n\rangle^{2s}|n|^{4b}|\phi_{n}|^{2}\int_{-\infty}^{\infty}
\frac{1}{\langle\rho\rangle^{2-2b}}d\rho \\
&\sim T\|\phi\|_{H^{s+2b}}^{2}, \ \ \ \text{for}\  b<\frac{1}{2}.
\end{align*}
\end{proof}

\begin{proof} [Proof of Proposition \ref{Prop:ContofStochConv}]

From the identity
\[ \int_{r}^{t}(t-t')^{\alpha-1}(t'-r)^{-\alpha}dt' = \frac{\pi}{\sin\pi\alpha}\]
for $\alpha\in(0,1/2), 0\leq r \leq t' \leq t$.  Letting
\[Y(t')=\int_{0}^{t'}S(t'-r)(t'-r)^{-\alpha}\phi \partial_{x}dW(r),\]
we find
\begin{align}
\frac{\sin(\pi\alpha)}{\pi}\int_{0}^{t}&S(t-t')(t-t')^{\alpha-1}Y(t')dt'
\notag \\
&=  \frac{\sin(\pi\alpha)}{\pi}\int_{0}^{t}S(t-t')(t-t')^{\alpha-1}
\Big(\int_{0}^{t'}S(t'-r)(t'-r)^{-\alpha}\phi
\partial_{x}dW(r)\big)dt'  \notag \\
&=  \frac{\sin(\pi\alpha)}{\pi}\int_{0}^{t}S(t-r) \Big(\int_{r}^{t}(t-t')^{\alpha-1}(t'-r)^{-\alpha}dt'\Big)\phi
\partial_{x}dW(r)  \label{Eqn:semigroup}\\
&=  \int_{0}^{t}S(t-r)\phi \partial_{x}dW(r) \notag \\
&=  \Phi(t). \notag
\end{align}
We have obtained (\ref{Eqn:semigroup}) from
\[S(t-r)=S(t-t')S(t'-r)\]
for $r\leq t' \leq t$.
Next we will use the following Lemma from \cite[Lemma 2.7]{DP}:

\begin{lemma}
Let $T>0$, $\alpha\in(0,1/2)$, and $m>\frac{1}{2\alpha}$.  For $f\in L^{2m}([0,T];H^{s}(\mathbb{T}))$, let
\[F(t)=\int_{0}^{t}S(t-t')(t-t')^{\alpha-1}f(t')dt', \ \ \ 0\leq t \leq T.\]
Then $F\in C([0,T];H^{s}(\mathbb{T}))$.  Moreover $\exists\, C = C(m,T)$ such that
\[\|F(t)\|_{H^{s}(\mathbb{T})} \leq C\|f\|_{L^{2m}([0,T];H^{s}(\mathbb{T}))}.\]
\label{Lemma:DP}
\end{lemma}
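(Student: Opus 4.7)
The plan is to prove the two conclusions separately, following the factorization method of Da Prato--Kwapie\'n--Zabczyk, and exploiting just two properties of the KdV-Burgers propagator: the contraction estimate $\|S(\sigma)g\|_{H^s}\le \|g\|_{H^s}$ for $\sigma\ge 0$ (immediate from the Fourier multiplier $e^{-n^{2}\sigma+in^{3}\sigma}$) and the strong continuity $\|S(\sigma)g-g\|_{H^s}\to 0$ as $\sigma\to 0^+$ (which follows by dominated convergence on the Fourier side).

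\textbf{Step 1 (the $H^s$ bound).} By Minkowski and contraction,
$\|F(t)\|_{H^s}\le \int_0^t (t-t')^{\alpha-1}\|f(t')\|_{H^s}\,dt'.$
Applying H\"older in $t'$ with conjugate exponents $(2m, q)$, where $q:=\tfrac{2m}{2m-1}$, gives
$\|F(t)\|_{H^s}\le \Bigl(\int_0^t (t-t')^{(\alpha-1)q}\,dt'\Bigr)^{1/q}\|f\|_{L^{2m}([0,T];H^s)}.$
The hypothesis $m>\tfrac{1}{2\alpha}$ is precisely what forces $(\alpha-1)q>-1$, so the $dt'$-integral converges, and after extracting the $1/q$ power it is bounded by a constant times $T^{\alpha-1/(2m)}$, which is the advertised $C(m,T)$.

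\textbf{Step 2 (continuity).} Fix $t_0\in [0,T]$ and consider $t\to t_0^+$. Decompose $F(t)-F(t_0)=I_1(t)+I_2(t)$, where $I_1$ is the integral of $S(t-t')(t-t')^{\alpha-1}f(t')$ over $[t_0,t]$ and $I_2$ is the integral over $[0,t_0]$ of the difference $[S(t-t')(t-t')^{\alpha-1}-S(t_0-t')(t_0-t')^{\alpha-1}]f(t')$. For $I_1$, re-apply Step 1 on the short interval $[t_0,t]$ to obtain $\|I_1\|_{H^s}\lesssim (t-t_0)^{\alpha-1/(2m)}\|f\|_{L^{2m}([t_0,t];H^s)}\to 0$. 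For $I_2$, apply dominated convergence in $H^s$: the integrand tends to zero pointwise in $t'$ by strong continuity of $S$ combined with $(t-t')^{\alpha-1}\to (t_0-t')^{\alpha-1}$; it is dominated by $2(t_0-t')^{\alpha-1}\|f(t')\|_{H^s}$, using the monotonicity $(t-t')^{\alpha-1}\le (t_0-t')^{\alpha-1}$ (valid for $\alpha<1$ and $t>t_0$) together with $\|S(\sigma)\|_{H^s\to H^s}\le 1$, and this dominant is integrable on $[0,t_0]$ by the same H\"older bound.

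I do not expect a genuine obstacle, as this is a textbook application of factorization, but two technicalities deserve care: first, the left-sided limit $t\to t_0^-$ has a shrinking domain of integration, so one instead writes $F(t)-F(t_0) = -\int_t^{t_0}S(t_0-t')(t_0-t')^{\alpha-1}f(t')\,dt' + \int_0^t [S(t-t')(t-t')^{\alpha-1}-S(t_0-t')(t_0-t')^{\alpha-1}]f(t')\,dt'$ and repeats the argument (the first term is controlled by Step 1 on $[t,t_0]$, the second by dominated convergence on $[0,t]$); second, the strong continuity of $S$ must be verified at $\sigma=0$, but this is immediate from $|e^{-n^2\sigma+in^3\sigma}-1|\to 0$ pointwise in $n$ combined with the uniform bound $|e^{-n^2\sigma+in^3\sigma}|\le 1$ and dominated convergence on Fourier side.
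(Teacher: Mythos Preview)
Your proof is correct and is essentially the standard factorization argument from Da Prato's book that the paper cites as \cite[Lemma 2.7]{DP}. The paper does not reproduce this proof; it merely invokes the reference and verifies the one hypothesis on which the cited argument rests, namely the uniform bound $M_T=\sup_{t\in[0,T]}\|S(t)\|_{H^s\to H^s}\le 1$ for the KdV--Burgers semigroup, which is exactly your contraction estimate. Your write-up additionally spells out the strong continuity of $S$ needed for the dominated-convergence step; this is not mentioned explicitly in the paper's one-line justification but is of course part of the underlying argument in \cite{DP}.
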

\noindent The proof of Lemma \ref{Lemma:DP} only requires boundedness of
\begin{align*}
M_{T} &= \sup_{t\in[0,T]} \|S(t)\|_{H^{s}\rightarrow H^{s}} \\
&= \sup_{t\in[0,T]} \sup_{\|f\|_{H^{s}}=1}\|S(t)f\|_{H^{s}}  \\
&= \sup_{t\in[0,T]} \sup_{\|f\|_{H^{s}}=1} \Big(\sum_{n}\langle n\rangle^{2s} e^{-2n^{2}|t|}|\hat{f}(n)|^{2}\Big)^{1/2} \\
&\leq 1.
\end{align*}
In view of Lemma \ref{Lemma:DP}, to prove Proposition \ref{Prop:ContofStochConv} it suffices to show that
\[Y(t')\in L^{2m}([0,T];H^{s}(\mathbb{T}))\]
almost surely.  Using the It\^{o} isometry, we compute for each $n$,
\begin{align*}
\mathbb{E}\Big(|\widehat{Y(t')}(n)|^{2}\Big) &= \mathbb{E}\Big(|\int_{0}^{t'}e^{-n^{2}(t'-r)}e^{in^{3}(t'-r)}
(t'-r)^{-\alpha}\phi_{n}(in)dB_{n}(r)|^{2}\Big)  \\
&= |\phi_{n}|^{2}n^{2}\int_{0}^{t'}e^{-2n^{2}(t'-r)}(t'-r)^{-2\alpha}dr,
\end{align*}
and we find
\begin{align*}
\int_{0}^{t'}e^{-2n^{2}(t'-r)}(t'-r)^{-2\alpha}dr
&= \int_{0}^{n^{2}t'}e^{-2u}(\frac{u}{n^{2}})^{-2\alpha}\frac{du}{n^{2}} \\
&= \frac{1}{n^{2(1-2\alpha)}}\int_{0}^{n^{2}t'}e^{-2u}u^{-2\alpha}du  \\
&\leq \frac{1}{n^{2(1-2\alpha)}}\int_{0}^{\infty}e^{-2u}u^{-2\alpha}du  \\
&\lesssim \frac{1}{n^{2(1-2\alpha)}},
\end{align*}
for $\alpha <\frac{1}{2}$.  This gives
\[\mathbb{E}\Big(|\widehat{Y(t')}(n)|^{2}\Big) \lesssim |n|^{4\alpha}|\phi_{n}|^{2}.\]
Next we apply the Minkowski integral inequality, with $2m>2$, to find
\begin{align*}
\mathbb{E}\Big(\|Y(t')\|_{H^{s}(\mathbb{T})}^{2m}\Big)
&= \mathbb{E}\Big[\Big(\sum_{n}\langle n\rangle^{2s}|\widehat{Y(t)}(n)|^{2}\Big)^{\frac{2m}{2}}\Big]  \\
&\leq \Big(\sum_{n}\langle n\rangle^{2s}\Big(\mathbb{E}\Big(|\widehat{Y(t')}(n)|^{2m}\Big)\Big)^{\frac{2}{2m}}\Big)
^{\frac{2m}{2}}  \\
&\lesssim  \Big(\sum_{n}\langle n\rangle^{2s}\Big(\langle n\rangle^{4\alpha m}|\phi_{n}|^{2m}\Big)^{\frac{2}{2m}}\Big)
^{\frac{2m}{2}}  \\
&= \Big(\sum_{n}\langle n\rangle^{2s+4\alpha}|\phi_{n}|^{2}\Big)^{m}  \\
&= \|\phi\|_{H^{s+2\alpha}}^{2m}.
\end{align*}
Then
\begin{align*}
\mathbb{E}\Big(\int_{0}^{T}\|Y(t')\|_{H^{s}(\mathbb{T})}^{2m}dt'\Big)
\lesssim \|\phi\|_{H^{s+2\alpha}}^{2m}T < \infty.
\end{align*}
Thus $Y(t')\in L^{2m}([0,T];H^{s}(\mathbb{T}))$
almost surely, and the proof of Proposition \ref{Prop:ContofStochConv} is complete.

\end{proof}

\subsection{Proof of Theorem \ref{Thm:Contraction}}

We will now proceed with the proof of Theorem \ref{Thm:Contraction} using Propositions \ref{Prop:Linear} - \ref{Prop:ContofStochConv} and Lemma \ref{Lemma:gainpowerofT}.  Recall that the proof of Proposition \ref{Prop:bilinear} is included in the appendix.

\begin{proof}[Proof of Theorem \ref{Thm:Contraction}]

As stated in the beginning of this Section, we will prove that the solution $u$ to \eqref{Eqn:SDKDV-Duh-1} is almost surely the unique fixed point of a contraction on $S(t)u_{0} + \Phi(t) + B$, where $B$ is the unit ball in the space $X^{s,b}_{T_{\omega}}$ of space-time functions (adapted to the KdV-Burgers equation), for a stopping time $T_{\omega}>0$, and suitable $s,b \in \mathbb{R}$.  This begins by reexpressing \eqref{Eqn:SDKDV-Duh-1} in terms of the nonlinear part of the solution.  That is,
letting $z(t):=S(t)u_{0}$, we can rewrite (\ref{Eqn:SDKDV-Duh-1}) in terms of $v:=u-z-\Phi$,
\begin{align}
v &= -\frac{1}{2}\int_{0}^{t}S(t-t')\partial_{x}((v+z+\Phi)^{2}(t'))dt' \notag \\
&=: \Gamma(v).
\label{Eqn:SDKDV-Duh-2}
\end{align}
This way, the solution $u$ to
\eqref{Eqn:SDKDV-Duh-1} is the unique fixed point of the contraction $\tilde{\Gamma}(u)= z+\Phi + \Gamma(u-z-\Phi)$
on the unit ball in $X^{s,b}_{T_{\omega}}$ centered at $z+\Phi$ if and only if $v$ is the unique fixed point of the contraction $\Gamma(v)$ on the unit ball in $X^{s,b}_{T_{\omega}}$ centered at $0$.  To simplify presentation, we will prove the latter; that $\Gamma$ is almost surely a contraction on the unit ball in $X^{s,\frac{1}{2}-\varepsilon}_{T_{\omega}}$, for $T_{\omega}>0$ sufficiently small.  That is, we will prove that there is a stopping time $T_{\omega}$, such that, almost surely, $T_{\omega}>0$, and for all $u,v\in X^{s,\frac{1}{2}-\varepsilon}_{T_{\omega}}$ with $\|u\|_{X^{s,\frac{1}{2}-\varepsilon}_{T_{\omega}}},\|v\|_{X^{s,\frac{1}{2}-\varepsilon}_{T_{\omega}}}\leq 1$, we have
\begin{align}
\|\Gamma(v)\|_{X^{s,\frac{1}{2}-\varepsilon}_{T_{\omega}}} &\leq 1,  \label{Eqn:mapsintoball} \\
\|\Gamma(u)-\Gamma(v)\|_{X^{s,\frac{1}{2}-\varepsilon}_{T_{\omega}}} &\leq \frac{1}{2}\|u-v\|_{X^{s,\frac{1}{2}-\varepsilon}_{T_{\omega}}}. \label{Eqn:contracts}
\end{align}
In fact, we will show that for any $T>0$,
\begin{align}
\|\Gamma(v)\|_{X^{s,\frac{1}{2}-\varepsilon}_{T}} &\leq C T^{\varepsilon-}\Big(\|u_{0}\|_{H^{s}(\mathbb{T})}
 +
\|v\|_{X^{s,\frac{1}{2}-\varepsilon}_{T}} + \|\chi_{[0,T]}\Phi\|_{X^{s,\frac{1}{2}-\varepsilon}}
\Big)^{2}, \label{Eqn:mapsinto-tech} \\
\|\Gamma(u)-\Gamma(v)\|_{X^{s,\frac{1}{2}-\varepsilon}_{T}} &\leq CT^{\varepsilon-}\Big(\|u_{0}\|_{H^{s}(\mathbb{T})}
 +
\|u\|_{X^{s,\frac{1}{2}-\varepsilon}_{T}} +
\|v\|_{X^{s,\frac{1}{2}-\varepsilon}_{T}} + \|\chi_{[0,T]}\Phi\|_{X^{s,\frac{1}{2}-\varepsilon}}\Big)
\notag  \\
&\ \ \ \ \ \ \ \ \|u-v\|_{X^{s,\frac{1}{2}-\varepsilon}_{T}}, \label{Eqn:contracts-tech}
\end{align}
for some constant $C>0$.  Then (\ref{Eqn:mapsintoball}) and (\ref{Eqn:contracts}) will follow from (\ref{Eqn:mapsinto-tech}) and (\ref{Eqn:contracts-tech}), respectively, by taking
\begin{align}
T_{\omega} := \min \Big\{T>0:2CT^{\varepsilon-}\Big(\|u_{0}\|_{H^{s}(\mathbb{T})} + 2 + \|\chi_{[0,T]}\Phi\|_{X^{s,\frac{1}{2}-\varepsilon}}\Big)^{2}
\geq 1\Big\}.
\label{Eqn:stoptime}
\end{align}
\noindent  By Proposition \ref{Prop:EstofStochConv},
$$ \mathbb{E}(\|\Phi\|^{2}_{X^{s,\frac{1}{2}-\varepsilon}_{1}}) \lesssim \|\phi\|^{2}_{H^{s+1-2\varepsilon}} <\infty,$$
and we have, for any $0<T<1$,
\begin{align*}
\|\chi_{[0,T]}\Phi\|_{X^{s,\frac{1}{2}-\varepsilon}} &\lesssim \|\Phi\|_{X^{s,\frac{1}{2}-\varepsilon}_{1}} \\
&\leq  C(\omega),
\end{align*}
almost surely.  In addition, since $b < \frac{1}{2}$, $\|\chi_{[0,T]}\Phi\|_{X^{s,\frac{1}{2}-\varepsilon}}$ is almost surely continuous with respect to $T$.  It follows that $T_{\omega}>0$ almost surely.  Since $\|\chi_{[0,T]}\Phi\|_{X^{s,b}}$ is $\mathcal{F}_{T}$-measurable, $T_{\omega}$ is a stopping time.

We proceed to justify \eqref{Eqn:mapsinto-tech} and \eqref{Eqn:contracts-tech}, beginning with (\ref{Eqn:mapsinto-tech}).  By Lemma \ref{Lemma:gainpowerofT}, Proposition \ref{Prop:Nonhom-Linear} and Proposition \ref{Prop:bilinear} we have
\begin{align}
\label{Eqn:mapsinto-1}
\|\Gamma(v)\|_{X^{s,\frac{1}{2}-\varepsilon}_{T}} &\lesssim  T^{\varepsilon-}\|\Gamma(v)\|_{X^{s,\frac{1}{2}}_{T}} \notag \\
&\leq  T^{\varepsilon-} \|\partial_{x}\big((v+z+\Phi)^{2}\big)\|
_{X^{s,-\frac{1}{2}+\gamma}_{T}} \notag \\
&\lesssim   T^{\varepsilon-} \|v+z+\Phi\|_{X^{s,\frac{1}{2}-\varepsilon}_{T}}^{2}
\notag \\
&\lesssim   T^{\varepsilon-} \big( \|v\|_{X^{s,\frac{1}{2}-\varepsilon}_{T}}
+\|z\|_{X^{s,\frac{1}{2}-\varepsilon}_{T}}
+\|\Phi\|_{X^{s,\frac{1}{2}-\varepsilon}_{T}}
\big)^{2} \notag \\
&\sim   T^{\varepsilon-} \big( \|v\|_{X^{s,\frac{1}{2}-\varepsilon}_{T}}
+\|z\|_{X^{s,\frac{1}{2}-\varepsilon}_{T}}
+\|\chi_{[0,T]}\Phi\|_{X^{s,\frac{1}{2}-\varepsilon}}
\big)^{2},  \ \ \text{by \eqref{Eqn:cutoff}}.
\end{align}

\noindent Using Proposition \ref{Prop:Linear},
\begin{align}
\|z\|_{X^{s,\frac{1}{2}-\varepsilon}_{T}} = \|S(t)u_{0}\|_{X^{s,\frac{1}{2}-\varepsilon}_{T}} \leq \|u_{0}\|_{H^{s}(\mathbb{T})}.
\label{Eqn:linbound}
\end{align}

\noindent Then we have by (\ref{Eqn:mapsinto-1}),
\[\|\Gamma(v)\|_{X^{s,\frac{1}{2}-\varepsilon}_{T}} \leq C T^{\varepsilon-}
\Big(\|u_{0}\|_{H^{s}(\mathbb{T})} +  \|v\|_{X^{s,\frac{1}{2}-\varepsilon}_{T}}
+  \|\chi_{[0,T]}\Phi\|_{X^{s,\frac{1}{2}-\varepsilon}}\Big)^{2},\]
and (\ref{Eqn:mapsinto-tech}) holds true.

Turning to \eqref{Eqn:contracts-tech}, we apply Lemma \ref{Lemma:gainpowerofT} and Propositions \ref{Prop:Linear} and \ref{Prop:bilinear} to find
\begin{align*}
\|\Gamma(u)-\Gamma(v)\|_{X^{s,\frac{1}{2}-\varepsilon}_{T}} &= \|\frac{1}{2}\int_{0}^{t}S(t-t')\partial_{x}\big((u+v+z+\Phi)(u-v)\big)dt'\|
_{X^{s,\frac{1}{2}-\varepsilon}_{T}}  \\
&\lesssim  T^{\varepsilon-}\|\int_{0}^{t}S(t-t')\partial_{x}\big((u+v+z+\Phi)(u-v)\big)dt'\|
_{X^{s,\frac{1}{2}}_{T}}  \\
&\lesssim T^{\varepsilon-}\|\partial_{x}\big((u+v+z+\Phi)(u-v)\big)\|_{X^{s,-\frac{1}{2}+\gamma}_{T}} \\
&\lesssim  T^{\varepsilon-}\Big(\|u\|_{X^{s,\frac{1}{2}-\varepsilon}_{T}}
+\|v\|_{X^{s,\frac{1}{2}-\varepsilon}_{T}}
+ \|z\|_{X^{s,\frac{1}{2}-\varepsilon}_{T}}
+\|\Phi\|_{X^{s,\frac{1}{2}-\varepsilon}_{T}}\Big)\\
&\ \ \ \ \ \ \ \ \ \|u-v\|_{X^{s,\frac{1}{2}-\varepsilon}_{T}}
\\
&\sim  T^{\varepsilon-}\Big(\|u\|_{X^{s,\frac{1}{2}-\varepsilon}_{T}}
+\|v\|_{X^{s,\frac{1}{2}-\varepsilon}_{T}} + \|u_{0}\|_{H^{s}(\mathbb{T})} + \|\chi_{[0,T]}\Phi\|_{X^{s,\frac{1}{2}-\varepsilon}}\Big)
\\
&\ \ \ \ \ \ \ \ \ \|u-v\|_{X^{s,\frac{1}{2}-\varepsilon}_{T}}, \ \ \ \text{by \eqref{Eqn:cutoff} and \eqref{Eqn:linbound}}.
\end{align*}
This completes the justification of \eqref{Eqn:contracts-tech}.  Having proven \eqref{Eqn:mapsinto-tech} and \eqref{Eqn:contracts-tech}, \eqref{Eqn:mapsintoball} and \eqref{Eqn:contracts} follow with $T=T_{\omega}$ from the definition \eqref{Eqn:stoptime}.  We conclude that $\Gamma$ is almost surely a contraction on the unit ball in $X^{s,\frac{1}{2}-\varepsilon}_{T_{\omega}}$.  By the Banach fixed point theorem, there is almost surely a unique solution $v\in X^{s,\frac{1}{2}-\varepsilon}_{T_{\omega}}$ to \eqref{Eqn:SDKDV-Duh-2}.  Rewriting \eqref{Eqn:SDKDV-Duh-2} in terms of $u=z+v+\Phi$, we have proven almost sure local existence of a unique solution $u$ to (\ref{Eqn:SDKDV-Duh-1}).

It remains to establish almost sure continuity of $u(t)$ in $H^{s}(\mathbb{T})$, and almost sure continuous dependence on the data.  We begin by proving that $u\in C([0,T];H^{s}(\mathbb{T}))$ almost surely.  Given that $u=z+v+\Phi$, it suffices to verify a.s. continuity of $z$, $v$ and $\Phi$ with separate arguments.  Continuity of $z(t)=S(t)u_{0}$ is trivial.  Almost sure continuity of
$$ v = \Gamma(v) = \int_{0}^{t}S(t-t')\partial_{x}((z+v+\Phi)^{2}(t'))dt'$$ follows from Proposition \ref{Prop:ContofNonlin}, and the following estimate:
\begin{align} \|\partial_{x}\big((z+v+\Phi)^{2}\big)\|_{X^{s,-\frac{1}{2}+\gamma}_{T}}
&\lesssim \|v+z + \Phi\|_{X^{s,\frac{1}{2}-\varepsilon}_{T}}^{2}  \notag \\
&\lesssim (1+\|u_{0}\|_{H^{s}(\mathbb{T})}+C(\omega))^{2}
\notag \\
&< \infty,
\label{Eqn:contbound}
\end{align}
almost surely.  In the statement \eqref{Eqn:contbound} we have invoked Propositions \ref{Prop:Linear}, \ref{Prop:bilinear}, and \ref{Prop:EstofStochConv}.  Finally, almost sure continuity of $\Phi$ follows from Proposition \ref{Prop:ContofStochConv}, by witnessing that $\phi\in HS(L^{2};H^{s+1-2\varepsilon}) \subset HS(L^{2};H^{s+2\alpha})$, when $\alpha>0$ is sufficiently small.

Having established that $u\in C([0,T];H^{s}(\mathbb{T}))$ almost surely, it remains to justify almost sure continuous dependence on the data.
Suppose $\{u^{n}_{0}\}$ is a sequence in $H^{s}(\mathbb{T})$ converging to some $u_{0}$.
From the dependence of the time of local existence $T_{\omega}>0$ on the $H^{s}(\mathbb{T})$ norm of the initial data, it follows that for all $n$ sufficiently large, the solutions $u^{n}$ and $u$ to \eqref{Eqn:SDKDV} with initial data $u_{0}^{n}$ and $u_{0}$, respectively, both exist on a time interval $[0, T_{\omega}]$, with $T_{\omega}>0$ (independent of $n$).  The fixed point method guarantees that the solution map $u_{0}\in H^{s}(\mathbb{T})\mapsto u\in X^{s,\frac{1}{2}-\varepsilon}_{T_{\omega}}$ is analytic.  In particular, we have that $u_{n}\rightarrow u$ in $X_{T_{\omega}}^{s,\frac{1}{2}-\varepsilon}$.
Letting $f_{n}=\partial_{x}\big((u^{n}-u)(u^{n}+u)\big)$, by Proposition \ref{Prop:bilinear}, we have
\begin{align*}
\|f_{n}\|_{X^{s,-\frac{1}{2}+\gamma}_{T_{\omega}}} &=
\|\partial_{x}\big((u^{n}-u)(u^{n}+u)\big)\|_{X^{s,-\frac{1}{2}+\gamma}_{T_{\omega}}}
\\
&\lesssim \|u^{n}+ u\|_{X^{s,\frac{1}{2}-\varepsilon}_{T_{\omega}}}
\|u^{n}-u\|_{X^{s,\frac{1}{2}-\varepsilon}_{T_{\omega}}}
\\
&\longrightarrow 0,
\end{align*}
as $n\rightarrow \infty$.  Writing
$$u^{n} = S(t)u_{0}^{n} + \int_{0}^{t}S(t-t')\partial_{x} \big((u^{n})^{2}\big)dt' + \Phi(t)$$
and
$$u=S(t)u_{0} + \int_{0}^{t}S(t-t')\partial_{x} \big(u^{2}\big)dt' + \Phi(t),$$
we have by Proposition \ref{Prop:ContofNonlin}
that
\begin{align*}
u^{n}-u &= \int_{0}^{t}S(t-t')\partial_{x} \big((u^{n}-u)(u^{n}+u)\big)dt'  \\
&=  \int_{0}^{t}S(t-t')\partial_{x} f(t')dt' \\
&\longrightarrow  0,
\end{align*}
in $C([0,T_{\omega}];H^{s}(\mathbb{T}))$ almost surely.  We conclude that the data to solution map for \eqref{Eqn:SDKDV} is almost surely continuous.

Finally, we observe that the same argument can be used, for fixed $u_{0}\in H^{s}(\mathbb{T})$, to verify that the map $\Phi\in X^{s,\frac{1}{2}-\varepsilon}_{T_{\omega}} \mapsto v=v(\Phi) \in C([0,T_{\omega}];H^{s}(\mathbb{T}))$ satisfying \eqref{Eqn:SDKDV-Duh-2} is almost surely continuous.  In particular, this confirms that the map $\omega\in\Omega \mapsto v=v_{\omega}\in C([0,T_{\omega}];H^{s}(\mathbb{T}))$ satisfying \eqref{Eqn:SDKDV-Duh-2} is $\mathcal{F}_{T_{\omega}}$-measurable, as this is the composition of the measurable map $\omega \in\Omega \mapsto \Phi\in X^{s,\frac{1}{2}-\varepsilon}_{T_{\omega}}$ and the continuous map $\Phi\in X^{s,\frac{1}{2}-\varepsilon}_{T_{\omega}} \mapsto v=v(\Phi) \in C([0,T_{\omega}];H^{s}(\mathbb{T}))$.  Writing $u=z+v+\Phi$,
we combine this observation with Proposition \ref{Prop:ContofStochConv} to conclude that
the map $\omega\in\Omega \mapsto u=u_{\omega}\in C([0,T_{\omega}];H^{s}(\mathbb{T}))$ satisfying \eqref{Eqn:SDKDV-Duh-1} is $\mathcal{F}_{T_{\omega}}$-measurable.

In conclusion, there is a stopping time $T_{\omega}>0$ and a unique process $u\in C([0,T_{\omega}];H^{s}(\mathbb{T}))$ satisfying \eqref{Eqn:SDKDV} on $[0,T_{\omega}]$ almost surely.  The proof of Theorem \ref{Thm:Contraction} is complete.

\end{proof} 

\section{Global well-posedness}
\label{Sec:GWP}

This Section is devoted to the proof of Theorem \ref{Thm:GWP}.  In the first subsection we establish apriori bounds on global-in-time solutions to \eqref{Eqn:SDKDV} which are truncated in spatial frequency.  In the second subsection we establish the convergence needed for the proof of Theorem \ref{Thm:GWP}.

\subsection{Global estimates}

Given $N>0$, let $\mathbb{P}_{N}$ denote the dirichlet projection to $E_{N}=\text{span}\{e^{inx}|0<|n|\leq N\}$.  We consider the frequency truncated stochastic PDE
\begin{align}
\left\{
\begin{array}{ll}du^{N} = \Big(u^{N}_{xx} + u^{N}_{xxx} + \mathbb{P}_{N}\big[((u^{N})^{2})_{x}\big]\Big)dt + \phi^{N}\partial_{x} dW  , \ \
t\geq 0, x\in \mathbb{T}
\\
u^{N}(0,x) = u_{0}^{N}(x) = \mathbb{P}_{N}(u_{0}(x))\in L^{2}(\mathbb{T}),
\end{array} \right.
\label{Eqn:SDKDV-trunc}
\end{align}
where $\phi^{N}=\mathbb{P}_{N}\phi$, and $u^{N}=\mathbb{P}_{N}u^{N}$.
We will solve the Duhamel form of \eqref{Eqn:SDKDV-trunc}
\begin{align}
u^{N} = S(t)u_{0}^{N} + \int_{0}^{t}S(t-t')\mathbb{P}_{N}\big((u^{N})^{2}\big)_{x}dt'
+  \int_{0}^{t}S(t-t')\phi^{N}\partial_{x}dW(t').
\label{Eqn:SDKDV-trunc-Duh}
\end{align}
We will also take
$$ \Phi^{N}(t) = \int_{0}^{t}S(t-t')\phi^{N}\partial_{x}dW(t')$$
to denote the frequency truncated stochastic convolution.
In this subsection, we establish uniform bounds on the solution to \eqref{Eqn:SDKDV-trunc}.
Specifically, we establish the following Propositions.

\begin{proposition}
Let $u_{0}\in H^{s}(\mathbb{T})$, and $\phi\in HS(L^{2}(\mathbb{T}),H^{s}(\mathbb{T}))$ of the form \eqref{Eqn:phi-diagonal}.  For every $N>0$, and each $T>0$, there is almost surely a unique solution $u^{N}(t)$
to \eqref{Eqn:SDKDV-trunc-Duh} for all $t\in[0,T]$.
\label{Prop:finiteGWP}
\end{proposition}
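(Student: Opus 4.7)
The plan is to view \eqref{Eqn:SDKDV-trunc-Duh} as a finite-dimensional SDE on $E_{N}$ and then extend a local solution to all of $[0,T]$ via an a priori $L^{2}$ bound. Since $\mathbb{P}_{N}$ and $\phi^{N}=\mathbb{P}_{N}\phi$ annihilate all frequencies $|n|>N$, and since $S(t)$ preserves $E_{N}$, equation \eqref{Eqn:SDKDV-trunc-Duh} reduces to a system of finitely many Itô SDEs for the Fourier coefficients $\widehat{u^{N}}(n,t)$ with $0<|n|\leq N$. On the finite-dimensional space $E_{N}$ all Sobolev norms are equivalent, so the quadratic map $u\mapsto\mathbb{P}_{N}(u^{2})_{x}$ is smooth, hence locally Lipschitz, while the diffusion coefficient $\phi^{N}\partial_{x}$ is bounded and constant in $u$.

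The first step is to establish local existence up to a maximal stopping time $T^{*}_{\omega}\in(0,\infty]$. This follows by a standard Picard iteration on \eqref{Eqn:SDKDV-trunc-Duh} in $C([0,T];E_{N})$ (with the $L^{2}$ norm); one also checks, using Proposition \ref{Prop:ContofStochConv} applied to $\phi^{N}$ (which trivially satisfies the required Hilbert-Schmidt condition after frequency truncation), that $\Phi^{N}\in C([0,T];E_{N})$ almost surely. Uniqueness on $[0,T^{*}_{\omega})$ also comes out of the contraction. So one obtains a unique maximal local solution $u^{N}\in C([0,T^{*}_{\omega});E_{N})$ with $T^{*}_{\omega}=\infty$ or $\lim_{t\uparrow T^{*}_{\omega}}\|u^{N}(t)\|_{L^{2}}=\infty$ on $\{T^{*}_{\omega}<\infty\}$.

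The key step is to rule out the blow-up alternative by an a priori $L^{2}$ energy estimate. Applying Itô's formula to $\|u^{N}\|_{L^{2}}^{2}$ along the solution (working first up to the stopping time $\tau_{R}=\inf\{t\geq 0:\|u^{N}(t)\|_{L^{2}}\geq R\}\wedge T^{*}_{\omega}$) gives
\begin{align*}
d\|u^{N}\|_{L^{2}}^{2} = 2\langle u^{N}, u^{N}_{xx}-u^{N}_{xxx}-\mathbb{P}_{N}(u^{N})^{2}_{x}\rangle\,dt + 2\langle u^{N},\phi^{N}\partial_{x}dW\rangle + \|\phi^{N}\partial_{x}\|_{HS(L^{2};L^{2})}^{2}\,dt.
\end{align*}
Integration by parts on $\mathbb{T}$ gives $\langle u^{N},u^{N}_{xxx}\rangle=0$ and $\langle u^{N},u^{N}_{xx}\rangle=-\|u^{N}_{x}\|_{L^{2}}^{2}\leq 0$. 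For the nonlinearity, using that $u^{N}\in E_{N}$ and that $\mathbb{P}_{N}$ is self-adjoint on $L^{2}(\mathbb{T})$, one has $\langle u^{N},\mathbb{P}_{N}(u^{N})^{2}_{x}\rangle=\langle u^{N},(u^{N})^{2}_{x}\rangle=\tfrac{1}{3}\int_{\mathbb{T}}\partial_{x}((u^{N})^{3})\,dx=0$. The Hilbert-Schmidt trace satisfies $\|\phi^{N}\partial_{x}\|_{HS(L^{2};L^{2})}^{2}=\sum_{0<|n|\leq N}n^{2}|\phi_{n}|^{2}=:K_{N}<\infty$. Taking expectations (the martingale term drops out by the cut-off at $\tau_{R}$) yields
\begin{align*}
\mathbb{E}\,\|u^{N}(t\wedge\tau_{R})\|_{L^{2}}^{2} \leq \|u_{0}^{N}\|_{L^{2}}^{2}+K_{N}\,t.
\end{align*}

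Letting $R\to\infty$ via Fatou, one concludes that $\|u^{N}\|_{L^{2}}^{2}$ remains almost surely finite on any interval $[0,T]$, which is incompatible with the blow-up alternative. Hence $T^{*}_{\omega}=\infty$ almost surely, and the unique local solution extends to a unique global solution on $[0,T]$ for every $T>0$. The only mildly delicate point is the cancellation $\langle u^{N},\mathbb{P}_{N}(u^{N})^{2}_{x}\rangle=0$, which relies crucially on the fact that the projection $\mathbb{P}_{N}$ appears \emph{inside} the Duhamel nonlinearity so that testing against $u^{N}\in E_{N}$ removes it; without that symmetric placement the cancellation would fail and we would need a Gronwall-type argument with a stronger norm. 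Everything else is routine finite-dimensional SDE theory.
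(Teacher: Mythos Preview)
Your argument is correct and considerably more direct than the paper's. Both proofs rest on the same It\^o computation for $d\|u^{N}\|_{L^{2}}^{2}$ (the paper records this as \eqref{Eqn:dL2}), exploiting the cancellation $\langle u^{N},\mathbb{P}_{N}\partial_{x}(u^{N})^{2}\rangle=0$ and the sign of the dissipative term. From there, however, the paper takes a much longer route: it first proves a lemma controlling all moments $\mathbb{E}\big(\sup_{t\leq T}\|u^{N}(t)\|_{L^{2}}^{2p}\big)$ by induction on $p$, then partitions $[0,T]$ into subintervals of length $\delta$, uses Chebyshev with a large moment $p$ to bound the probability that $\|u^{N}(k\delta)\|_{L^{2}}$ exceeds a threshold, and finally balances parameters so that the union of bad events has measure at most $\sigma$. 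Your approach instead invokes the standard blow-up alternative for finite-dimensional SDEs with locally Lipschitz drift and additive noise, and kills the blow-up directly from the uniform bound $\mathbb{E}\|u^{N}(t\wedge\tau_{R})\|_{L^{2}}^{2}\leq \|u_{0}^{N}\|_{L^{2}}^{2}+K_{N}t$ (your ``via Fatou'' is fine: on $\{T^{*}\leq t\}$ one has $\|u^{N}(t\wedge\tau_{R})\|_{L^{2}}=R\to\infty$, so Fatou forces $P(T^{*}\leq t)=0$). This is the textbook Lyapunov-function argument and is entirely adequate here. The paper's more elaborate scheme is presumably modeled on the iteration used later for the infinite-dimensional GWP result, but for the truncated system your route is cleaner.
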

The proof of Proposition \ref{Prop:finiteGWP} is found in the appendix.  Proposition \ref{Prop:finiteGWP} guarantees the global existence of solutions to the frequency truncated stochastic PDE \eqref{Eqn:SDKDV-trunc}.
Because it is finite-dimensionsal, this result is (essentially) independent of any conditions placed on $\phi$; we have taken $\phi\in HS(L^{2},H^{s})$ because this is sufficient for our purposes throughout this paper.

Consider initial data $u_{0}\in L^{2}(\mathbb{T})$,
and additive noise smoothed by $\frac{3}{2}$ spatial derivatives (that is, consider $\phi\in HS(L^{2},H^{1})$).  We can apply Proposition \ref{Prop:finiteGWP} with $s=0$, since $\phi \in HS(L^{2},H^{1})\subset HS(L^{2},L^{2})$.  We conclude that there is a unique solution $u^{N}(t)$ to \eqref{Eqn:SDKDV-trunc} which exists globally in time, almost surely.  With the smoothed noise, we can establish the following bound on the (expected) growth of the $L^{2}$-norm of the solution $u^{N}(t)$.  The crucial point is that this bound is independent of $N$.

\begin{proposition}
Let $u_{0}\in L^{2}(\mathbb{T})$, and $\phi\in HS(L^{2}(\mathbb{T}),H^{1}(\mathbb{T}))$ of the form \eqref{Eqn:phi-diagonal}.
The unique solution $u^{N}(t)$ to \eqref{Eqn:SDKDV-trunc} satisfies
\begin{align}
\mathbb{E}\Big(\sup_{0\leq t \leq T}\|u^{N}(t)\|_{L^{2}_{x}}^{2}\Big) &\leq C,
\label{Eqn:GWPL2bound}
\end{align}
where $C=C(T,\|u^{N}_{0}\|_{L^{2}_{x}},\|\phi^{N}\|_{H^{1}})$.
\label{Prop:GWPL2bound}
\end{proposition}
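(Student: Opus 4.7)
The plan is to establish the bound via an $L^2$ energy estimate obtained from It\^o's formula applied to $F(v) = \|v\|_{L^2_x}^2$, taking advantage of the fact that the frequency truncated equation \eqref{Eqn:SDKDV-trunc} lives in the finite-dimensional space $E_N$ so all manipulations are rigorous. Writing $u^N(t) = \sum_{0<|n|\leq N} \widehat{u^N}(n,t) e^{inx}$ and using that $\phi^N \partial_x dW = \sum_{0<|n|\leq N} \phi_n (in) dB_n(t) e^{inx}$, It\^o's formula gives
\begin{align*}
d\|u^N\|_{L^2_x}^2 = 2\langle u^N, du^N\rangle_{L^2} + \bigl\|\phi^N \partial_x\bigr\|_{HS(L^2;L^2)}^2\, dt.
\end{align*}

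The first key step is to observe that only the dissipative term and the It\^o correction contribute to the drift. Indeed, the dispersive piece vanishes by antisymmetry, $\langle u^N, u^N_{xxx}\rangle = 0$, and the nonlinear piece vanishes because $u^N = \mathbb{P}_N u^N$ allows one to drop the projection via self-adjointness, followed by integration by parts:
\begin{align*}
\langle u^N, \mathbb{P}_N[((u^N)^2)_x]\rangle = \langle u^N, ((u^N)^2)_x\rangle = -\tfrac{1}{3}\int_{\mathbb{T}}\partial_x\bigl((u^N)^3\bigr)\,dx = 0.
\end{align*}
The dissipative piece gives $2\langle u^N, u^N_{xx}\rangle = -2\|u^N_x\|_{L^2_x}^2 \leq 0$, which is a favorable sign that I will simply discard. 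The It\^o correction is $\|\phi^N\partial_x\|_{HS(L^2;L^2)}^2 = \sum_n n^2|\phi_n|^2 \lesssim \|\phi^N\|_{H^1}^2$. Thus after integration,
\begin{align*}
\|u^N(t)\|_{L^2_x}^2 \leq \|u_0^N\|_{L^2_x}^2 + C\|\phi^N\|_{H^1}^2\, t + M_t,
\end{align*}
where $M_t = 2\int_0^t \langle u^N(s), \phi^N \partial_x dW(s)\rangle$ is a martingale (in fact a true martingale in the truncated setting) with quadratic variation $[M]_t = 4\int_0^t \sum_n n^2 |\phi_n|^2 |\widehat{u^N}(n,s)|^2 ds \leq 4\|\phi^N\|_{H^1}^2 \int_0^t \|u^N(s)\|_{L^2_x}^2 ds$.

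The final step is to take $\sup_{t\leq T}$ and expectations, handling the martingale with the Burkholder-Davis-Gundy inequality:
\begin{align*}
\mathbb{E}\sup_{t\leq T}|M_t| \lesssim \mathbb{E}[M]_T^{1/2} \lesssim \|\phi^N\|_{H^1} \, \mathbb{E}\left(T^{1/2} \sup_{s\leq T}\|u^N(s)\|_{L^2_x}\right) \leq \tfrac{1}{2}\,\mathbb{E}\sup_{s\leq T}\|u^N(s)\|_{L^2_x}^2 + C T\|\phi^N\|_{H^1}^2,
\end{align*}
by Cauchy-Schwarz followed by Young's inequality. Absorbing the $\tfrac{1}{2}\mathbb{E}\sup$ term into the left-hand side yields \eqref{Eqn:GWPL2bound} with $C = C(T,\|u_0^N\|_{L^2_x},\|\phi^N\|_{H^1})$. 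The only subtle point is justifying that $M_t$ is a genuine martingale rather than merely a local one (so that BDG applies cleanly); this is where the finite-dimensionality of $E_N$ is used, since $u^N$ is almost surely a continuous path in the finite-dimensional space $E_N$ (by Proposition \ref{Prop:finiteGWP}) and the integrand is locally bounded. Alternatively, one may introduce a stopping time $\tau_R = \inf\{t : \|u^N(t)\|_{L^2_x} \geq R\}$, run the argument up to $T\wedge \tau_R$ to produce a bound uniform in $R$, and then send $R\to\infty$ using Fatou's lemma.
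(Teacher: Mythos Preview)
Your proposal is correct and follows essentially the same route as the paper: apply It\^o's formula to $\|u^N\|_{L^2_x}^2$, use the exact cancellations of the dispersive and nonlinear contributions (the paper writes these out in Fourier coordinates to arrive at \eqref{Eqn:dL2}), drop the non-positive dissipative term, then control the martingale via Burkholder/BDG and absorb with Young's inequality. The only cosmetic difference is that the paper pauses to verify the dropped term $\int_0^T\|u^N\|_{\dot H^1}^2\,dt$ is finite (equations \eqref{Eqn:drop2}--\eqref{Eqn:Stochfinite1}), whereas you simply discard it; in the finite-dimensional setting this finiteness is automatic, and your stopping-time remark already covers the point that $M_t$ is a genuine martingale.
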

The proof of Proposition \ref{Prop:GWPL2bound}
will be included in this subsection.  We begin with some computations and lemmata which will be used in the proofs of Propositions \ref{Prop:finiteGWP} and \ref{Prop:GWPL2bound}.  The stochastic PDE (\ref{Eqn:SDKDV-trunc}) is a coupled system of SDEs for the Fourier coefficients $c_{n}(t)$ of $u^{N}(t)$.  For $0<|n|\leq N$, we have
\begin{align}
dc_{n}(t) &= \Big((-n^{2}-in^{3})c_{n}(t) + (in)\sum_{\substack{|n_{1}|\leq N \\ |n_{2}|\leq N \\ n=n_{1}+n_{2}}}
c_{n_{1}}(t)c_{n_{2}}(t)\Big)dt \\ &\ \ \ \ \ \ \ \ \ \ \ + \phi_{n}(in)(dB_{n}^{1}+idB_{n}^{2}) \notag,
\end{align}
where $(B_{n}^{1}(t))_{n\in \mathbb{N}}$, $(B_{n}^{2}(t))_{n\in \mathbb{N}}$ are families of standard real-valued Brownian motions which are mutually independent, and $B_{-n}^{1}=B_{n}^{1}$, $B_{-n}^{2}=-B_{n}^{2}$.
Decomposing $c_{n}(t)=a_{n}(t)+ib_{n}(t)$ into real and imaginary parts, this gives
\begin{align}
da_{n}(t) &= \Big(-n^{2}a_{n}+n^{3}b_{n} + Re\big[(in)\sum_{\substack{|n_{1}|\leq N \\ |n_{2}|\leq N \\ n=n_{1}+n_{2}}}
c_{n_{1}}(t)c_{n_{2}}(t)\big]\Big)dt - n\phi_{n}dB_{n}^{2}  \label{Eqn:df1}\\
db_{n}(t) &= \Big(-n^{2}b_{n}-n^{3}a_{n} + Im\big[(in)\sum_{\substack{|n_{1}|\leq N \\ |n_{2}|\leq N \\ n=n_{1}+n_{2}}}
c_{n_{1}}(t)c_{n_{2}}(t)\big]\Big)dt + n\phi_{n}dB_{n}^{1}. \label{Eqn:df2}
\end{align}
\nopagebreak
Letting $f(a_{n},b_{n})=a_{n}^{2}+b_{n}^{2}$, we have by the It\^{o} formula, \eqref{Eqn:df1}, \eqref{Eqn:df2}, and the property $(dB^{i}_{n})^{2}=dt$ (for $i=1,2$ and every $n$), that
\pagebreak
\begin{align*}
df(a_{n},b_{n}) &= \frac{\partial f}{\partial a_{n}}(a_{n},b_{n})da_{n} + \frac{\partial f}{\partial b_{n}}(a_{n},b_{n})db_{n} +  \frac{1}{2}\Big[\frac{\partial^{2}f}{\partial a_{n}^{2}}\big(-n\phi_{n}dB_{n}^{2}\big)^{2} + \frac{\partial^{2}f}{\partial b_{n}^{2}}\big(n\phi_{n}dB_{n}^{1}\big)^{2}\Big]  \\
&=  2a_{n}\Big[-n^{2}a_{n}+n^{3}b_{n} + Re\big[(in)\sum_{\substack{|n_{1}|\leq N \\ |n_{2}|\leq N \\ n=n_{1}+n_{2}}}
c_{n_{1}}(t)c_{n_{2}}(t)\big]\Big]dt \\ & \ \ \ \ \ \ \ \ \ + 2b_{n}\Big[-n^{2}b_{n}-n^{3}a_{n} + Im\big[(in)\sum_{\substack{|n_{1}|\leq N \\ |n_{2}|\leq N \\ n=n_{1}+n_{2}}}
c_{n_{1}}(t)c_{n_{2}}(t)\big]\Big]dt \\ & \ \ \ \ \ \ \ \ \ -n\phi_{n}a_{n}dB_{n}^{2} + n\phi_{n}b_{n}dB_{n}^{1} + 2n^{2}\phi_{n}^{2}dt  \\
&=  \Big[-2n^{2}(a_{n}^{2}+b_{n}^{2}) + Re\big[(in)a_{n}\sum_{\substack{|n_{1}|\leq N \\ |n_{2}|\leq N \\ n=n_{1}+n_{2}}}
c_{n_{1}}(t)c_{n_{2}}(t)\big] \\
& \ \ \ \ \ \ \ \ \
+ Im\big[(in)b_{n}\sum_{\substack{|n_{1}|\leq N \\ |n_{2}|\leq N \\ n=n_{1}+n_{2}}}
c_{n_{1}}(t)c_{n_{2}}(t)\big]+ 2n^{2}\phi_{n}^{2}\Big]dt \\ & \ \ \ \ \ \ \ \ \  - n\phi_{n}a_{n}dB_{n}^{2} + n\phi_{n}b_{n}dB_{n}^{1}.
\end{align*}

\noindent We have $\|u^{N}\|_{L^{2}_{x}}^{2} = \sum_{|n|\leq N}(a_{n}^{2}+b_{n}^{2})
= \sum_{|n|\leq N}f(a_{n},b_{n})$,
and therefore
\begin{align}
d(\|u^{N}\|^{2}_{L^{2}_{x}}) &= \sum_{|n|\leq N}df(a_{n},b_{n})  \notag \\
&=  \Big[-2\|u^{N}\|^{2}_{\dot{H}^{1}_{x}} + \sum_{|n|\leq N}Re\big[(in)a_{n}\sum_{\substack{|n_{1}|\leq N \\ |n_{2}|\leq N \\ n=n_{1}+n_{2}}}
c_{n_{1}}(t)c_{n_{2}}(t)\big] \notag \\ & \ \ \ \ \ \ \ \ \ + \sum_{|n|\leq N}Im\big[(in)b_{n}\sum_{\substack{|n_{1}|\leq N \\ |n_{2}|\leq N \\ n=n_{1}+n_{2}}}
c_{n_{1}}(t)c_{n_{2}}(t)\big] + 2\|\phi^{N}\|^{2}_{\dot{H}^{1}}\Big]dt \notag \\ & \ \ \ \ \ \ \ \ \   - \sum_{|n|\leq N}n\phi_{n}a_{n}dB_{n}^{2} + \sum_{|n|\leq N}n\phi_{n}b_{n}dB_{n}^{1}.
\label{Eqn:L2growth}
\end{align}
\nopagebreak
\noindent Now we compute that for all $t$,
\pagebreak
\begin{align*}
\sum_{|n|\leq N}\overline{c_{n}(t)}(in)\sum_{\substack{|n_{1}|\leq N \\ |n_{2}|\leq N \\ n=n_{1}+n_{2}}}
c_{n_{1}}(t)c_{n_{2}}(t)  &=  \int_{\mathbb{T}}u^{N}(t)\mathbb{P}_{N}\big(\partial_{x}((u^{N})^{2}(t))\big)dx  \\
&=  \int_{\mathbb{T}}u^{N}(t)\partial_{x}((u^{N})^{2}(t))dx  \\
&=  -\int_{\mathbb{T}}\partial_{x}(u^{N}(t))((u^{N})^{2}(t))dx  \\
&=  -\frac{1}{3}\int_{\mathbb{T}}\partial_{x}((u^{N})^{3}(t))dx \\
&=  0.
\end{align*}

\noindent Then,
\begin{align}
0 &= Re\Big[\sum_{|n|\leq N}\overline{c_{n}(t)}(in)\sum_{\substack{|n_{1}|\leq N \\ |n_{2}|\leq N \\ n=n_{1}+n_{2}}}
c_{n_{1}}(t)c_{n_{2}}(t)\Big]  \notag \\
&= \sum_{|n|\leq N}Re\big[(in)a_{n}\sum_{\substack{|n_{1}|\leq N \\ |n_{2}|\leq N \\ n=n_{1}+n_{2}}}
c_{n_{1}}(t)c_{n_{2}}(t)\big] + \sum_{|n|\leq N}Re\big[-i(in)b_{n}\sum_{\substack{|n_{1}|\leq N \\ |n_{2}|\leq N \\ n=n_{1}+n_{2}}}
c_{n_{1}}(t)c_{n_{2}}(t)\big]  \notag  \\
&= \sum_{|n|\leq N}Re\big[(in)a_{n}\sum_{\substack{|n_{1}|\leq N \\ |n_{2}|\leq N \\ n=n_{1}+n_{2}}}
c_{n_{1}}(t)c_{n_{2}}(t)\big] + \sum_{|n|\leq N}Im\big[(in)b_{n}\sum_{\substack{|n_{1}|\leq N \\ |n_{2}|\leq N \\ n=n_{1}+n_{2}}}
c_{n_{1}}(t)c_{n_{2}}(t)\big].
\label{Eqn:L2growth-cancel}
\end{align}

\noindent Combining \eqref{Eqn:L2growth} and \eqref{Eqn:L2growth-cancel},
\begin{align}
d(\|u^{N}\|^{2}_{L^{2}_{x}}) &= \Big[-2\|u^{N}\|^{2}_{\dot{H}^{1}_{x}} + 2\|\phi^{N}\|^{2}_{\dot{H}^{1}_{x}}\Big]dt - \sum_{|n|\leq N}n\phi_{n}a_{n}dB_{n}^{2} + \sum_{|n|\leq N}n\phi_{n}b_{n}dB_{n}^{1}.
\label{Eqn:dL2}
\end{align}
The proof of Proposition \ref{Prop:GWPL2bound} will be included in this subsection.  For the proof of Proposition \ref{Prop:finiteGWP}, consult the appendix.

\begin{proof}[Proof of Proposition \ref{Prop:GWPL2bound}]


We will use \eqref{Eqn:dL2} to estimate the expected growth of the $L^{2}$-norm of the solution $u^{N}(t)$ to \eqref{Eqn:SDKDV-trunc}.  That is, for any $t>0$, we have by \eqref{Eqn:dL2} that
\begin{align}
\|u^{N}(t)\|_{L^{2}_{x}}^{2} - \|u^{N}(0)\|_{L^{2}_{x}}^{2} &= \int_{0}^{t}d(\|u^{N}(t')\|^{2}_{L^{2}_{x}}) \notag \\
&=  -2\int_{0}^{t}\|u^{N}(t')\|^{2}_{\dot{H}^{1}_{x}}dt' + 2\int_{0}^{t}\|\phi^{N}\|^{2}_{\dot{H}^{1}_{x}}dt' \notag \\ &\ \ \ \ - \sum_{|n|\leq N}n\phi_{n}\int_{0}^{t}a_{n}(t')dB_{n}^{2}(t') + \sum_{|n|\leq N}n\phi_{n}\int_{0}^{t}b_{n}(t')dB_{n}^{1}(t').
\label{Eqn:Itocomp}
\end{align}
Notice that the first term in \eqref{Eqn:Itocomp} is non-positive.  We can drop this term from our estimate as long as it is finite.
We thus proceed to establish that
\begin{align}
\int_{0}^{T}\|u^{N}(t)\|^{2}_{\dot{H}^{1}_{x}}dt \leq
C(\omega) < \infty,
\label{Eqn:drop2}
\end{align}
almost surely.  Letting $v^{N}=u^{N}-S(t)u_{0}^{N}-\Phi^{N}$, we have
\begin{align}
\int_{0}^{T}\|u^{N}(t)\|^{2}_{\dot{H}^{1}_{x}}dt  &\sim  \|u^{N}\|^{2}_{X^{1,0}_{T}} \notag \\
&\leq  \Big( \|S(t)u_{0}^{N}\|_{X^{1,0}_{T}} + \|v^{N}\|_{X^{1,0}_{T}} + \|\Phi^{N}\|_{X^{1,0}_{T}}\Big)^{2}.  \label{Eqn:droptriangle1}
\end{align}

\noindent We compute each term separately.
\begin{align}
\|S(t)u_{0}^{N}\|^{2}_{X^{1,0}_{T}} &\sim \int_{0}^{T}\|S(t)u_{0}^{N}\|^{2}_{\dot{H}^{1}_{x}}dt  \notag\\
&= \int_{0}^{T}\sum_{|n|\leq N, n\neq 0}|n|^{2}|e^{-n^{2}t}e^{in^{3}t}\hat{u_{0}}(n)|^{2}dt  \notag\\
&=  \sum_{|n|\leq N, n\neq 0}|n|^{2}|\hat{u_{0}}(n)|^{2}\int_{0}^{T}
e^{-2n^{2}t}dt  \notag \\
&=  \frac{1}{2}\sum_{|n|\leq N, n\neq 0}|\hat{u_{0}}(n)|^{2}(1-e^{-2n^{2}T})  \notag\\
&\leq  \|u_{0}^{N}\|_{L^{2}_{x}}^{2}.
\label{Eqn:linfinite1}
\end{align}

\noindent Then, using $\langle n^{2}\rangle \leq \langle i(\tau-n^{3}) + n^{2} \rangle $, Propositions 2 and 3, and the definition of $T_{\omega,K}$,
\begin{align}
\|v^{N}\|_{X^{1,0}_{T}} &\leq \|v^{N}\|_{X^{0,\frac{1}{2}}_{T}}  \notag\\
&\leq  T^{\varepsilon-} \|u^{N}\|^{2}_{X^{0,\frac{1}{2}-\varepsilon}_{T}} \notag  \\
&\leq  T^{\varepsilon-} \|u^{N}\|^{2}_{X^{0,\frac{1}{2}-\varepsilon}_{T}} \notag \\
&<  \infty,
\label{Eqn:nonlinfinite1}
\end{align}

\noindent almost surely.  Finally, by (a trivial modification of the proof of) Proposition \ref{Prop:EstofStochConv}, since $\phi^{N}\in HS(L^{2}(\mathbb{T}),H^{1}(\mathbb{T}))$ of the form \eqref{Eqn:phi-diagonal}, we have
$$ \mathbb{E}(\|\Phi^{N}\|^{2}_{X^{1,0}_{T}})
\sim \|\phi^{N}\|^{2}_{\dot{H}^{1}} < \infty,$$
and this gives
\begin{align}
\|\Phi^{N}\|_{X^{1,0}_{T}} \leq C(\omega) < \infty, \label{Eqn:Stochfinite1}
\end{align}
almost surely.  Combining equations (\ref{Eqn:droptriangle1}), (\ref{Eqn:linfinite1}), (\ref{Eqn:nonlinfinite1}) and (\ref{Eqn:Stochfinite1}), the justification of \eqref{Eqn:drop2} is complete.

From \eqref{Eqn:drop2} and \eqref{Eqn:dL2}, we have that, almost surely,
\begin{align*}
\sup_{0\leq t \leq T}\|u^{N}(t)\|_{L^{2}_{x}}^{2} &\leq \|u_{0}\|_{L^{2}_{x}}^{2} + 2T \|\phi\|_{\dot{H}^{1}_{x}}^{2} \\ &\ \ \ + \sum_{|n|\leq N,n\neq 0}|n\phi_{n}|\Big(\sup_{0\leq t \leq T}\Big|\int_{0}^{t}a_{n}(t')dB_{n}^{2}(t')\Big| \\
&\ \ \ \ \ \ \ \ \ \ \ \ \ \ + \sup_{0\leq t \leq T}\Big|\int_{0}^{t}b_{n}(t')dB_{n}^{1}(t')\Big|\Big).
\end{align*}

\noindent For each $n$, the stochastic integrals $X_{n,t}:= \int_{0}^{t}a_{n}(t')dB_{n}^{2}(t')$, $Y_{n,t}:= \int_{0}^{t}b_{n}(t')dB_{n}^{1}(t')$ are continuous martingales.  Using Burkholder's inequality, we have
\begin{align*}
\mathbb{E}\Big(\sup_{0\leq t \leq T}|X_{n,t}|\Big)
&\leq 3\mathbb{E}\Big(\big(\int_{0}^{T}(a_{n}(t))^{2}dt\big)^{\frac{1}{2}}\Big)  \\
&\leq 3\mathbb{E}\Big(\big(\int_{0}^{T}((a_{n}(t))^{2}+ (b_{n}(t))^{2})dt\big)^{\frac{1}{2}}\Big),
\end{align*}

\noindent and the same inequality holds with $Y_{n,t}$ on the left-hand side.  This gives
\begin{align*}
\mathbb{E}\Big(\sup_{0\leq t \leq T}\|u^{N}(t)\|_{L^{2}_{x}}^{2}\Big)
&\leq \mathbb{E}\Big(\sup_{0\leq t \leq T}\|u^{N}(t)\|^{2}_{L^{2}_{x}}\Big) \\
&\leq \|u_{0}\|_{L^{2}_{x}}^{2} + 2T \|\phi\|_{\dot{H}^{1}}^{2} \\
&\ \ \ \ \ + 6\sum_{|n|\leq N,n\neq 0}|n\phi_{n}|\mathbb{E}\Bigg[\big(\int_{0}^{T}((a_{n}(t))^{2}+ (b_{n}(t))^{2})dt\big)^{1/2}\Bigg] \\
&\leq \|u_{0}\|_{L^{2}_{x}}^{2} + 2T \|\phi\|_{\dot{H}^{1}}^{2} + 6\|\phi\|_{\dot{H}^{1}}\Big[\mathbb{E}\big(\int_{0}^{T}
\|u^{N}(t)\|_{L^{2}_{x}}^{2}dt\big)\Big]^{1/2}  \\
&\leq  \|u_{0}\|_{L^{2}_{x}}^{2} + C(T) \|\phi\|_{\dot{H}^{1}}^{2} + \frac{1}{2}\mathbb{E}\Big(\sup_{0\leq t \leq T}\|u^{N}(t)\|_{L^{2}_{x}}^{2}\Big).
\end{align*}

Rearranging this expression,
\begin{align*}
\mathbb{E}\Big(\sup_{0\leq t \leq T}\|u^{N}(t)\|_{L^{2}_{x}}^{2}\Big) &\leq 2\|u_{0}\|_{L^{2}_{x}}^{2} + 2C(T)\|\phi\|_{\dot{H}^{1}_{x}}^{2},
\end{align*}
and \eqref{Eqn:GWPL2bound} holds true.  The proof
of Proposition \ref{Prop:GWPL2bound} is complete.
\end{proof}

\subsection{Proof of Theorem \ref{Thm:GWP}}

In this subsection we will prove Theorem \ref{Thm:GWP} using Propositions \ref{Prop:finiteGWP} and \ref{Prop:GWPL2bound}.  For the proof of Proposition \ref{Prop:finiteGWP}, consult the appendix.

\begin{proof}[Proof of Theorem \ref{Thm:GWP}:]
Given $u_{0}\in L^{2}(\mathbb{T})$, $\phi\in HS(L^{2},H^{1})$ of the form \eqref{Eqn:phi-diagonal}, for every $N>0$ we form the cutoff functions $u_{0}^{N}=\mathbb{P}_{N}u_{0}$, $\phi^{N}=\mathbb{P}_{N}\phi$.  Given $T>0$, by Proposition \ref{Prop:finiteGWP}, a unique solution $u^{N}(t)$ to \eqref{Eqn:SDKDV-trunc-Duh} almost surely exists for $t\in[0,T]$ and satisfies \eqref{Eqn:GWPL2bound}.  Hence, the sequence $\{u^{N}\}_{N\in \mathbb{N}}$ is bounded in $L^{2}(\Omega;L^{\infty}((0,T);L^{2}(\mathbb{T})))$, and we can extract a subsequence which converges weak-* to a limit $\tilde{u}\in L^{2}(\Omega;L^{\infty}((0,T);L^{2}(\mathbb{T})))$ satisfying \eqref{Eqn:GWPL2bound}.  It remains to justify that $\tilde{u}$ satisfies \eqref{Eqn:SDKDV-Duh-1} on $[0,T]$ almost surely.

Letting $z^{N}(t)=S(t)u_{0}^{N}$, and $v^{N} = u^{N}- z^{N} - \Phi^{N}$, then for each $N$, $v^{N}$ satisfies the truncated equation
\begin{align}
v^{N} &= \int_{0}^{t}S(t-t')\mathbb{P}_{N}\Big(\partial_{x}\big((z^{N}+v^{N}+\Phi^{N})^{2}\big)(t')\Big)dt'
\notag \\
&=: \Gamma^{N}(v^{N}).
\label{Eqn:SDKDV-v-trunc}
\end{align}

\noindent From the proof of Theorem \ref{Thm:Contraction}, $\Gamma^{N}$ is almost surely a contraction on a ball of radius 1 in $X^{0,\frac{1}{2}-\varepsilon}_{\tilde{T}}$ for any $\tilde{T}>0$ satisfying
\begin{align}
2C\tilde{T}^{\varepsilon-}\Big(2+\|u_{0}^{N}\|_{L^{2}_{x}}^{2} + \|\chi_{[0,\tilde{T}]}\Phi^{N}\|^{2}_
{X^{0,\frac{1}{2}-\varepsilon}}\Big)^{2} \leq 1,
\label{Eqn:contracts-trunc}
\end{align}
for a certain constant $C>0$.  Let
\begin{align*}
D(\omega) := \sup_{0\leq t\leq T}\|\tilde{u}(t)\|^{2}_{L^{2}_{x}}.
\end{align*}
Then since $\tilde{u}$ satisfies \eqref{Eqn:GWPL2bound} on $[0,T]$, we have that $D(\omega)<\infty$ almost surely.  Consider $\tilde{T}_{\omega}>0$ satisfying
\begin{align}
2C\tilde{T}_{\omega}^{\varepsilon-}\Big(2 + \|u_{0}\|_{L^{2}_{x}} +  D(\omega) + \|\chi_{[0,T]}\Phi\|_{X^{s,\frac{1}{2}-\varepsilon}}\Big)^{2}
\leq  1.
\label{Eqn:GWPtime}
\end{align}
Then for every $N>0$, we have
\begin{align*}
\|u^{N}_{0}\|_{L^{2}_{x}} \leq \|u_{0}\|_{L^{2}_{x}},
\end{align*}
and
\begin{align*}
\|\chi_{[0,\tilde{T}_{\omega}]}\Phi^{N}\|_{X^{s,\frac{1}{2}-\varepsilon}} \leq \|\chi_{[0,T]}\Phi\|_{X^{s,\frac{1}{2}-\varepsilon}}.
\end{align*}
It follows that \eqref{Eqn:contracts-trunc} is satisfied almost surely for every $N>0$ with $\tilde{T}=\tilde{T}_{\omega}$.  Furthermore, we have
$\tilde{T}_{\omega} \leq T_{\omega}$, where $T_{\omega}$ is the time of local existence for the full solution $v$ coming from the proof of Theorem \ref{Thm:Contraction}.
We conclude that $\Gamma^{N}$ and $\Gamma$ are contractions  (for every $N>0$) in $X^{0,\frac{1}{2}-\varepsilon}_{\tilde{T}_{\omega}}$, where $\tilde{T}_{\omega}$ satisfies \eqref{Eqn:GWPtime}.
In particular, a unique solution $v \in X^{0,\frac{1}{2}-\varepsilon}_{\tilde{T}_{\omega}}$ to \eqref{Eqn:SDKDV-Duh-2} almost surely exists.
Furthermore, for each $N>0$, $v^{N}$ and $v$
are the unique fixed points of the contractions
$\Gamma^{N}$ and $\Gamma$, respectively.
Letting $u=S(t)u_{0}+v+\Phi$, then $u$ solves \eqref{Eqn:SDKDV-Duh-1} on $[0,\tilde{T}_{\omega}]$ almost surely, and we find
\begin{align}
\|u^{N}-u\|_{X^{0,\frac{1}{2}-\varepsilon}_{\tilde{T}_{\omega}}} &=  \|S(t)(u_{0}^{N}-u_{0}) + v^{N}-v + \Phi^{N}-\Phi\|_{X^{0,\frac{1}{2}-\varepsilon}_{\tilde{T}_{\omega}}} \notag \\
&\leq \|S(t)(u_{0}^{N}-u_{0})\|_{X^{0,\frac{1}{2}-\varepsilon}_{\tilde{T}_{\omega}}} + \|v^{N}-v\|_{X^{0,\frac{1}{2}-\varepsilon}_{\tilde{T}_{\omega}}} + \|\Phi^{N}-\Phi\|_{X^{0,\frac{1}{2}-\varepsilon}_{\tilde{T}_{\omega}}}.
\label{Eqn:GWPconv}
\end{align}

\noindent Examining each term,
\begin{align*}
\|S(t)(u_{0}^{N}-u_{0})\|_{X^{0,\frac{1}{2}-\varepsilon}_{\tilde{T}_{\omega}}} \rightarrow 0,
\end{align*}
\noindent by construction.  While
\begin{align*}
\|\Phi^{N}-\Phi\|_{X^{0,\frac{1}{2}-\varepsilon}_{\tilde{T}_{\omega}}} =
\bigg\|\int_{0}^{t}S(t-t')\mathbb{P}_{\geq N}(\phi)\partial_{x}dW(t')\bigg\|_{X^{0,\frac{1}{2}-\varepsilon}_{\tilde{T}_{\omega}}} \rightarrow 0,
\end{align*}
\noindent almost surely, by Proposition \ref{Prop:EstofStochConv}.  Lastly,
\begin{align*}
\|v^{N}-v\|_{X^{0,\frac{1}{2}-\varepsilon}_{\tilde{T}_{\omega}}} &= \|\Gamma^{N}(v^{N})-\Gamma(v)\|_{X^{0,\frac{1}{2}-\varepsilon}_{\tilde{T}_{\omega}}} \\ &\leq
\|\Gamma^{N}(v^{N})-\Gamma^{N}(v)\|_{X^{0,\frac{1}{2}-\varepsilon}_{\tilde{T}_{\omega}}} +
\|\Gamma^{N}(v)-\Gamma(v)\|_{X^{0,\frac{1}{2}-\varepsilon}_{\tilde{T}_{\omega}}},
\end{align*}
\noindent and we find
\begin{align*}
\|\Gamma^{N}(v^{N})-\Gamma^{N}(v)\|_{X^{0,\frac{1}{2}-\varepsilon}_{\tilde{T}_{\omega}}} \leq \frac{1}{2} \|v^{N}-v\|_{X^{0,\frac{1}{2}-\varepsilon}_{\tilde{T}_{\omega}}},
\end{align*}
\noindent since $\Gamma^{N}$ is a contraction for each $N>0$.  Then
\begin{align*}
\|\Gamma(v)-\Gamma^{N}(v)\|_{X^{0,\frac{1}{2}-\varepsilon}_{\tilde{T}_{\omega}}} &=  \bigg\|\int_{0}^{t}S(t-t')\mathbb{P}_{\geq N}((v+S(t')u_{0} + \Phi)^{2})dt'\bigg\|_{X^{0,\frac{1}{2}-\varepsilon}_{\tilde{T}_{\omega}}} \\
&= \|\mathbb{P}_{N}(v)\|_{X^{0,\frac{1}{2}-\varepsilon}_{\tilde{T}_{\omega}}}
\\
&\rightarrow  0,
\end{align*}
almost surely, by the almost sure finiteness of $v$ in $X^{0,\frac{1}{2}-\varepsilon}_{\tilde{T}_{\omega}}$. By \eqref{Eqn:GWPconv}, we conclude that
$u^{N}\rightarrow u$ in $X^{0,\frac{1}{2}-\varepsilon}_{\tilde{T}_{\omega}}$, as $N\rightarrow \infty$, almost surely. Moreover, (as in the justification of almost sure continuous dependence of the local solution on the data, see Section \ref{Sec:LWP}) Propositions \ref{Prop:bilinear}, \ref{Prop:EstofStochConv} and \ref{Prop:ContofNonlin} imply that $u^{N}\rightarrow u$ in $C([0,\tilde{T}_{\omega}];L^{2}(\mathbb{T}))$.  We conclude that $u=\tilde{u}$ for $t\in[0,\tilde{T}_{\omega}]$ almost surely.  This gives
\begin{align}
\|u(\tilde{T}_{\omega})\|_{L^{2}_{x}}^{2} \leq \sup_{0\leq t\leq T}\|\tilde{u}(t)\|_{L^{2}_{x}}^{2}=D(\omega),
\label{Eqn:GWP}
\end{align}
\noindent almost surely.  By \eqref{Eqn:GWPtime} we can iterate the argument above on $[\tilde{T}_{\omega},2\tilde{T}_{\omega}]$.  Thus, we have $u=\tilde{u}$ on $[0,T]$ almost surely, and our proof is complete.
\end{proof}

\section{Appendix}
\label{Sec:app}

In the appendix we cover three topics.  First, we present a formal proof of white noise invariance for \eqref{Eqn:SDKDV}.  Second, we prove the  bilinear estimate from Section \ref{Sec:LWP} (Proposition \ref{Prop:bilinear}).  Lastly, we establish the (almost sure) global existence of solutions to the frequency truncated stochastic PDE \eqref{Eqn:SDKDV-trunc} (Proposition \ref{Prop:finiteGWP}).

\subsection{Formal invariance of white noise}

We begin with the construction of mean zero spatial white noise.  Let $u = \sum_{n} \hat{u}_n e^{inx}$ be a real-valued function on $\mathbb{T}$ with mean zero.  That is, we have $\hat{u}_{0} = 0$ and $\hat{u}_{-n} = \overline{\hat{u}_{n}}$.
First, define $\mu_N$ on $\mathbb{C}^{N} \cong \mathbb{R}^{2N}$ (the space of Fourier coefficients)
with the density
\begin{equation}\label{Eqn:WNdensity}
 d \mu_N = Z_N^{-1} e^{- \sum_{n = 1}^N  |\hat{u}_n|^2 } \textstyle \prod_{ n = 1 }^N d \hat{u}_n ,
\end{equation}

\noindent
where $Z_{N} $ is a normalizing constant.  Note that $\mu_{N}$ is the induced probability measure on $\mathbb{C}^{N}$ under the map
\begin{align*}
 \omega\in\Omega \longmapsto (  g_n(\omega)  )_{n = 1}^N \in \mathbb{C}^{N},
\end{align*}

\noindent
where $g_n(\omega)$, $n = 1, \cdots, N$,  are independent standard complex Gaussian random variables.  Next, the map
\begin{align*}
(\hat{u}_{n})_{n = 1}^N \in \mathbb{C}^{N}
\longmapsto
\sum_{0<|n|\leq N}\hat{u}_{n}e^{inx} \in E_{N},
\end{align*}
pushes $\mu_{N}$ forward to a probability measure on
$E_{N}=\text{span}\{e^{inx}: 0< |n|\leq N\}$, and then to a probability measure on  $H^{s}(\mathbb{T})$ by extension.
White noise $\mu$ on $H^{s}(\mathbb{T})$, for $s<-\frac{1}{2}$, can be defined as the weak limit of the sequence of truncated measures $\mu_{N}$; it is the probability measure induced by the map  $\omega \in \Omega \longmapsto u = \sum_{n \neq 0} g_n(\omega) e^{inx} \in H^{s}(\mathbb{T})$,
where $\{g_n(\omega)\}_{n \geq 1}$ are independent standard complex Gaussian random variables,
and $\hat{g}_{-n} = \overline{\hat{g}_{n}}$.

The stochastic PDE \eqref{Eqn:SDKDV} preserves white noise $\mu$ if and only if the solution map $S_{t}:H^{s}(\mathbb{T})\rightarrow H^{s}(\mathbb{T})$, for $s<-\frac{1}{2}$, satisfies $S_{t}^{*}\mu = \mu$ (in distribution) for each $t\geq 0$.  To be clear, we do not know that the solution map $S_{t}$ is well-defined; Theorem \ref{Thm:Contraction} is designed as a progress towards this definition.
Instead, consider the frequency truncated stochastic PDE \eqref{Eqn:SDKDV-trunc}, with solution map $S_{t}^{N}:
E_{N}\rightarrow
E_{N}$.  In this subsection, we establish the following claim.

\begin{claim}
With $\phi =\textup{Id}$, the equation \eqref{Eqn:SDKDV-trunc} preserves the truncated white noise $\mu_{N}$.  That is,  $(S_{t}^{N})^{*}\mu_{N}=\mu_{N}$ (in distribution) for each $t\geq 0$.
\label{Claim:formalWN}
\end{claim}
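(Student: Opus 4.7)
The plan is to verify invariance of $\mu_N$ for the finite-dimensional diffusion \eqref{Eqn:SDKDV-trunc} by showing that its density $\rho_N = Z_N^{-1}\exp\bigl(-\sum_{n=1}^N(a_n^2+b_n^2)\bigr)$ satisfies $L^*\rho_N = 0$, where $L$ is the infinitesimal generator read off from \eqref{Eqn:df1}--\eqref{Eqn:df2} (with $\phi_n=1$). Since $\mu_N$ is a finite measure on $E_N \cong \mathbb{R}^{2N}$ and the diffusion is non-degenerate with polynomial coefficients, $L^*\rho_N = 0$ gives invariance of $\mu_N$ under the semigroup by the standard Fokker-Planck argument. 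I would decompose $L = L_{\mathrm{diss}} + L_{\mathrm{disp}} + L_{\mathrm{nl}}$ according to the three dynamical contributions visible in \eqref{Eqn:df1}--\eqref{Eqn:df2} and show that each adjoint separately annihilates $\rho_N$.

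First, the dissipation-plus-noise piece at each mode $n$ is a two-dimensional Ornstein-Uhlenbeck process with drift $-n^2(a_n,b_n)$ and constant diffusion coefficient $n$ in each real coordinate, whose unique stationary distribution has density $\pi^{-1}\exp(-(a_n^2+b_n^2))$. Since the $B_n^{1,2}$ are independent across $n$ and $\rho_N$ factors as the product of these two-dimensional Gaussians, $L_{\mathrm{diss}}^*\rho_N = 0$ by a direct computation of the adjoint. Second, the dispersive drift $n^3 b_n\partial_{a_n} - n^3 a_n\partial_{b_n}$ is an infinitesimal rotation of the $(a_n,b_n)$-plane; it has zero divergence and preserves the radially symmetric function $a_n^2+b_n^2$, so $L_{\mathrm{disp}}^*\rho_N = 0$.

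Third, for the projected nonlinearity write $F_n = (in)\sum_{n_1+n_2=n,\,|n_i|\leq N} c_{n_1}c_{n_2}$, with $F_n^a = \mathrm{Re}\, F_n$, $F_n^b = \mathrm{Im}\, F_n$. Then
\[
L_{\mathrm{nl}}^*\rho_N = -\sum_{n=1}^N\bigl[\partial_{a_n}(F_n^a\rho_N) + \partial_{b_n}(F_n^b\rho_N)\bigr] = -\rho_N\,(\nabla\cdot F) - (F\cdot\nabla\rho_N).
\]
The second term vanishes by exactly the $L^2$-conservation identity \eqref{Eqn:L2growth-cancel} already proven in the paper. The first term is a Liouville-type statement which I would verify pointwise in $n$: expanding via $\partial_{a_n}c_{\pm n} = 1$, $\partial_{b_n}c_{\pm n} = \pm i$, the only terms in $F_n$ that depend on $(a_n,b_n)$ are those involving $c_{-n}$ (since the reality constraint $c_{-n}=\bar c_n$ means $c_{-n}$ depends on $a_n,b_n$), and these are precisely $c_{-n}c_{2n}+c_{2n}c_{-n}$ when $2n\leq N$ (terms of the form $c_{\pm n}c_0$ are excluded by the mean-zero constraint). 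One then computes $\partial_{a_n}F_n^a = -2nb_{2n}$ and $\partial_{b_n}F_n^b = +2nb_{2n}$, which cancel at each $n$, yielding $\nabla\cdot F=0$.

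The main obstacle, and the only step beyond routine, is this Liouville calculation: it requires careful bookkeeping of the real/imaginary decomposition together with the reality constraint $c_{-n} = \bar c_n$, and the cancellation relies essentially on the mean-zero condition $c_0 = 0$ that is built into both the equation and the definition of $\mu_N$. Once $L^*\rho_N = 0$ is established from all three pieces, the claim $(S_t^N)^*\mu_N = \mu_N$ for each $t\geq 0$ follows.
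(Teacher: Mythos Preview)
Your proposal is correct and follows essentially the same strategy as the paper: decompose the generator, show the Ornstein--Uhlenbeck piece preserves the Gaussian, and show the deterministic KdV drift preserves $\mu_N$ via Lebesgue-measure preservation together with $L^2$-conservation. The only difference is cosmetic: the paper groups the dispersive and nonlinear terms together as truncated KdV and invokes its Hamiltonian structure plus Liouville's theorem abstractly, whereas you separate them and verify $\nabla\cdot F=0$ by an explicit pointwise computation---a valid (and arguably more transparent) substitute for that appeal.
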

This provides formal evidence that the full stochastic PDE, \eqref{Eqn:SDKDV}, preserves spatial white noise, but the lack of well-defined dynamics for \eqref{Eqn:SDKDV} (in the infinite limit) represents an obstruction to the rigorous proof of invariance.

\begin{proof}[Proof of Claim \ref{Claim:formalWN}]

The proof of Claim \ref{Claim:formalWN} is based on a decomposition of \eqref{Eqn:SDKDV-trunc} into the (truncated) KdV equation, plus a rescaled Ornstein-Uhlenbeck process at each spatial frequency.  That is, \eqref{Eqn:SDKDV-trunc} is given by
\begin{align}
du^{N} &= \Bigg[u^{N}_{xx}-u^{N}_{xxx} + \mathbb{P}_{N}\bigg(\partial_{x}\big((u^{N})^{2}\big)\bigg)\Bigg]dt + \mathbb{P}_{N}\partial_{x}dW
\notag \\
&= \underbrace{-u^{N}_{xxx}dt + \mathbb{P}_{N}\bigg(\partial_{x}\big((u^{N})^{2}\big)\bigg)dt }_{\text{truncated KdV}} +  \underbrace{u^{N}_{xx}dt + \mathbb{P}_{N}\partial_{x}dW}_{\text{OU-processes}}.
\label{Eqn:SDKDV-decomp}
\end{align}
The truncated KdV equation preserves spatial white noise (in fact, the full KdV equation preserves spatial white noise, see \cite{OH,OQV,QV}), and an Ornstein-Uhlenbeck process leaves the normal distribution invariant.  We will combine these facts with \eqref{Eqn:SDKDV-decomp} to  prove Claim \ref{Claim:formalWN}.

The stochastic PDE \eqref{Eqn:SDKDV-trunc} is a system of stochastic differential equations in $\mathbb{C}^{N}$, the space of real and complex Fourier coefficients $(a_{n}+ib_{n})_{0<n\leq N}$ of the solution $u^{N}$.  This system is given by \eqref{Eqn:df1} and \eqref{Eqn:df2} (recall that the negative Fourier coefficients are determined by the preservation of reality: $a_{-n}
=a_{n}$, $b_{-n}=-b_{n}$).
By Proposition \ref{Prop:finiteGWP},
the flow map $S_{t}^{N}:E_{N}\rightarrow E_{N}$ for \eqref{Eqn:SDKDV-trunc}
is (almost surely) well-defined for all $t\geq 0$.  Then \eqref{Eqn:SDKDV-trunc}
preserves truncated white noise $\mu_{N}$ (ie. $(S_{t}^{N})^{*}\mu_{N}
=\mu_{N}$ for all $t\geq0$) if and only if the corresponding infinitesimal generator $\mathcal{L}^{N}:C^{\infty}
(\mathbb{C}^{N})\rightarrow \mathbb{R}$ satisfies $(\mathcal{L}^{N})^{*}\mu_{N} =0$, which means that
$\int_{\mathbb{C}^{N}}
\mathcal{L}^{N}f(x)d\mu_{N}(x)=0$ for all $f\in C^{\infty}(\mathbb{C}^{N})$.

With \eqref{Eqn:SDKDV-decomp} in mind, the generator $\mathcal{L}^{N}$ of the system \eqref{Eqn:SDKDV-trunc} can be written as $\mathcal{L}^{N}
=\mathcal{L}_{1}^{N}+\mathcal{L}_{2}^{N}$, where
\begin{align*}
\mathcal{L}_{1}^{N}f(a_{1},\ldots,b_{N})
= \sum_{0<n \leq N} F_{N}(a_{1},\ldots,b_{N})\cdot\nabla f(a_{1},\ldots,b_{N}),
\end{align*}
and
\begin{align*}
\mathcal{L}_{2}^{N}f(a_{1},\ldots,b_{N}) = \sum_{0<n\leq N}-n^{2}\Big(a_{n}
\frac{\partial}{\partial a_{n}}+b_{n}\frac{\partial}{\partial b_{n}} +
\frac{\partial^{2}}{\partial a_{n}^{2}}+\frac{\partial^{2}}{\partial a_{n}^{2}}\Big) f(a_{1},\ldots,b_{N}).
\end{align*}
Here
\begin{align*}
F_{N}(a_{1},\ldots,b_{N}) =
(h_{1}(a_{1},\ldots,b_{N}),\ldots,h_{2N}(a_{1},\ldots,b_{N})),
\end{align*}
with
\begin{align*}
h_{2k}(a_{1},\ldots,b_{N}) = (2k)^{3}b_{2k} +
\textit{Re}\big[(i2k)\sum_{\substack{|n_{1}|\leq N \\ |n_{2}|\leq N \\ 2k=n_{1}+n_{2}}}
(a_{n_{1}}+i b_{n_{1}})(a_{n_{2}}+i b_{n_{2}})\big], \\
h_{2k+1}(a_{1},\ldots,b_{N})
= -(2k)^{3}a_{2k}
+
\textit{Im}\big[(i2k)\sum_{\substack{|n_{1}|\leq N \\ |n_{2}|\leq N \\ 2k=n_{1}+n_{2}}}
(a_{n_{1}}+i b_{n_{1}})(a_{n_{2}}+i b_{n_{2}})\big],
\end{align*}
for $k=1,\ldots,N$.

Notice that $\mathcal{L}_{1}^{N}$ is the generator for the frequency truncated KdV equation
\begin{align}
u^{N}_{t}=-u^{N}_{xxx} + \partial_{x}\mathbb{P}_{N}((u^{N})^{2}).
\label{Eqn:KdVtrunc}
\end{align}
The equation \eqref{Eqn:KdVtrunc} is a Hamiltonian system in the space of Fourier coefficients, which, by Liousville's theorem, preserves the $2N$-dimensional Lebesgue measure $\prod_{n=1}^{N}d\hat{u}_{n}$.  Furthermore, the flow of \eqref{Eqn:KdVtrunc} leaves the $L^{2}$-norm of $u^{N}(t)$ invariant.  By \eqref{Eqn:WNdensity}, the truncated KdV equation  \eqref{Eqn:KdVtrunc} preserves finite-dimensional white noise $\mu_{N}$.  Equivalently, $(\mathcal{L}_{1}^{N})^{*}(\mu_{N})=0$ - ie. $\int\mathcal{L}_{1}^{N}f(x)d\mu_{N}(x)=0$ for all smooth $f$ on $\mathbb{C}^{N}$.

Next observe that $\mathcal{L}_{2}^{N}$ is the generator of an Ornstein-Uhlenbeck process at each spatial frequency.  That is, $\mathcal{L}^{N}_{2}$ is the generator of $2N$ decoupled stochastic differential equations, given by
\begin{align}
da_{n}&=(-n^{2}a_{n})dt - ndB^{2}_{n}, \notag \\
db_{n}&= (-n^{2}b_{n})dt + ndB^{1}_{n},
\label{Eqn:OU}
\end{align}
for $n=1,\ldots,N$.  Again, the negative Fourier coefficients are determined by preserving reality.  The SDE \eqref{Eqn:OU} has an explicit solution,
\begin{align}
a_{n}(t)=a_{n,0}e^{-n^{2}t} - \int_{0}^{t}e^{-n^{2}(t-s)}dB_{n}^{2} \notag \\
b_{n}(t)=b_{n,0}e^{-n^{2}t} + \int_{0}^{t}e^{-n^{2}(t-s)}dB_{n}^{1}.
\label{Eqn:OUsol}
\end{align}
Considering truncated white noise $\mu_{N}$ as initial data corresponds to taking $a_{n,0}, b_{n,0}$ i.i.d. $\mathcal{N}(0,\frac{1}{\sqrt{2}})$.  With these initial distributions, for each $t\geq 0$, and every $1\leq n\leq N$, the solutions $a_{n}(t)$ and $b_{n}(t)$ to \eqref{Eqn:OU} are i.i.d. $\mathcal{N}(0,\frac{1}{\sqrt{2}})$.  Indeed, from \eqref{Eqn:OUsol} we have that $a_{n}(t), b_{n}(t)$ are independent Gaussian processes with mean zero for each $t\geq 0$.  Then by the It\^{o} isometry,
\begin{align*}
\mathbb{E}((a_{n}(t))^{2}) &= \mathbb{E}((a_{n,0})^{2})e^{-2n^{2}t} +
\mathbb{E}((\int_{0}^{t}e^{-n^{2}(t-s)}dB_{n}^{2})^{2})
\\
&= \frac{1}{2}e^{-2n^{2}t} + \int_{0}^{t}e^{-2n^{2}(t-s)}ds \\
&= \frac{1}{2},
\end{align*}
and the same computation applies with $b_{n}(t)$.  We conclude that truncated white noise is invariant under the stochastic process \eqref{Eqn:OU}.  That is, $(\mathcal{L}_{2}^{N})^{*}\mu_{N}=0$.

We conclude that for every smooth $f$ on $\mathbb{C}^{N}$, $\int\mathcal{L}^{N}f(x)d\mu_{N}(x)=\int\mathcal{L}_{1}^{N}f(x)d\mu_{N}(x) + \int\mathcal{L}_{2}^{N}f(x)d\mu_{N}(x)=0$.  That is, $(\mathcal{L}^{N})^{*}\mu_{N}=0$, and truncated white noise $\mu_{N}$ is invariant under the flow of \eqref{Eqn:SDKDV-trunc}, for each $N>0$.  This argument does not pass over to the limit as $N\rightarrow \infty$.  Nonetheless, we have arrived at a formal justification of white noise invariance for \eqref{Eqn:SDKDV}.
\end{proof}

\subsection{Bilinear estimate}
\begin{proof}[Proof of Proposition \ref{Prop:bilinear}]

By \eqref{Eqn:cutoff}, the estimate \eqref{Eqn:bilinear} follows from
\begin{align*}
\|\chi_{[0,T]}(uv)_{x}\|_{X^{s,-\frac{1}{2}+\gamma}}
\lesssim \|\chi_{[0,T]}u\|_{X^{s,\frac{1}{2}-\varepsilon}}\|\chi_{[0,T]}v\|_{X^{s,\frac{1}{2}-\varepsilon}}.
\end{align*}
Then since $\chi_{[0,T]}\partial_{x}(uv) = \partial_{x}\big((\chi_{[0,T]}u)(\chi_{[0,T]}v)\big)$, it suffices to show that
\begin{align}
\|\partial_{x}\big((\chi_{[0,T]}u)(\chi_{[0,T]}v)\big)\|_{X^{s,-\frac{1}{2}+\gamma}}
\lesssim \|\chi_{[0,T]}u\|_{X^{s,\frac{1}{2}-\varepsilon}}\|\chi_{[0,T]}v\|_{X^{s,\frac{1}{2}-\varepsilon}}.
\label{Eqn:bilinear5}
\end{align}
Letting
\begin{align*}
f(n,\tau)&=\langle n\rangle^{s}\langle i(\tau-n^{3}) + n^{2}\rangle^{\frac{1}{2}-\varepsilon} (\widehat{\chi_{[0,T]}u})(n,\tau)  \\
g(n,\tau)&=\langle n\rangle^{s}\langle i(\tau-n^{3}) + n^{2}\rangle^{\frac{1}{2}-\varepsilon} (\widehat{\chi_{[0,T]}v})(n,\tau),
\end{align*}
the estimate (\ref{Eqn:bilinear5}) is equivalent to
\begin{align}
\|\beta(f,g)\|_{L^{2}_{n,\tau}} \lesssim \|f\|_{L^{2}_{n,\tau}}\|g\|_{L^{2}_{n,\tau}},
\label{Eqn:bilinear2}
\end{align}
where
\[
\beta(f,g)(n,\tau) :=\sum_{\substack{n_{1} \\n=n_{1}+n_{2}}} \int_{\tau=\tau_{1}+\tau_{2}}
\frac{|n|\langle n\rangle^{s}f(n_{1},\tau_{1})g(n_{2},\tau_{2})}
{\langle n_{1}\rangle^{s}\langle n_{2}\rangle^{s}\langle \sigma_{0}\rangle^{\frac{1}{2}-\gamma}
\langle \sigma_{1}\rangle^{\frac{1}{2}-\varepsilon
}\langle \sigma_{2}\rangle^{\frac{1}{2}-\varepsilon}}d\tau_{1}
\]
with $\sigma_{i}=i(\tau_{i}-n_{i}^{3}) + n_{i}^{2}$, for $i=1,2$, and $\sigma_{0}=i(\tau-n^{3}) + n^{2}$.

We proceed to justify \eqref{Eqn:bilinear2}.
In the analysis that follows, for a given $(n,n_{1},n_{2})\in \mathbb{Z}^{3}$, we order the magnitudes of the frequencies $|n|,|n_{1}|,|n_{2}|$
from largest to smallest, using capital letters and superscripts to denote the corresponding dyadic shell: ie, $N^{1}\geq N^{2}\geq N^{3}$.  We begin by performing the following computation, which will simplify subsequent estimates.
\begin{lemma}
If $n=n_{1}+n_{2}$ and $s\geq -\frac{1}{2}-\varepsilon$, then
\[\displaystyle\frac{|n|\langle n\rangle^{s}}{\langle n_{1}\rangle^{s}\langle n_{2}\rangle^{s}
\langle nn_{1}n_{2}\rangle^{\frac{1}{2}-\varepsilon}} \lesssim  |N^{1}|^{4\varepsilon}\].
\label{Lemma:kernelbound}
\end{lemma}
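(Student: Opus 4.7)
The plan is to reduce the lemma to elementary power counting based on two structural observations. First, the constraint $n=n_1+n_2$ forces the largest two magnitudes among $|n|,|n_1|,|n_2|$ to be comparable, so $N^1\sim N^2$; consequently $|nn_1n_2|\sim (N^1)^2 N^3$. Second, under the mean-zero convention none of $n,n_1,n_2$ vanishes, so $\langle\,\cdot\,\rangle\sim|\,\cdot\,|$ uniformly and every $N^j\geq 1$. With these in hand I would split the argument into two cases depending on whether the output frequency $|n|$ achieves the maximum or the minimum among the three.

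In the first case, $|n|\sim N^1$, which (by the triangle inequality) forces one of $|n_1|,|n_2|$ to be $\sim N^1$ and the other to be $\sim N^3$. Direct substitution reduces the left-hand side of the claimed bound to
\[
\frac{(N^1)^{2\varepsilon}}{(N^3)^{s+1/2-\varepsilon}}.
\]
Since $s\geq -1/2-\varepsilon$ one has $s+1/2-\varepsilon\geq -2\varepsilon$, so $(N^3)^{-(s+1/2-\varepsilon)}$ is at most $\max(1,(N^3)^{2\varepsilon})\leq (N^1)^{2\varepsilon}$, producing the desired bound $(N^1)^{4\varepsilon}$.

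In the second case, $|n|\sim N^3$ and $|n_1|,|n_2|\sim N^1$. A parallel substitution yields
\[
\frac{(N^3)^{s+1/2+\varepsilon}}{(N^1)^{2s+1-2\varepsilon}}.
\]
The lower bound $s\geq -1/2-\varepsilon$ guarantees $s+1/2+\varepsilon\geq 0$, so I may estimate $(N^3)^{s+1/2+\varepsilon}\leq (N^1)^{s+1/2+\varepsilon}$; the ratio then simplifies to $(N^1)^{-s-1/2+3\varepsilon}$, and applying $-s-1/2\leq \varepsilon$ once more bounds this by $(N^1)^{4\varepsilon}$.

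No serious obstacle is anticipated: the argument is pure bookkeeping once the reductions $N^1\sim N^2$ and $|nn_1n_2|\sim (N^1)^2 N^3$ are in place. The only subtlety is that, for $s$ near $-1/2-\varepsilon$, certain exponents of $N^3$ are slightly negative; however this is absorbed uniformly because $N^3\geq 1$ and those exponents are all $O(\varepsilon)$, which is exactly what the factor $(N^1)^{4\varepsilon}$ on the right-hand side is designed to accommodate.
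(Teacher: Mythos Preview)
Your proof is correct and follows essentially the same approach as the paper's own argument: the same two-case split according to whether $|n|\sim N^1$ or $|n|\sim N^3$, the same algebraic reduction of the quotient using $N^1\sim N^2$ and $|nn_1n_2|\sim (N^1)^2 N^3$, and the same use of the hypothesis $s\ge -\tfrac12-\varepsilon$ to bound the residual powers of $N^3$ and $N^1$. The only differences are cosmetic rearrangements of the exponents in Case~2.
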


\noindent We will also require the following Calculus inequalities:

\begin{lemma}
Let $0<\delta_{1}\leq\delta_{2}$ satisfy $\delta_{1}+\delta_{2}>1$, and let $a\in\mathbb{R}$, then
\[ \int_{-\infty}^{\infty}\frac{d\theta}{\langle\theta\rangle^{\delta_{1}}\langle a-\theta\rangle^{\delta_{2}}}
 \lesssim  \frac{1}{\langle a\rangle^{\alpha}},\]
where $\alpha=\delta_{1}-(1-\delta_{2})_{+}$.  Recall that $(\lambda)_{+}:= \lambda$ if $\lambda>0$, $= \varepsilon>0$ if $\lambda=0$, and $=0$ if $\lambda<0$.
\label{Lemma:integralineq}
\end{lemma}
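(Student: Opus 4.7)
The plan is to prove this standard one-dimensional convolution-type estimate by splitting the integration domain according to the relative sizes of $|\theta|$, $|a-\theta|$, and $|a|$. When $|a| \lesssim 1$ we have $\langle a\rangle \sim 1$, and the integrand decays like $\langle\theta\rangle^{-(\delta_1+\delta_2)}$, which is integrable thanks to the assumption $\delta_1 + \delta_2 > 1$. This handles the small-$|a|$ regime, so one may reduce to $|a| \gg 1$.

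For $|a| \gg 1$, decompose $\mathbb{R} = I_1 \cup I_2 \cup I_3$ with $I_1 = \{|\theta| \leq |a|/4\}$ (on which $\langle a - \theta\rangle \sim \langle a\rangle$), $I_2 = \{|\theta - a| \leq |a|/4\}$ (on which $\langle\theta\rangle \sim \langle a\rangle$), and $I_3$ the complement (on which $\langle\theta\rangle \sim \langle a - \theta\rangle \gtrsim |a|$). On $I_1$, pull $\langle a-\theta\rangle^{-\delta_2}$ out of the integral; the remaining one-dimensional integral $\int_{|\theta|\leq |a|/4}\langle\theta\rangle^{-\delta_1}\,d\theta$ is of size $O(\langle a\rangle^{(1-\delta_1)_+})$, yielding a total $\lesssim \langle a\rangle^{-\delta_2 + (1-\delta_1)_+}$. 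By the substitution $\theta \mapsto a-\theta$, the $I_2$ contribution is analogously bounded by $\langle a\rangle^{-\delta_1 + (1-\delta_2)_+}$. On $I_3$, estimating the integrand by $\langle\theta\rangle^{-(\delta_1+\delta_2)}$ and integrating over $|\theta|\gtrsim |a|$ gives the bound $\langle a\rangle^{1-\delta_1-\delta_2}$.

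To conclude, one checks that each of the three contributions is dominated by $\langle a\rangle^{-\alpha}$ with $\alpha = \delta_1 - (1-\delta_2)_+$. The $I_2$ bound matches the target exponent exactly. For $I_3$, the inequality reduces to $1 - \delta_2 \leq (1-\delta_2)_+$, which is obvious. For $I_1$, one needs $(1-\delta_1)_+ - (1-\delta_2)_+ \leq \delta_2 - \delta_1$, which is verified by a short case analysis using $\delta_1 \leq \delta_2$: the inequality is an equality when both $\delta_j \leq 1$, holds trivially when both are $\geq 1$, and follows from $\delta_2 \geq 1 \geq \delta_1$ in the remaining case. The only real technicality is the borderline situation $\delta_j = 1$, where the one-dimensional integral produces a logarithmic factor; this is absorbed by the convention $(\lambda)_+ = \varepsilon$ when $\lambda = 0$, which is exactly why that convention appears in the statement of the lemma.
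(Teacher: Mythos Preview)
Your argument is correct and is the standard domain-splitting proof of this convolution inequality. The paper itself does not supply a proof of this lemma; it simply cites \cite{GTV} and \cite{KPV2}, where exactly this decomposition (into the regions where one of $|\theta|$, $|a-\theta|$ is small and the far region) is carried out. So there is no meaningful difference in approach to report: what you wrote is essentially the argument in those references.

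One minor quibble: on $I_3$ you assert $\langle\theta\rangle \sim \langle a-\theta\rangle$, which is not literally true throughout the region (only $\gtrsim |a|$ for both is guaranteed). The clean way to get the $I_3$ bound is to further split $I_3$ into $\{|a|/4 < |\theta| \leq 2|a|\}$, where both weights are $\sim |a|$ and the interval has length $\sim |a|$, and $\{|\theta| > 2|a|\}$, where indeed $\langle a-\theta\rangle \sim \langle\theta\rangle$; both pieces give $\langle a\rangle^{1-\delta_1-\delta_2}$. This does not affect your conclusion.
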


\begin{lemma}
Let $\delta>\frac{1}{2}$, $n\neq0$, then
\[\Big\|\sum_{n_{1}\neq 0, n_{1}\neq n}\frac{1}{(1+|\mu-n_{1}(n-n_{1})|)^{\delta}}\Big\|_{L^{\infty}_{\mu,n}}<c<\infty.\]
\label{Lemma:supineq}
\end{lemma}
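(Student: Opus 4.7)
The plan is to reduce the sum to a standard counting problem for integer squares by completing the square. Writing $k = 2n_{1} - n$, the identity $n_{1}(n - n_{1}) = (n^{2} - k^{2})/4$ together with the change of variable $\lambda := n^{2} - 4\mu$ transforms the sum into
\[ \sum_{\substack{k \equiv n \,(\mathrm{mod}\, 2) \\ k \neq \pm n}} \frac{1}{\bigl(1 + |k^{2} - \lambda|/4\bigr)^{\delta}}. \]
It therefore suffices to show that $\Sigma(\lambda) := \sum_{k \in \mathbb{Z}} (1 + |k^{2} - \lambda|)^{-\delta}$ is bounded uniformly in $\lambda \in \mathbb{R}$.

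First I would dispatch the easy regimes. When $\lambda \leq 0$, one has $|k^{2} - \lambda| \geq k^{2}$, so $\Sigma(\lambda) \leq \sum_{k \in \mathbb{Z}} (1 + k^{2})^{-\delta} < \infty$ for $\delta > 1/2$. For $\lambda > 0$, the terms with $|k| \geq 2\sqrt{\lambda}$ satisfy $|k^{2} - \lambda| \geq 3k^{2}/4$, and are controlled by the same convergent tail. Similarly, the terms with $|k| \leq \sqrt{\lambda}/2$ satisfy $|k^{2} - \lambda| \geq 3\lambda/4$, and so their total contribution is at most $O(\sqrt{\lambda}) \cdot \lambda^{-\delta} = O(\lambda^{1/2 - \delta})$, which is uniformly bounded for $\delta > 1/2$.

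The essential regime is $\lambda > 0$ with $|k|$ near $\pm\sqrt{\lambda}$. Here I would let $k_{0}$ be the nearest integer to $\sqrt{\lambda}$, write $k = k_{0} + i$, and use the expansion
\[ k^{2} - \lambda = (k_{0}^{2} - \lambda) + 2k_{0} i + i^{2}. \]
Since $|k_{0}^{2} - \lambda| \leq k_{0} + \tfrac{1}{4} \lesssim \sqrt{\lambda}$ and $2k_{0} \approx 2\sqrt{\lambda}$, a short calculation yields the spacing estimate $|k^{2} - \lambda| \gtrsim \sqrt{\lambda}\,|i|$ for $2 \leq |i| \lesssim \sqrt{\lambda}$; the remaining indices $i = 0, \pm 1$ contribute $O(1)$ directly. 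Summing,
\[ \sum_{2 \leq |i| \lesssim \sqrt{\lambda}} \bigl(1 + \sqrt{\lambda}\,|i|\bigr)^{-\delta} \lesssim \lambda^{-\delta/2} \sum_{i=2}^{\lfloor\sqrt{\lambda}\rfloor} i^{-\delta} \lesssim 1 + \lambda^{(1-2\delta)/2}, \]
which is uniformly bounded in $\lambda$ precisely when $\delta > 1/2$. A symmetric estimate handles $k$ near $-\sqrt{\lambda}$, and combining the four regimes completes the proof.

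The principal difficulty is this critical regime near $\pm\sqrt{\lambda}$. The threshold $\delta > 1/2$ is sharp and arises from an exact balance: there are approximately $\sqrt{\lambda}$ indices $k$ with $|k| \sim \sqrt{\lambda}$, and the values of $|k^{2} - \lambda|$ for these indices are spaced by approximately $\sqrt{\lambda}$. Any weaker decay would cause the final geometric sum to diverge as $\lambda \to \infty$.
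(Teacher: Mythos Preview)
Your proof is correct. The paper does not supply its own argument for this lemma; it simply cites \cite{KPV2} (and \cite{GTV} for the companion integral inequality). Your approach---completing the square via $k = 2n_{1} - n$ to reduce to a uniform bound on $\sum_{k}(1+|k^{2}-\lambda|)^{-\delta}$, then splitting into the regimes $\lambda\leq 0$, $|k|\geq 2\sqrt{\lambda}$, $|k|\leq\sqrt{\lambda}/2$, and $|k|\sim\sqrt{\lambda}$---is precisely the standard route and is essentially how the cited reference handles it. The key point, which you identify correctly, is the spacing estimate $|k^{2}-\lambda|\gtrsim\sqrt{\lambda}\,|i|$ for $k=k_{0}+i$ with $k_{0}$ the nearest integer to $\sqrt{\lambda}$; this is what makes $\delta>\tfrac{1}{2}$ the sharp threshold.

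One very minor remark: your bound $\lambda^{-\delta/2}\sum_{i=2}^{\lfloor\sqrt{\lambda}\rfloor} i^{-\delta}\lesssim 1+\lambda^{(1-2\delta)/2}$ as written covers $\tfrac{1}{2}<\delta<1$; for $\delta\geq 1$ the partial sum is $O(\log\lambda)$ or $O(1)$ and the conclusion is even stronger, so the statement still holds. It would not hurt to say this explicitly.
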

\noindent Proofs of Lemmas \ref{Lemma:integralineq} and \ref{Lemma:supineq} can be found in \cite{GTV} and \cite{KPV2}.  We proceed with the proof of Lemma
\ref{Lemma:kernelbound}.

\begin{proof}[Proof of Lemma \ref{Lemma:kernelbound}]
We consider two cases depending on the relative sizes of $n,n_{1},n_{2}$.  Recall that as $n=n_{1}+n_{2}$,
we always have $N^{1}\sim N^{2}$.

\vspace{0.1in}
\noindent $\bullet$ \textbf{Case 1:} $|n|\sim N^{1}$. \newline
Then
\begin{align*}
\frac{|n|\langle n\rangle^{s}}{\langle n_{1}\rangle^{s}\langle n_{2}\rangle^{s}\langle n n_{1}n_{2}\rangle
^{\frac{1}{2}-\varepsilon}}
&\sim \frac{|N^{1}|^{2\varepsilon}}{|N^{3}|^{s+\frac{1}{2} -\varepsilon}} \\
&\leq |N^{1}|^{2\varepsilon}|N^{3}|^{2\varepsilon}  \ \ \text{since}\ s\geq -\frac{1}{2}-\varepsilon,  \\
&\leq |N^{1}|^{4\varepsilon}.
\end{align*}

\vspace{0.1in}
\noindent $\bullet$ \textbf{Case 2:} $|n|\sim N^{3}$,
so that $|n_{1}|\sim |n_{2}|\sim N^{1}$.
\newline
Then
\begin{align*}
\frac{|n|\langle n\rangle^{s}}{\langle n_{1}\rangle^{s}\langle n_{2}\rangle^{s}\langle n n_{1}n_{2}
\rangle^{\frac{1}{2}-\varepsilon}}
&\sim \Big|\frac{N^{3}}{N^{1}}\Big|^{s+\frac{1}{2}}\frac{|N^{3}|^{\varepsilon}}
{|N^{1}|^{s+\frac{1}{2}-2\varepsilon}} \\
&\leq \Big|\frac{N^{3}}{N^{1}}\Big|^{s+\frac{1}{2}+\varepsilon}
\frac{|N^{1}|^{4\varepsilon}}{|N^{1}|^{s+\frac{1}{2} + \varepsilon}}  \\
&\leq |N^{1}|^{4\varepsilon},   \ \ \text{since}\ s\geq -\frac{1}{2}-\varepsilon.
\end{align*}
This completes the Proof of Lemma \ref{Lemma:kernelbound}.

\end{proof}

We now turn to the proof of (\ref{Eqn:bilinear2}).  In the estimates that follow, we will take $\gamma=\varepsilon$ for simplicity.  For $i=0,1,2$, let
\[ A_{i}=\{(n,n_{1},n_{2},\tau,\tau_{1},\tau_{2})\in
\mathbb{Z}^{3}\times\mathbb{R}^{3}:
\max(|\sigma_{0}|,|\sigma_{1}|,|\sigma_{2}|)=|\sigma_{i}|\},\]
and let $\beta_{i}(f,g)$ denote the contribution to $\beta(f,g)$ coming from $A_{i}$.
We separately estimate each $\beta_{i}(f,g)$, $i=0,1,2$.

\vspace{0.1in}
\noindent $\bullet$ \textbf{Case 1:}
$\max(|\sigma_{0}|,|\sigma_{1}|,|\sigma_{2}|)=|\sigma_{0}|$.
\newline

From the algebraic relation $\max(|\sigma_{0}|,|\sigma_{1}|,|\sigma_{2}|) \geq |nn_{1}n_{2}|$, we have
\begin{align}
\|\beta_{0}(f,g)\|_{L^{2}_{n,\tau}} &\leq \Big\| \sum_{\substack{n_{1} \\n=n_{1}+n_{2}}} \int_{\tau=\tau_{1}+\tau_{2}}
\frac{|n|\langle n\rangle^{s}|f(n_{1},\tau_{1})||g(n_{2},\tau_{2})|}{\langle n_{1}\rangle^{s}\langle n_{2}\rangle^{s}
\langle n n_{1}n_{2}\rangle^{\frac{1}{2}-\varepsilon}\langle \sigma_{1}\rangle^{\frac{1}{2}-\varepsilon}\langle
\sigma_{2}\rangle^{\frac{1}{2}-\varepsilon
}}d\tau_{1}
\Big\|_{L^{2}_{n,\tau}} \notag \\
&\leq \Big\| \sum_{\substack{n_{1} \\n=n_{1}+n_{2}}} \int_{\tau=\tau_{1}+\tau_{2}}
\frac{|N^{1}|^{4\varepsilon}|f(n_{1},\tau_{1})||g(n_{2},\tau_{2})|}
{\langle \sigma_{1}\rangle^{\frac{1}{2}-\varepsilon}\langle \sigma_{2}\rangle^{\frac{1}{2}-\varepsilon}}d\tau_{1}
\Big\|_{L^{2}_{n,\tau}}
\label{Eqn:case1bound}
\end{align}
by Lemma \ref{Lemma:kernelbound}.
We will cancel the factor of $|N^{1}|^{4\varepsilon}$ by considering the relative sizes of
$n_{1},n_{2}$.

\vspace{0.1in}
\noindent $\bullet$ \textbf{Case 1(A):}  $|n_{1}|\sim N^{1}$.
\newline
Then $\displaystyle|\sigma_{1}|\geq n_{1}^{2} \sim |N^{1}|^{2},\ \
\Rightarrow \frac{1}{\langle \sigma_{1}\rangle^{2\varepsilon}}
\lesssim \frac{1}{|N^{1}|^{4\varepsilon}}$.
With this inequality, followed by Cauchy-Schwarz, we find

\begin{align*}
\|\beta_{0}(f,g)\|_{L^{2}_{n,\tau}} &\leq  \Big\| \sum_{\substack{n_{1} \\n=n_{1}+n_{2}}} \int_{\tau=\tau_{1}+\tau_{2}}
\frac{|f(n_{1},\tau_{1})||g(n_{2},\tau_{2})|}{\langle \sigma_{1}\rangle
^{\frac{1}{2}-3\varepsilon}\langle \sigma_{2}\rangle^{\frac{1}{2}-\varepsilon}}d\tau_{1}
\Big\|_{L^{2}_{n,\tau}} \\
&\leq \Big\| \Big(\sum_{\substack{n_{1} \\n=n_{1}+n_{2}}} \int_{\tau=\tau_{1}+\tau_{2}}
|f(n_{1},\tau_{1})|^{2}|g(n_{2},\tau_{2})|^{2}d\tau_{1}\Big)^{\frac{1}{2}} \\
&\ \ \ \ \ \ \ \ \ \ \ \ \ \cdot\Big(\sum_{\substack{n_{1} \\n=n_{1}+n_{2}}} \int_{\tau=\tau_{1}+\tau_{2}}
\frac{d\tau_{1}}{\langle \sigma_{1}\rangle^{1-6\varepsilon}
\langle \sigma_{2}\rangle^{1-2\varepsilon}}\Big)^{\frac{1}{2}}\Big\|_{L^{2}_{n,\tau}}.
\end{align*}

Now observe that

\begin{align*}
\int_{\tau=\tau_{1}+\tau_{2}}&
\frac{d\tau_{1}}{\langle \sigma_{1}\rangle^{1-6\varepsilon}
\langle \sigma_{2}\rangle^{1-2\varepsilon}}  \\
&= \int_{-\infty}^{\infty}\frac{d\tau_{1}}{\langle i(\tau_{1}-n_{1}^{3}) + n_{1}^{2}\rangle^{1-6\varepsilon} \langle i(\tau-\tau_{1}-(n-n_{1})^{3}) + (n-n_{1})^{2}\rangle^{1-2\varepsilon}} \\
&\leq \int_{-\infty}^{\infty}\frac{d\tau_{1}}{\langle \tau_{1}-n_{1}^{3}\rangle^{1-6\varepsilon} \langle
\tau-\tau_{1}-(n-n_{1})^{3}\rangle^{1-2\varepsilon}}  \ \ \ \ \\
&=  \int_{-\infty}^{\infty}\frac{d\tau_{1}}{\langle \theta\rangle^{1-6\varepsilon}
\langle a-\theta \rangle^{1-2\varepsilon}}, \ \ \ \ \ \text{with} \ \ \theta=\tau_{1}-n_{1}^{3}, \ \
a=\tau-n^{3}+3nn_{1}(n-n_{1}), \\
&\leq \frac{1}{\langle a\rangle^{1-8\varepsilon}},
\end{align*}

by Lemma \ref{Lemma:integralineq}.
Letting

\[M_{n,\tau}:=\Big(\sum_{n_{1},n_{1}\neq 0, n_{1}\neq n}\frac{1}{\langle \tau - n^{3} + 3n n_{1}(n-n_{1})
\rangle^{1-8\varepsilon}} \Big)^{\frac{1}{2}}.\]

For $n\neq 0$  we can let
$\mu = \frac{1}{3}\big(\frac{\tau}{n}-n^{2}\big)$, and find
\[ \frac{1}{\langle \tau - n^{3} + 3n n_{1}(n-n_{1})\rangle} \leq \frac{1}{\langle\mu -n_{1}(n-n_{1})\rangle}. \]

This leads to
\begin{align}
\sup_{\substack{n,\tau \\ n\neq 0}}M_{n,\tau} &=
\sup_{\substack{n,\tau \\ n\neq 0}}\Big(\sum_{\substack{n_{1}\\n_{1}\neq0,n_{1}\neq n }}
\frac{1}{\langle \tau - n^{3} + 3n n_{1}(n-n_{1})\rangle^{1-8\varepsilon}}\Big)^{\frac{1}{2}} \notag\\
&\leq \sup_{\substack{n,\mu \\ n\neq 0}}\Big(\sum_{\substack{n_{1}\\n_{1}\neq0,n_{1}\neq n }}
\frac{1}{\langle \mu - n_{1}(n-n_{1})\rangle^{1-8\varepsilon}}\Big)^{\frac{1}{2}} \notag\\
&= C < \infty, \label{Eqn:supbdd}
\end{align}

\noindent by Lemma \ref{Lemma:supineq}, if $\varepsilon<\frac{1}{16}$.

Returning to the proof of (\ref{Eqn:bilinear2}), we pull out $\sup_{n,\tau}M_{n,\tau}$, apply the estimate
(\ref{Eqn:supbdd}), and find
\begin{align*}
\|\beta_{0}(f,g)\|_{L^{2}_{n,\tau}} &\lesssim  \Big(\sup_{\substack{n,\tau\\ n\neq 0}}M_{n,\tau}\Big)\Big\|
\Big(\sum_{\substack{n_{1} \\n=n_{1}+n_{2}}} \int_{\tau=\tau_{1}+\tau_{2}}
|f(n_{1},\tau_{1})|^{2}|g(n_{2},\tau_{2})|^{2}d\tau_{1}\Big)^{\frac{1}{2}}\Big\|_{L^{2}_{n,\tau}} \\
&\lesssim   \Big\| \Big(\sum_{\substack{n_{1} \\n=n_{1}+n_{2}}} \int_{\tau=\tau_{1}+\tau_{2}}
|f(n_{1},\tau_{1})|^{2}|g(n_{2},\tau_{2})|^{2}d\tau_{1}\Big)^{\frac{1}{2}}\Big\|_{L^{2}_{n,\tau}}  \\
&= \|f\|_{L^{2}_{n,\tau}}\|g\|_{L^{2}_{n,\tau}} \\
\end{align*}
\noindent by Fubini's Theorem.  This completes the analysis of Case 1(A).

\vspace{0.1in}
\noindent $\bullet$ \textbf{Case 1(B):}  $|n|\sim |n_{2}| \sim N^{1}$, $|n_{1}|\sim N^{3}$.
\newline

Then $\displaystyle|\sigma_{2}|\gtrsim |N^{1}|^{2},\ \  \Rightarrow \frac{1}{\langle \sigma_{2}\rangle^{2\varepsilon}} \lesssim  \frac{1}{|N^{1}|^{4\varepsilon}}$.  With \eqref{Eqn:case1bound} this leads to
\begin{align*}
\|\beta_{0}(f,g)\|_{L^{2}_{n,\tau}} &\lesssim  \Big\| \sum_{\substack{n_{1} \\n=n_{1}+n_{2}}} \int_{\tau=\tau_{1}+\tau_{2}}
\frac{|f(n_{1},\tau_{1})||g(n_{2},\tau_{2})|}{\langle \sigma_{1}\rangle^{\frac{1}{2}-\varepsilon}\langle \sigma_{2}\rangle^{\frac{1}{2}-3\varepsilon}}d\tau_{1}
\Big\|_{L^{2}_{n,\tau}} \\
&= \Big\| \sum_{\substack{n_{2} \\n=n_{1}+n_{2}}} \int_{\tau=\tau_{1}+\tau_{2}}
\frac{|f(n_{1},\tau_{1})||g(n_{2},\tau_{2})|}{\langle \sigma_{1}\rangle^{\frac{1}{2}-\varepsilon}\langle \sigma_{2}\rangle^{\frac{1}{2}-3\varepsilon}}d\tau_{2}
\Big\|_{L^{2}_{n,\tau}} \\
&\lesssim \|f\|_{L^{2}_{n,\tau}}\|g\|_{L^{2}_{n,\tau}},
\end{align*}
by the analysis done in Case 1(A).  This completes Case 1.  That is, the proof of \eqref{Eqn:bilinear2} in the region $A_{0}$ is complete.

\vspace{0.1in}
\noindent $\bullet$ \textbf{Case 2:}
$|\sigma_{1}|=\max(|\sigma_{0}|,|\sigma_{1}|,|\sigma_{2}|)$.
\newline

Using duality, we choose to establish the equivalent estimate:
\begin{align}
\|\tilde{\beta}_{1}(g,h)\|_{L^{2}_{n_{1},\tau_{1}}} \lesssim \|g\|_{L^{2}_{n,\tau}}\|h\|_{L^{2}_{n,\tau}},
\label{Eqn:bilinear3}
\end{align}
where
\[
\tilde{\beta}(g,h)(n_{1},\tau_{1}):=\sum_{\substack{n \\n=n_{1}+n_{2}}} \int_{\tau=\tau_{1}+\tau_{2}}
\frac{|n|\langle n\rangle^{s}h(n,\tau)g(n_{2},\tau_{2})}
{\langle n_{1}\rangle^{s}\langle n_{2}\rangle^{s}\langle \sigma_{0}\rangle^{\frac{1}{2}-\varepsilon}
\langle \sigma_{1}\rangle^{\frac{1}{2}-\varepsilon}\langle \sigma_{2}\rangle^{\frac{1}{2}-\varepsilon}}d\tau.
\]

\noindent From the algebraic relation $\max(|\sigma_{0}|,|\sigma_{1}|,|\sigma_{2}|) \geq |nn_{1}n_{2}|$, and by Lemma
\ref{Lemma:kernelbound}, we have
\begin{align}
\|\tilde{\beta}_{1}(g,h)\|_{L^{2}_{n_{1},\tau_{1}}}
&\leq \Big\| \sum_{\substack{n \\n=n_{1}+n_{2}}} \int_{\tau=\tau_{1}+\tau_{2}}
\frac{|N^{1}|^{4\varepsilon}|h(n,\tau)||g(n_{2},\tau_{2})|}
{\langle \sigma_{0}\rangle^{\frac{1}{2}-\varepsilon}\langle \sigma_{2}\rangle^{\frac{1}{2}-\varepsilon}}d\tau
\Big\|_{L^{2}_{n_{1},\tau_{1}}}.
\label{Eqn:case2bound}
\end{align}
Again, we handle the factor of $|N^{1}|^{4\varepsilon}$ by considering the relative sizes of $n_{1},n_{2}$.

\vspace{0.1in}
\noindent $\bullet$ \textbf{Case 2(A):}  $|n|\sim N^{1}$.
\newline

Then $\displaystyle|\sigma_{0}|\gtrsim  |N^{1}|^{2},\ \
\Rightarrow \frac{1}{\langle \sigma_{0}\rangle^{2\varepsilon}}
\lesssim \frac{1}{|N^{1}|^{4\varepsilon}}$.
Combining this estimate with \eqref{Eqn:case2bound}, and applying Cauchy-Schwarz, we find
\begin{align*}
\|\tilde{\beta}_{1}(g,h)\|_{L^{2}_{n_{1},\tau_{1}}}
&\leq \Big\| \sum_{\substack{n \\n=n_{1}+n_{2}}} \int_{\tau=\tau_{1}+\tau_{2}}
\frac{|h(n,\tau)||g(n_{2},\tau_{2})|}
{\langle \sigma_{0}\rangle^{\frac{1}{2}-3\varepsilon}
\langle \sigma_{2}\rangle^{\frac{1}{2}-\varepsilon}}d\tau
\Big\|_{L^{2}_{n_{1},\tau_{1}}}  \\
&\leq \|g\|_{L^{2}_{n,\tau}}\|h\|_{L^{2}_{n,\tau}},
\end{align*}
by the analysis done in Case 1(A).

\vspace{0.1in}
\noindent $\bullet$ \textbf{Case 2(B):}  , $|n_{1}|\sim |n_{2}|\sim N^{1}$, $|n|\sim N^{3}$.
\newline

Then $\displaystyle|\sigma_{2}|\gtrsim |N^{1}|^{2},\ \  \Rightarrow \frac{1}{\langle \sigma_{2}\rangle^{2\varepsilon}}
\lesssim  \frac{1}{|N^{1}|^{4\varepsilon}}$.  Combined with \eqref{Eqn:case2bound} this leads to
\begin{align*}
\|\tilde{\beta}_{1}(g,h)\|_{L^{2}_{n_{1},\tau_{1}}}
&\lesssim \Big\| \sum_{\substack{n \\n=n_{1}+n_{2}}} \int_{\tau=\tau_{1}+\tau_{2}}
\frac{|h(n,\tau)||g(n_{2},\tau_{2})|}
{\langle \sigma_{0}\rangle^{\frac{1}{2}-\varepsilon}
\langle \sigma_{2}\rangle^{\frac{1}{2}-3\varepsilon}}d\tau \Big\|_{L^{2}_{n_{1},\tau_{1}}}  \\
&= \Big\| \sum_{\substack{n_{2} \\n=n_{1}+n_{2}}} \int_{\tau=\tau_{1}+\tau_{2}}
\frac{|h(n,\tau)||g(n_{2},\tau_{2})|}
{\langle \sigma_{0}\rangle^{\frac{1}{2}-\varepsilon}
\langle \sigma_{2}\rangle^{\frac{1}{2}-3\varepsilon}}d\tau_{2} \Big\|_{L^{2}_{n_{1},\tau_{1}}} \\
&\lesssim \|g\|_{L^{2}_{n,\tau}}\|h\|_{L^{2}_{n,\tau}},
\end{align*}
by the analysis done in Case 1(A).

\vspace{0.1in}

\noindent $\bullet$ \textbf{Case 3:}
$|\sigma_{2}|=\max(|\sigma_{0}|,|\sigma_{1}|,|\sigma_{2}|)$.
\newline

Observe that we may rewrite (\ref{Eqn:bilinear2}) as
\begin{align}
\|\beta(f,g)\|_{L^{2}_{n,\tau}} \lesssim \|f\|_{L^{2}_{n,\tau}}\|g\|_{L^{2}_{n,\tau}},
\label{Eqn:bilinear4}
\end{align}
with
\[
\beta(f,g):=\sum_{\substack{n_{2} \\n=n_{1}+n_{2}}} \int_{\tau=\tau_{1}+\tau_{2}}
\frac{|n|\langle n\rangle^{s}f(n_{1},\tau_{1})g(n_{2},\tau_{2})}
{\langle n_{1}\rangle^{s}\langle n_{2}\rangle^{s}\langle \sigma_{0}\rangle^{\frac{1}{2}-\varepsilon}
\langle \sigma_{1}\rangle^{\frac{1}{2}-\varepsilon}\langle \sigma_{2}\rangle^{\frac{1}{2}-\varepsilon}}d\tau_{2}.
\]
Justifying (\ref{Eqn:bilinear2}) in the region $A_{2}$ is therefore equivalent to justifying (\ref{Eqn:bilinear2})
in the region $A_{1}$, and we are done by the analysis in Case 2.  This completes Case 3, and the proof of
Proposition \ref{Prop:bilinear}.

\end{proof}

\subsection{Frequency truncated global well-posedness}

In this subsection we establish Proposition \ref{Prop:finiteGWP}.

First observe that, for any fixed $N>0$, the proof of Theorem \ref{Thm:Contraction} is easily modified to produce analogous statements for the truncated system \eqref{Eqn:SDKDV-trunc}.
However, for any fixed $N>0$,
$u_{0}^{N}\in H^{s}(\mathbb{T})$ and
$\phi^{N}\in HS(L^{2},H^{s+1-2\varepsilon})$, for any $s,\varepsilon \in \mathbb{R}$,
regardless of the conditions placed on $u_{0}$, $\phi$.  That is, for fixed $N>0$, we can relax the conditions placed on $u_{0}$ and $\phi$ before applying (the finite-dimensional modification of) Theorem \ref{Thm:Contraction}.  In this subsection, we will consider $u_{0}\in H^{s}(\mathbb{T})$ and $\phi\in HS(L^{2},H^{s})$, since with $s<-\frac{1}{2}$, this condition admits $\phi=\text{Id}$, which is the long-term goal of our study.  Indeed, given $u_{0}\in H^{s}(\mathbb{T})$ and $\phi\in L^{2}(L^{2},H^{s})$, by Theorem \ref{Thm:Contraction} there is a unique solution $u^{N}(t)$ to \eqref{Eqn:SDKDV-trunc} for $t\in [0,T_{\omega,N}]$ where
\begin{align}
T_{\omega,N} = \min \Big\{T>0:2CT^{\varepsilon-}\Big(\|u^{N}_{0}\|_{H^{s}(\mathbb{T})} + 2 + \|\chi_{[0,T]}\Phi^{N}\|_{X^{s,\frac{1}{2}-\varepsilon}}\Big)^{2}
\geq 1\Big\}.
\label{Eqn:stoptime2}
\end{align}

Before we present the proof of Proposition \ref{Prop:finiteGWP} we will use \eqref{Eqn:dL2} to control the expectation of higher moments of the $L^{2}$-norm of the solution to \eqref{Eqn:SDKDV-trunc-Duh}.  In particular, we establish the following lemma.

\begin{lemma}
Suppose $\tilde{\Omega}\subset \Omega$ is such that for all $\omega\in \tilde{\Omega}$, there exists
$u^{N}(t)$, a solution to \eqref{Eqn:SDKDV-trunc-Duh} for $t\in[0,T]$, with $T\leq T_{\omega,N}$.  Then for all $t\in [0,T]$, and any integer $p\geq 0$,
\begin{align}
\mathbb{E}\Big(\sup_{0\leq t\leq T}\|u^{N}(t)\|_{L^{2}_{x}}^{2p}\cdot\chi_{\tilde{\Omega}}\Big) \leq C_{p,N},
\label{Eqn:highmombound}
\end{align}
where $C_{p,N}=C(p,T,\|u^{N}_{0}\|_{L^{2}_{x}},\|\phi^{N}\|_{H^{1}})$ is sufficiently large.
\label{Lemma:highmoments}
\end{lemma}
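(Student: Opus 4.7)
I would argue by induction on $p$. The base case $p=0$ is trivial since $\mathbb{E}(\chi_{\tilde\Omega})\leq 1$, and the case $p=1$ is essentially the content of \eqref{Eqn:GWPL2bound} (adapted with the cutoff $\chi_{\tilde\Omega}$). For the inductive step, assuming the bound at order $p-1$, I would apply It\^o's formula to $F(\|u^N(t)\|^2_{L^2_x}) = \|u^N(t)\|^{2p}_{L^2_x}$ using the scalar SDE \eqref{Eqn:dL2}. This produces: (i) a drift consisting of the nonpositive dissipative contribution $-2p\|u^N\|^{2(p-1)}_{L^2_x}\|u^N\|^2_{\dot H^1_x}$, to be discarded, plus the forcing $2p\|u^N\|^{2(p-1)}_{L^2_x}\|\phi^N\|^2_{\dot H^1_x}$; (ii) a martingale $M_t$; and (iii) a second-order It\^o correction proportional to $\|u^N\|^{2(p-2)}_{L^2_x}\sum_{|n|\leq N}n^2\phi_n^2(a_n^2+b_n^2)$. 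Using the elementary bound $\sum_{|n|\leq N}n^2\phi_n^2(a_n^2+b_n^2)\leq \|\phi^N\|^2_{\dot H^1_x}\|u^N\|^2_{L^2_x}$, the It\^o correction collapses into a constant multiple of $\|u^N\|^{2(p-1)}_{L^2_x}$.

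Next I would take the supremum over $[0,T]$, multiply by $\chi_{\tilde\Omega}$, and take expectation. Writing $Y := \mathbb{E}(\sup_{0\leq t\leq T}\|u^N(t)\|^{2p}_{L^2_x}\chi_{\tilde\Omega})$, the deterministic integrals are bounded by
\[
\mathbb{E}\Big(\int_0^T \|u^N\|^{2(p-1)}_{L^2_x}\chi_{\tilde\Omega}\,dt\Big) \leq T\,\mathbb{E}\Big(\sup_{[0,T]}\|u^N\|^{2(p-1)}_{L^2_x}\chi_{\tilde\Omega}\Big) \leq T\, C_{p-1,N},
\]
by the inductive hypothesis. For the martingale I would invoke the Burkholder–Davis–Gundy inequality, using
\[
\langle M\rangle_T \leq p^2\|\phi^N\|^2_{\dot H^1_x}\int_0^T \|u^N\|^{4p-2}_{L^2_x}\,dt \leq p^2\|\phi^N\|^2_{\dot H^1_x}\, T\,\Big(\sup_{[0,T]}\|u^N\|^{2p-1}_{L^2_x}\Big)^2,
\]
to obtain
\[
\mathbb{E}\Big(\sup_{[0,T]}|M_t|\chi_{\tilde\Omega}\Big) \leq C_p\|\phi^N\|_{\dot H^1_x}\,T^{1/2}\,\mathbb{E}\Big(\sup_{[0,T]}\|u^N\|^{2p-1}_{L^2_x}\chi_{\tilde\Omega}\Big) \leq C_p\|\phi^N\|_{\dot H^1_x}\,T^{1/2}\,Y^{(2p-1)/(2p)},
\]
where the last step uses Jensen's inequality together with $\chi_{\tilde\Omega}^{\alpha}=\chi_{\tilde\Omega}$ for $\alpha\in(0,1]$. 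Young's inequality with conjugate exponents $2p$ and $2p/(2p-1)$ bounds this in turn by $\tfrac12 Y + C_{p,N,T}$. Putting the pieces together,
\[
Y \leq \|u^N_0\|^{2p}_{L^2_x} + C_{p,N} T\, C_{p-1,N} + \tfrac12 Y + C_{p,N,T},
\]
and rearranging yields $Y\leq C_{p,N}$ as claimed.

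The main obstacle is justifying the It\^o expansion and the BDG step rigorously, since on $\tilde\Omega^c$ the solution $u^N$ is only guaranteed to exist up to the random time $T_{\omega,N} < T$. The standard remedy is a localization argument: replace $T$ by the stopping time $\sigma_R := T\wedge T_{\omega,N}\wedge \inf\{t : \|u^N(t)\|_{L^2_x}\geq R\}$, perform the calculation above for the stopped process (where all expectations are finite and the stopped martingale is genuinely a martingale so that BDG applies), and pass to the limit $R\to\infty$. The resulting estimate is uniform in $R$, and since $\sigma_R\nearrow T$ almost surely on $\tilde\Omega$, monotone convergence recovers the stated bound.
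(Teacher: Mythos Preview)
Your proposal is correct and follows essentially the same route as the paper: induction on $p$, It\^o's formula applied to $\|u^N\|_{L^2}^{2p}$ via \eqref{Eqn:dL2}, the same bound on the It\^o correction, and Burkholder--Davis--Gundy for the martingale term followed by a Young-type absorption of $Y$ into the left-hand side. The only notable difference is bookkeeping: the paper carries an auxiliary inductive claim, namely that $\int_0^{T\wedge T_{\omega,N}}\|u^N\|_{L^2}^{2(p-1)}\|u^N\|_{\dot H^1}^2\,dt<\infty$ almost surely, in order to justify discarding the nonpositive dissipative term, whereas your stopping-time localization $\sigma_R$ achieves the same end more cleanly and avoids that extra hypothesis.
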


\begin{proof}[Proof of Lemma \ref{Lemma:highmoments}:]

\noindent  Since $T\leq T_{\omega,N}$ inside $\tilde{\Omega}$, we have
\begin{align*}
\mathbb{E}\Big(\sup_{0\leq t\leq T}\|u^{N}(t)\|_{L^{2}_{x}}^{2p}\cdot\chi_{\tilde{\Omega}}\Big) \leq \mathbb{E}\Big(\sup_{0\leq t \leq T}\|u^{N}(t\wedge T_{\omega,N})\|_{L^{2}_{x}}^{2p}\Big).
\end{align*}
To justify \eqref{Eqn:highmombound}, it therefore suffices to prove that for each integer $p\geq 0$, we have
\begin{align}
\mathbb{E}\Big(\sup_{0\leq t \leq T}\|u^{N}(t\wedge T_{\omega,N})\|_{L^{2}_{x}}^{2p}\Big) \leq C_{p,N}.
\label{Eqn:LPgrowth}
\end{align}
In this proof, we will simultaneously establish the following inequality, which we claim holds true almost surely, for every integer $p\geq 0$.
\begin{align}
\int_{0}^{T\wedge T_{\omega,N}}\|u^{N}(t)\|_{L^{2}_{x}}^{2p}\|u^{N}(t)\|^{2}_{\dot{H}^{1}_{x}}dt \leq \tilde{C}_{p,N}(\omega) < \infty.
\label{Eqn:highmomcancel}
\end{align}
The inequality \eqref{Eqn:highmomcancel} will ensure the finiteness of non-positive terms which will then be dropped from our estimates.

We proceed to prove \eqref{Eqn:LPgrowth} and \eqref{Eqn:highmomcancel} by induction on integers $p\geq 0$. For $p=0$, \eqref{Eqn:LPgrowth} is trivial.  For
\eqref{Eqn:highmomcancel} with $p=0$, we can easily modify the justification of \eqref{Eqn:drop2} in subsection 3.1, and omit the details.

Now suppose that \eqref{Eqn:LPgrowth} and \eqref{Eqn:highmomcancel} both hold up to and including some $p-1\geq 0$.  From \eqref{Eqn:dL2} and the It\^{o} formula
\[
dY^{p}=pY^{p-1}dY + \frac{1}{2}p(p-1)Y^{p-2}(dY)^{2}
\]
\noindent we find
\begin{align*}
d\Big(\|u^{N}(t)\|_{L^{2}_{x}}^{2p}\Big) &= p\|u^{N}(t)\|_{L^{2}_{x}}^{2(p-1)}\Big[\Big(-2\|u^{N}(t)\|^{2}_{\dot{H}^{1}_{x}} + 2\|\phi^{N}\|^{2}_{\dot{H}^{1}_{x}}\Big)dt \\
&\ \ \ \ \ \ - \sum_{|n|\leq N,n\neq 0}n\phi_{n}\big[-a_{n}(t)dB_{n}^{2}(t) + b_{n}(t)dB_{n}^{1}(t)\big]\Big] \\
&\ \ \ \ \ \ + p(p-1)\|u^{N}(t)\|_{L^{2}_{x}}^{2(p-2)}\sum_{|n|\leq N}
n^{2}\phi_{n}^{2}(a_{n}^{2}+b_{n}^{2})dt.
\end{align*}
For each $n$, let
$$X_{n,t}:=\int_{0}^{t}\|u^{N}(t')\|_{L^{2}_{x}}^{2(p-1)}a_{n}(t')dB_{n}^{2}(t')$$ and $$Y_{n,t}:=\int_{0}^{t}\|u^{N}(t')\|_{L^{2}_{x}}^{2(p-1)}b_{n}(t')dB_{n}^{1}(t').$$
Then we have
\begin{align*}
\|u^{N}(t)\|_{L^{2}_{x}}^{2p} - \|u^{N}_{0}\|_{L^{2}_{x}}^{2p}
&= \int_{0}^{t}p\|u^{N}(t')\|_{L^{2}_{x}}^{2(p-1)}\Big(-2\|u^{N}(t')\|^{2}_{\dot{H}^{1}_{x}} + 2\|\phi^{N}\|^{2}_{\dot{H}^{1}_{x}}\Big)dt' \notag \\
&\ + p(p-1)\int_{0}^{t}\|u^{N}(t')\|_{L^{2}_{x}}^{2(p-2)}
\sum_{|n|\leq N}
n^{2}\phi_{n}^{2}(a_{n}^{2}+b_{n}^{2})(t')dt'
\notag \\
&\ + p\sum_{|n|\leq N,n\neq 0}n\phi_{n}
\big(X_{n,t} + Y_{n,t}\big),
\end{align*}
In particular, we have
\begin{align}
\|u^{N}(t\wedge T_{\omega,N})\|_{L^{2}_{x}}^{2p} &= \|u^{N}_{0}\|_{L^{2}_{x}}^{2p} + \int_{0}^{t\wedge T_{\omega,N}}p\|u^{N}(t')\|_{L^{2}_{x}}^{2(p-1)}\Big(-2\|u^{N}(t')\|^{2}_{\dot{H}^{1}_{x}} + 2\|\phi^{N}\|^{2}_{\dot{H}^{1}_{x}}\Big)dt' \notag \\
&\ + p(p-1)\int_{0}^{t\wedge T_{\omega,N}}\|u^{N}(t')\|_{L^{2}_{x}}^{2(p-2)}
\sum_{|n|\leq N}
n^{2}\phi_{n}^{2}(a_{n}^{2}+b_{n}^{2})(t')dt'
\notag \\
&\ + p\sum_{|n|\leq N,n\neq 0}n\phi_{n}\big(X_{n,t\wedge T_{\omega,N}} + Y_{n,t\wedge T_{\omega,N}}\big).
\label{Eqn:LPcomp}
\end{align}

\noindent Using \eqref{Eqn:highmomcancel} at the level $p-1$, we have that, almost surely, for each $t\in[0,T]$,
\begin{align}
\int_{0}^{t\wedge T_{\omega,N}}\|u^{N}(t')\|_{L^{2}_{x}}^{2(p-1)}\|u^{N}(t')\|^{2}_{\dot{H}^{1}_{x}}dt'
&\leq \int_{0}^{T\wedge T_{\omega,N}}\|u^{N}(t')\|_{L^{2}_{x}}^{2(p-1)}\|u^{N}(t')\|^{2}_{\dot{H}^{1}_{x}}dt'
\notag \\
&\leq \tilde{C}_{p-1,N}(\omega) < \infty.
\label{Eqn:candrop}
\end{align}
Given \eqref{Eqn:candrop}, we can almost surely drop the second term on the right-hand side of \eqref{Eqn:LPcomp}.  That is, almost surely,
\begin{align*}
\|u^{N}(t\wedge T_{\omega,N})\|_{L^{2}_{x}}^{2p} &\leq \|u^{N}_{0}\|_{L^{2}_{x}}^{2p} + 2p\|\phi^{N}\|^{2}_{H^{1}}\int_{0}^{t\wedge T_{\omega,N}}\|u^{N}(t')\|_{L^{2}_{x}}^{2(p-1)}dt'\\
&\ \ \ \ \ \ \ + p(p-1)\int_{0}^{t\wedge T_{\omega,N}}\|u^{N}(t')\|_{L^{2}_{x}}^{2(p-2)}
\sum_{|n|\leq N}
n^{2}\phi_{n}^{2}(a_{n}^{2}+b_{n}^{2})(t')dt' \\
&\ \ \ \ \ \ \ + p\sum_{|n|\leq N}n\phi_{n}\big(X_{n,t\wedge T_{\omega,N}} + Y_{n,t\wedge T_{\omega,N}}\big)
\\
&\leq \|u^{N}_{0}\|_{L^{2}_{x}}^{2p} + 2p\|\phi^{N}\|^{2}_{H^{1}}\int_{0}^{t\wedge T_{\omega,N}}\|u^{N}(t')\|_{L^{2}_{x}}^{2(p-1)}dt' \\
&\ \ \ \ \ \ \ + p(p-1)\|\phi^{N}\|^{2}_{H^{1}}\int_{0}^{t\wedge T_{\omega,N}}\|u^{N}(t')\|_{L^{2}_{x}}^{2(p-1)}dt' \\
&\ \ \ \ \ \ \ + p\sum_{|n|\leq N}n\phi_{n}\big(X_{n,t\wedge T_{\omega,N}} + Y_{n,t\wedge T_{\omega,N}}\big)
\end{align*}
This gives
\begin{align}
\sup_{0\leq t \leq T}\|u^{N}(t\wedge T_{\omega,N})\|_{L^{2}_{x}}^{2p} &\leq \|u_{0}\|_{L^{2}_{x}}^{2p} + C(p,T) \|\phi^{N}\|_{H^{1}}^{2}\Big(\sup_{0\leq t \leq T}\|u^{N}(t\wedge T_{\omega,N})\|_{L^{2}_{x}}^{2(p-1)}\Big) \notag \\
& \ \ \ \ \ \  + \sum_{|n|\leq N,n\neq 0}|n\phi_{n}|\Big(\sup_{0\leq t \leq T}|X_{n,t\wedge T_{\omega,N}}| + \sup_{0\leq t \leq T}|Y_{n,t\wedge T_{\omega,N}}|\Big).
\label{Eqn:preburkbound}
\end{align}

\noindent  The stochastic integrals $X_{n,t\wedge T_{\omega,N}}$ and $Y_{n,t\wedge T_{\omega,N}}$ are continuous martingales.  Letting $ [X_{n}]_{t} = \int_{0}^{t}\|u^{N}(t')\|_{L^{2}_{x}}^{4(p-1)}(a_{n}(t'))^{2}dt'$
denote the quadratic variation of $X_{n,t}$, we
apply Burkholder's inequality to find, for each $n$, that
\begin{align*}
\mathbb{E}\Big(\sup_{0\leq t \leq T}|X_{n,t\wedge T_{\omega,N}}|\Big)
&\leq 3\mathbb{E}\Big(\big([X_{n}]_{T\wedge T_{\omega,N}}\big)^{\frac{1}{2}}\Big)  \\
&= 3\mathbb{E}\Big(\big(\int_{0}^{T\wedge T_{\omega,N}}\|u^{N}(t)\|_{L^{2}_{x}}^{4(p-1)}(a_{n}(t))^{2}dt\big)^{\frac{1}{2}}\Big)  \\
&\leq 3\mathbb{E}\Big(\big(\sup_{0\leq t \leq T\wedge T_{\omega,N}}\|u^{N}(t)\|_{L^{2}_{x}}^{4(p-1)}\big)^{\frac{1}{2}}\big(\int_{0}^{T\wedge T_{\omega,N}}(a_{n}(t))^{2}dt\big)^{\frac{1}{2}}\Big)  \\
&= 3\mathbb{E}\Big(\big(\sup_{0\leq t \leq T\wedge T_{\omega,N}}\|u^{N}(t)\|_{L^{2}_{x}}^{2(p-1)}\big)\big(\int_{0}^{T\wedge T_{\omega,N}}(a_{n}(t))^{2}dt\big)^{\frac{1}{2}}\Big)  \\
&\leq 3\mathbb{E}\Big(\big(\sup_{0\leq t \leq T\wedge T_{\omega,N}}\|u^{N}(t)\|_{L^{2}_{x}}^{2(p-1)}\big)
\\
&\ \ \ \ \ \ \ \ \ \ \ \ \ \big(\int_{0}^{T\wedge T_{\omega,N}}((a_{n}(t))^{2} + (b_{n}(t))^{2})dt\big)^{\frac{1}{2}}\Big)
\end{align*}
and the same inequality holds with $Y_{n,t}$
on the left-hand side.  This gives
\begin{align*}
\mathbb{E}\Bigg[&\sum_{|n|\leq N}|n\phi_{n}|\sup_{0\leq t \leq T}|X_{n,t}| \Bigg]
= \sum_{|n|\leq N}|n\phi_{n}|\mathbb{E}\Big(\sup_{0\leq t \leq T}|X_{n,t}| \Big)   \\
&\leq 3\sum_{|n|\leq N}|n\phi_{n}|\mathbb{E}\Bigg[\big(\sup_{0\leq t \leq T\wedge T_{\omega,N}}\|u^{N}(t)\|_{L^{2}_{x}}^{2(p-1)}\big) \big(\int_{0}^{T\wedge T_{\omega,N}}((a_{n}(t'))^{2} + (b_{n}(t'))^{2})dt'\big)^{\frac{1}{2}}\Bigg] \\
&=  3\mathbb{E}\Bigg[\sum_{|n|\leq N}|n\phi_{n}|\big(\sup_{0\leq t \leq T\wedge T_{\omega,N}}\|u^{N}(t)\|_{L^{2}_{x}}^{2(p-1)}\big) \big(\int_{0}^{T\wedge T_{\omega,N}}((a_{n}(t'))^{2} + (b_{n}(t'))^{2})dt'\big)^{\frac{1}{2}}\Bigg]
\\
&\leq
3\|\phi^{N}\|_{\dot{H}^{1}}\mathbb{E}\Bigg[\big(\sup_{0\leq t \leq T\wedge T_{\omega,N}}\|u^{N}(t)\|_{L^{2}_{x}}^{2(p-1)}\big)
\int_{0}^{T\wedge T_{\omega,N}}\|u^{N}(t')\|_{L^{2}_{x}}dt'\Bigg] \\ &\leq
3T\|\phi^{N}\|_{\dot{H}^{1}}\mathbb{E}\Bigg[\big(\sup_{0\leq t \leq T\wedge T_{\omega,N}}\|u^{N}(t)\|_{L^{2}_{x}}^{2(p-1)}\big)
\big(\sup_{0\leq t \leq T\wedge T_{\omega,N}}\|u^{N}(t)\|_{L^{2}_{x}}\big)\Bigg],
\end{align*}
where we have applied Cauchy-Schwarz in the second last line.  We can rearrange the supremum to obtain,
\begin{align}
\mathbb{E}\Bigg[&\sum_{|n|\leq N}|n\phi_{n}|\sup_{0\leq t \leq T}|X_{n,t}| \Bigg] \leq
3T\|\phi^{N}\|_{\dot{H}^{1}}\mathbb{E}\Big(\sup_{0\leq t \leq T\wedge T_{\omega,N}}\|u^{N}(t)\|_{L^{2}_{x}}^{2p-1}
\Big)
\notag  \\
&=
3T\|\phi^{N}\|_{\dot{H}^{1}}\mathbb{E}\Big(\sup_{0\leq t \leq T}\|u^{N}(t\wedge T_{\omega,N})\|_{L^{2}_{x}}^{2p-1}
\Big)
\notag  \\
&=
3T\|\phi^{N}\|_{\dot{H}^{1}}\mathbb{E}\Bigg[\big(\sup_{0\leq t \leq T}\|u^{N}(t\wedge T_{\omega,N})\|_{L^{2}_{x}}^{2(p-1)}\big)
^{\frac{1}{2}}
\big(\sup_{0\leq t \leq T}\|u^{N}(t\wedge T_{\omega,N})\|_{L^{2}_{x}}^{2p}\big)^{\frac{1}{2}}
\Bigg]
\notag  \\
&\leq
3T\|\phi^{N}\|_{\dot{H}^{1}}\Bigg[\mathbb{E}\Big(\sup_{0\leq t \leq T}\|u^{N}(t\wedge T_{\omega,N})\|_{L^{2}_{x}}^{2(p-1)}\Big)\Bigg]
^{\frac{1}{2}}
\Bigg[\mathbb{E}\big(\sup_{0\leq t \leq T\wedge T_{\omega,N}}\|u^{N}(t)\|_{L^{2}_{x}}^{2p}\big)
\Bigg]^{\frac{1}{2}}
\notag  \\
&\leq
C(T)\|\phi^{N}\|_{\dot{H}^{1}}^{2}\mathbb{E}\Big(\sup_{0\leq t \leq T}\|u^{N}(t\wedge T_{\omega,N})\|_{L^{2}_{x}}^{2(p-1)}\Big) + \frac{1}{2}\mathbb{E}\Big(\sup_{0\leq t \leq T}\|u^{N}(t\wedge T_{\omega,N})\|_{L^{2}_{x}}^{2p}\Big),
\label{Eqn:burkbound}
\end{align}
where we have applied Cauchy-Schwarz in the second last line.  Combining \eqref{Eqn:preburkbound} with \eqref{Eqn:burkbound} we find
\begin{align*}
\mathbb{E}\Big(\sup_{0\leq t \leq T}\|u^{N}(t\wedge T_{\omega,N})\|_{L^{2}_{x}}^{2p}\Big)
&\leq \|u_{0}^{N}\|_{L^{2}_{x}}^{2p} + C(p,T) \|\phi^{N}\|_{H^{1}}^{2}\mathbb{E}\Big(\sup_{0\leq t \leq T}\|u^{N}(t\wedge T_{\omega,N})\|_{L^{2}_{x}}^{2(p-1)}\Big)  \\ &\
\ \ \ \ + \frac{1}{2}\mathbb{E}\Big(\sup_{0\leq t \leq T}\|u^{N}(t\wedge T_{\omega,N})\|_{L^{2}_{x}}^{2p}\Big).
\end{align*}
Upon rearrangement, we conclude that
\begin{align*}
\mathbb{E}\Big(\sup_{0\leq t \leq T}\|u^{N}(t\wedge T_{\omega,N})\|_{L^{2}_{x}}^{2p}\Big)
&\leq 2\|u_{0}^{N}\|_{L^{2}_{x}}^{2p} + C(p,T) \|\phi^{N}\|_{H^{1}}^{2}\mathbb{E}\Big(\sup_{0\leq t \leq T}\|u^{N}(t\wedge T_{\omega,N})\|_{L^{2}_{x}}^{2(p-1)}\Big).
\end{align*}
Applying our hypothesis, that \eqref{Eqn:LPgrowth} holds up to $p-1$,
\begin{align*}
\mathbb{E}\Big(\sup_{0\leq t \leq T}\|u^{N}(t\wedge T_{\omega,N})\|_{L^{2}_{x}}^{2p}\Big)
&\leq
2\|u_{0}^{N}\|_{L^{2}_{x}}^{2} + C(p,T) \|\phi^{N}\|_{H^{1}}^{2}C_{p-1,N} \\
&\leq C_{p,N},
\end{align*}
by taking $C_{p,N}=C(p,T,\|u^{N}_{0}\|_{L^{2}_{x}},\|\phi^{N}\|_{H^{1}})$
sufficiently large.  Thus \eqref{Eqn:LPgrowth} holds at the level $p$.  In particular, since $p\geq 1$, we know that \eqref{Eqn:LPgrowth} holds at the level $p=1$.

It remains to establish \eqref{Eqn:highmomcancel} at the level $p$.  We find
\begin{align*}
\int_{0}^{T\wedge T_{\omega_{K}}}\|u^{N}(t)\|_{L^{2}_{x}}^{2p}\|u^{N}(t)\|^{2}_{\dot{H}^{1}_{x}}dt  &\leq  \Big(\sup_{0\leq t\leq T\wedge T_{\omega,N}}\|u^{N}(t)\|_{L^{2}_{x}}^{2}\Big)\int_{0}^{T\wedge T_{\omega,N}}\|u^{N}(t)\|_{L^{2}_{x}}^{2(p-1)}\|u^{N}(t)\|^{2}_{\dot{H}^{1}_{x}}dt  \\
&=  \Big(\sup_{0\leq t\leq T}\|u^{N}(t\wedge T_{\omega,N})\|_{L^{2}_{x}}^{2}\Big)\int_{0}^{T\wedge T_{\omega,N}}\|u^{N}(t)\|_{L^{2}_{x}}^{2(p-1)}\|u^{N}(t)\|^{2}_{\dot{H}^{1}_{x}}dt  \\
&\leq C(\omega) < \infty,
\end{align*}

\noindent almost surely, by \eqref{Eqn:highmomcancel} at the level $p-1$, and \eqref{Eqn:LPgrowth} holds at the level $p=1$.  Thus \eqref{Eqn:LPgrowth} and \eqref{Eqn:highmomcancel} both hold for each $p\geq 0$.  In particular, \eqref{Eqn:highmombound} holds for each $p\geq 0$ and the proof of Lemma \ref{Lemma:highmoments} is complete.

\end{proof}

\begin{proof}[Proof of Proposition \ref{Prop:finiteGWP}]
Let $T>0$.  We first claim that for all $\sigma >0$,
$\exists \,\Omega_{\sigma}\subset \Omega$ such that
$P(\Omega_{\sigma}^{c})<\sigma$ and for all $\omega \in \Omega_{\sigma}$ a unique solution of \eqref{Eqn:SDKDV-trunc} exists for $t\in [0,T]$.

For any $0<\delta <T$, we split $[0,T]$ into $M\sim \frac{T}{\delta}$ subintervals.
Let
$$ \Omega_{0} = \cap_{k=1}^{M}\Big\{\|\chi_{[(k-1)\delta,\delta]}
\int_{(k-1)\delta}^{t}S(t-t')\phi^{N} \partial_{x} dW(t')\|
_{X^{0,\frac{1}{2}-\varepsilon}} \leq L
\Big\}. $$
By Chebyshev and Proposition \ref{Prop:EstofStochConv} we have
\begin{align*}
P\big(\Omega_{0}^{c}\big) &\leq \sum_{k=1}^{M}
P\Big(\|\chi_{[(k-1)\delta,k\delta]}
\int_{(k-1)\delta}^{t}S(t-t')\phi^{N} \partial_{x} dW(t')\|
_{X^{0,\frac{1}{2}-\varepsilon}} > L
\Big)  \\
&\leq
\sum_{k=1}^{M}
\frac{\mathbb{E}\Big(\|\chi_{[(k-1)\delta,k\delta]}
\int_{(k-1)\delta}^{t}S(t-t')\phi^{N} \partial_{x} dW(t')\|^{2}
_{X^{0,\frac{1}{2}-\varepsilon}}
\Big)}{L^{2}}  \\
&\sim
\sum_{k=1}^{M}
\frac{\delta\|\phi^{N}\|_{H^{1-2\varepsilon}}^{2}}{L^{2}}  \\
&\leq
\frac{M \delta\|\phi^{N}\|_{H^{1-2\varepsilon}}^{2}}{L^{2}}  \\
&\lesssim
\frac{T\|\phi^{N}\|_{H^{1-2\varepsilon}}^{2}}{L^{2}}
< \frac{\sigma}{2},
\end{align*}
for $L(\sigma,T,\|\phi^{N}\|_{H^{1-2\varepsilon}})$ sufficiently large, independent of $\delta$.  Let $B,R>0$ be positive constants to be determined later on.  Depending on our choice of $R$, we will select $B=B(R,\|u_{0}^{N}\|_{L^{2}})$ sufficiently large such that $BR \geq \|u_{0}^{N}\|_{L^{2}}$.  Then if
$\omega \in \Omega_{0}$, we have
$$ \|u_{0}^{N}\|_{L^{2}} \leq BR \ \ \text{and}\ \
\|\chi_{[0,\delta]}
\int_{0}^{t}S(t-t')\phi^{N} \partial_{x} dW(t')\|
_{X^{0,\frac{1}{2}-\varepsilon}} \leq L. $$
By Theorem \ref{Thm:Contraction} there exists a unique solution $u^{N}(t)$ to \eqref{Eqn:SDKDV-trunc} for $t\in[0,\delta]$, where we can take
\begin{align}
T_{\omega}\geq \delta \geq \frac{T_{\omega}}{2}\gtrsim \frac{1}{(BR+L)^{\frac{2}{\varepsilon}+}}.
\label{Eqn:deltasize}
\end{align}
Now for each $t\in[0,\delta]$, $f(t):=\chi_{\Omega_{0}}\cdot\|u^{N}(t)\|_{L^{2}_{x}}$ is $\mathcal{F}_{T}$-measurable.  In particular, the set $$\Omega_{1}=\{\omega\in \Omega_{0}:
\|u^{N}(\delta)\|_{L^{2}_{x}}\leq BR\} = \Omega_{0}\cap \{f(t)\leq BR\} \in \mathcal{F}_{T}.$$
Moreover, for $\omega \in \Omega_{1}$, the same arguments extend the solution $u^{N}(t)$ to $[\delta,2\delta]$.  Repeating this procedure, on the set
$$\Omega_{n}=\{\omega\in \Omega_{n-1}:
\|u^{N}(n\delta)\|_{L^{2}_{x}}\leq BR\}\in \mathcal{F}_{T},$$
the solution $u^{N}(t)$ exists for $t\in [0,n\delta]$.
Taking $m\sim \frac{T}{\delta}$, for each $\omega\in\Omega_{m}$, the solution $u^{N}(t)$ to \eqref{Eqn:SDKDV-trunc} exists for $t\in [0,T]$.

Next observe that, for each $1\leq k\leq m$,
$k\delta-(k-1)\delta =\delta \leq T_{\omega,N}$, and we can apply Lemma \ref{Lemma:highmoments} on the time interval $[(k-1)\delta,k\delta]$, with $\tilde{\Omega}=\Omega_{k}$, to conclude that
\begin{align}
\mathbb{E}\Big(\sup_{(k-1)\delta \leq t\leq k\delta }\|u^{N}(t)\|_{L^{2}_{x}}^{2p}\cdot
\chi_{\Omega_{k}}\Big) \leq C_{p,N},
\label{Eqn:highmombound-applied}
\end{align}
where $C_{p,N}=C(T,\|u^{N}_{0}\|_{L^{2}},\|\phi^{N}\|_{H^{1}})$ is sufficiently large.  Then by Chebyshev and \eqref{Eqn:highmombound-applied}, we have that for each $1\leq k\leq m$,
\begin{align}
P\Big(\Omega_{k-1}\cap \Omega_{k}^{c}\Big) &= P\Big(\Omega_{k-1}\cap \{\|u(k\delta)\|_{L^{2}_{x}}>BR\}\Big) \notag \\ &\leq
\frac{\mathbb{E}\Big(\|u(k\delta)\|^{2p}_{L^{2}_{x}}\cdot \chi_{\Omega_{k-1}}\Big)}{(BR)^{p}} \notag \\
&\leq
\frac{C_{p,N}}{(BR)^{2p}} \notag \\
&\leq \frac{1}{B^{2p}},
\label{Eqn:Bbound}
\end{align}
\noindent by taking $R=(C^{p,N})^{\frac{1}{2p}}$, where $p$ has yet to be selected.

Now we find, from the nesting $\Omega_{0} \supset
\Omega_{1}\supset \cdots \Omega_{m}$, and \eqref{Eqn:Bbound}, that
\begin{align}
P(\Omega_{m}^{c}) &\leq  P(\Omega_{0}^{c}) + \sum_{k=1}^{m}P(\Omega_{k-1}\cap \Omega_{k}^{c}) \notag \\
&\leq  \frac{\sigma}{2} + \sum_{k=1}^{m}\frac{1}{B^{2p}} \notag \\
&=  \frac{\sigma}{2} + \frac{m}{B^{2p}}  \notag \\
&\leq  \frac{\sigma}{2} + 2\frac{T}{\delta}\frac{1}{B^{2p}} \notag  \\
&=  \frac{\sigma}{2} + 2\frac{T}{C}\frac{(BR+L)^{\frac{1}{2\varepsilon}+}}{B^{2p}} \ \ \ \text{by}\ \ \eqref{Eqn:deltasize}, \notag \\
&\leq  \sigma,
\label{Eqn:msize}
\end{align}
by selecting $p=p(\varepsilon)$ sufficiently large, and $B=B(R,L,T,\sigma)$ sufficiently large.

Given $T>0$, $\sigma >0$, let $\Omega_{\sigma}:= \Omega_{m}$, with $m$ as above.  With \eqref{Eqn:msize} and the arguments above, we have established the existence of set $\Omega_{\sigma}\in \mathcal{F}_{T}$ with $P(\Omega_{\sigma}^{c})<\sigma$, such that for all $\omega \in \Omega_{\sigma}$, there is a unique solution $u^{N}(t)$ to \eqref{Eqn:SDKDV-trunc} on $[0,T]$.

Taking $\rho=\cup_{n=1}^{\infty}\Omega_{\frac{1}{n}}$, we have $P(\rho^{c})\leq P(\Omega_{\frac{1}{n}}^{c}) < \frac{1}{n}$ for every $n$, and therefore $P(\rho)=1$.  Furthermore, if $\omega\in \rho$,
then $\omega\in \Omega_{\frac{1}{n}}$ for some $n\in \mathbb{N}$, and there is a unique solution $u^{N}(t)$ to \eqref{Eqn:SDKDV-trunc} on $[0,T]$.
The proof of Proposition \ref{Prop:finiteGWP} is complete.

\end{proof}

\end{document}